\newtheorem{thm}{Theorem}[section]
\newtheorem{lem}[thm]{Lemma}
\newtheorem{prop}[thm]{Proposition}
\theoremstyle{definition}
\newtheorem{defn}[thm]{Definition}
\theoremstyle{remark}
\newtheorem{rem}[thm]{\textbf{Remark}}
\newtheorem{rems}[thm]{\textbf{Remarks}}
      \def\@makefnmark{%
         \leavevmode
            \raise.9ex\hbox{\check@mathfonts
                \fontsize\sf@size\z@\normalfont%
                            \@thefnmark}%
       }
\newcommand{\cal}{\mathcal}
\newcommand{\p}{\mathbb{P}}
\renewcommand{\q}{\mathbb{Q}}
\newcommand{\R}{\mathbb{R}^{n}}
\newcommand{\D}{\textrm{div}}
\newcommand{\dd}{\textrm{d}}
\begin{document}

\title[]{On estimates for the Stokes flow in a space of bounded functions}
\author[]{Ken Abe}
\date{}
\address[K.ABE]{Department of Mathematics, Faculty of Science, Kyoto University, Kitashirakawa Oiwake-cho, Sakyo-ku, Kyoto 606-8502, Japan}
\email{kabe@math.kyoto-u.ac.jp}
\subjclass[2010]{35Q35, 35K90}
\keywords{Stokes semigroup, Helmholtz projection, composition operator, bounded function spaces, interpolation}
\date{}

\date{}

\maketitle

\begin{abstract}
In this paper, we study regularizing effects of the composition operator $S(t)\mathbb{P}\partial$ for the Stokes semigroup $S(t)$ and the Helmholtz projection $\mathbb{P}$ in a space of bounded functions. We establish new a priori  $L^{\infty}$-estimates of the operator $S(t)\p\partial$ for a certain class of domains including bounded and exterior domains. They imply unique existence of mild solutions of the Navier-Stokes equations in a space of bounded functions.
\end{abstract}

\vspace{10pt}

\section{Introduction and main results}

\vspace{10pt}

We consider the Stokes equations in a domain $\Omega\subset \R$, $n\geq 2$:\\
\begin{align*}
\partial_{t}v-\Delta v+\nabla q&=0\quad \textrm{in}\ \Omega\times (0,T),  \tag{1.1} \\ 
\D\ v&=0\quad \textrm{in}\ \Omega\times (0,T),  \tag{1.2} \\
v&=0\quad \textrm{on}\ \partial\Omega\times (0,T),  \tag{1.3} \\
v&=v_0\quad\hspace{-3.5pt} \textrm{on}\ \Omega\times \{t=0\}.  \tag{1.4} 
\end{align*}
Let $S(t):v_0\longmapsto v(\cdot,t)$ denote the Stokes semigroup and $\p$ denote the Helmholtz projection. In the sequel, $\partial=\partial_{j}$, $j\in \{1,\cdots,n\}$, indiscriminately denotes the spatial derivatives. The goal of this paper is to establish a new a priori $L^{\infty}$-estimate for the composition operator $S(t)\p\partial$. To state a result, let $C^{\infty}_{c}(\Omega)$ denote the space of all smooth functions with compact support in $\Omega$. Let $W^{1,p}(\Omega)$ denote the space of all functions $f\in L^{p}(\Omega)$ such that $\nabla f\in L^{p}(\Omega)$ for $p\in [1,\infty]$. Let $C^{1}_{0}(\Omega)$ denote the closure of $C^{\infty}_{c}(\Omega)$ in $W^{1,\infty}(\Omega)$. One of our main results is the following:    

\vspace{5pt}

\begin{thm}
Let $\Omega$ be a bounded or an exterior domain in $\R$, $n\geq 2$, with $C^{3}$-boundary. For $\alpha\in (0,1)$ and $T_0>0$, there exists a constant $C$ such that 
\begin{equation*}
\big\|S(t)\p\partial f \big\|_{L^{\infty}(\Omega)}\leq 
\frac{C}{t^{\frac{1-\alpha}{2}}}
\big\|f\big\|_{L^{\infty}(\Omega)}^{1-\alpha}
\big\|\nabla f\big\|_{L^{\infty}(\Omega)}^{\alpha}  \tag{1.5}  
\end{equation*}
holds for $f\in C^{1}_{0}\cap W^{1,2}(\Omega)$ and $t\leq T_0$. When $\Omega$ is bounded, (1.5) holds for $T_{0}=\infty$. 
\end{thm}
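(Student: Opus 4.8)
\smallskip
\noindent\emph{Proof plan.} Estimate (1.5) is an interpolation inequality, and the cleanest route is to strip off the nonlinear dependence on $f$ by an elementary Gagliardo--Nirenberg inequality and reduce to a \emph{linear} smoothing estimate in a H\"older seminorm. For $f\in C^{1}_{0}(\Omega)$, since a bounded or exterior $C^{3}$ domain is uniformly Lipschitz (hence quasiconvex), one has $|f(x)-f(y)|\le C\|\nabla f\|_{L^{\infty}}|x-y|$; taking the minimum of this with $2\|f\|_{L^{\infty}}$ and optimising in $|x-y|$ gives
\begin{equation*}
[f]_{C^{\alpha}(\overline{\Omega})}:=\sup_{x\neq y}\frac{|f(x)-f(y)|}{|x-y|^{\alpha}}\le C\,\|f\|_{L^{\infty}(\Omega)}^{1-\alpha}\,\|\nabla f\|_{L^{\infty}(\Omega)}^{\alpha}.
\end{equation*}
Hence (1.5) follows once we show, for each fixed $\alpha\in(0,1)$ and $t\le T_{0}$ (all $t>0$ when $\Omega$ is bounded),
\begin{equation*}
\big\|S(t)\mathbb{P}\partial f\big\|_{L^{\infty}(\Omega)}\le \frac{C}{t^{\frac{1-\alpha}{2}}}\,[f]_{C^{\alpha}(\overline{\Omega})},\qquad f\in C^{1}_{0}\cap W^{1,2}(\Omega).
\end{equation*}

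The only input is the already established $L^{\infty}$ bound for the composition operator, $\|S(t)\mathbb{P}\partial g\|_{L^{\infty}}\le Ct^{-1/2}\|g\|_{L^{\infty}}$, valid for bounded domains (Abe--Giga) and exterior domains (Abe): this is the case $\alpha=0$, it extends $S(t)\mathbb{P}\partial$ to all of $L^{\infty}(\Omega)$, and the extended operator kills constants because $\mathbb{P}\partial(\mathrm{const})=0$. Fix $x\in\Omega$ and cut-offs $\eta_{k}$ ($k\ge 0$) with $\eta_{k}\equiv 1$ on $B_{2^{k}\sqrt{t}}(x)$ and $\mathrm{supp}\,\eta_{k}\subset B_{2^{k+1}\sqrt{t}}(x)$. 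Since $S(t)\mathbb{P}\partial$ annihilates the constant $f(x)$ and $\eta_{0}+\sum_{k\ge 0}(\eta_{k+1}-\eta_{k})\equiv 1$,
\begin{equation*}
S(t)\mathbb{P}\partial f\,(x)=S(t)\mathbb{P}\partial\big[(f-f(x))\eta_{0}\big](x)+\sum_{k\ge 0}S(t)\mathbb{P}\partial\big[(f-f(x))(\eta_{k+1}-\eta_{k})\big](x).
\end{equation*}
The first term is at most $Ct^{-1/2}\|(f-f(x))\eta_{0}\|_{L^{\infty}}\le Ct^{-1/2}[f]_{C^{\alpha}}(2\sqrt{t})^{\alpha}=Ct^{-(1-\alpha)/2}[f]_{C^{\alpha}}$ by the $\alpha=0$ bound. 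For the $k$-th term the argument is supported at distance $\ge 2^{k}\sqrt{t}$ from $x$, so one uses the spatial decay of the kernel $N_{t}(x,y)$ of $S(t)\mathbb{P}\partial$, which is of Oseen type, $|N_{t}(x,y)|\le Ct^{-(n+1)/2}(1+|x-y|/\sqrt{t})^{-(n+1)}$ -- polynomial, \emph{not} Gaussian, because of the Riesz transforms concealed in $\mathbb{P}$ -- equivalently a localised form of the $\alpha=0$ bound with an extra gain $2^{-k}$. Together with $|f(y)-f(x)|\le[f]_{C^{\alpha}}|y-x|^{\alpha}\le C[f]_{C^{\alpha}}(2^{k}\sqrt{t})^{\alpha}$ on the support, the $k$-th term is at most $C\,2^{-k}\,t^{-1/2}[f]_{C^{\alpha}}(2^{k}\sqrt{t})^{\alpha}=C\,2^{-k(1-\alpha)}\,t^{-(1-\alpha)/2}[f]_{C^{\alpha}}$. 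Summing over $k\ge 0$ converges \emph{precisely because $\alpha<1$} (with $C=C(\alpha)\to\infty$ as $\alpha\uparrow 1$, which is why the endpoint $\alpha=1$ must be excluded), and gives the smoothing estimate.

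I expect the main obstacle to be the decay bound for $N_{t}(x,y)$ -- equivalently the localised $\alpha=0$ estimate with the gain $2^{-k}$ -- uniformly up to a curved $C^{3}$ boundary. Since $\mathbb{P}$ is unbounded on $L^{\infty}$ and the $L^{\infty}$ Stokes theory provides no $W^{2,\infty}$ gain, one cannot commute $\partial$ past $\mathbb{P}$ and $S(t)$ near $\partial\Omega$; instead one must reopen the localisation/blow-up analysis that underlies the $\alpha=0$ estimate and extract from it the quantitative spatial decay, exploiting that $f=\nabla f=0$ on $\partial\Omega$ (since $f\in C^{1}_{0}$) to suppress the boundary terms that would otherwise arise in the relevant integrations by parts. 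The last point is the time range: for bounded domains the spectral gap of the Stokes operator absorbs the large-time contribution and yields $T_{0}=\infty$, whereas for exterior domains the non-compactness and slow spatial decay of the flow only permit $t\le T_{0}$.
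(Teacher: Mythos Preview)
Your first step---reducing (1.5) to the linear smoothing estimate $\|S(t)\mathbb{P}\partial f\|_{L^{\infty}}\le Ct^{-(1-\alpha)/2}[f]^{(\alpha)}_{\Omega}$ via the elementary interpolation $[f]^{(\alpha)}_{\Omega}\le C\|f\|_{\infty}^{1-\alpha}\|\nabla f\|_{\infty}^{\alpha}$---is exactly what the paper does (Proposition~6.3 and estimate~(1.7)). The divergence is in how one proves the H\"older smoothing estimate, and here your argument has a genuine gap.

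The two ingredients you take as input are not available for bounded or exterior domains. First, the $\alpha=0$ bound $\|S(t)\mathbb{P}\partial g\|_{L^{\infty}}\le Ct^{-1/2}\|g\|_{L^{\infty}}$ is \emph{not} established in Abe--Giga; those papers prove analyticity of $S(t)$ on $C_{0,\sigma}$, and the present paper stresses explicitly that this does not yield the composition-operator estimate because $\mathbb{P}$ is unbounded on $L^{\infty}$. The $\alpha=0$ case is known only for the whole space and the half space, where the Oseen kernel is explicit. Second---and you concede this as the ``main obstacle''---the pointwise Oseen-type decay $|N_t(x,y)|\le Ct^{-(n+1)/2}(1+|x-y|/\sqrt t)^{-(n+1)}$ for the kernel of $S(t)\mathbb{P}\partial$ is likewise unavailable on curved domains: there is no explicit formula, and the nonlocal boundary contribution hidden in $\mathbb{P}$ obstructs any direct parametrix argument. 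Your dyadic decomposition needs both: the near term uses the global $\alpha=0$ bound, and the tail terms need the off-diagonal kernel decay (which is strictly stronger than the global bound). Saying one must ``reopen the localisation/blow-up analysis'' to get them is not a proof; it is a restatement of the problem.

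The paper bypasses both issues by proving (1.7) directly for $\alpha\in(0,1)$ via a blow-up argument (Proposition~6.2). Assuming (1.7) fails, one normalises and rescales to get a sequence $(u_m,p_m)$ on expanding domains $\Omega_m$ with initial data $\mathbb{P}_{\Omega_m}\partial g_m$ and $[g_m]^{(\alpha)}_{\Omega_m}\to 0$. Compactness comes from local H\"older estimates for the Stokes system (Lemma~4.3). The substitute for your kernel decay is a \emph{scale-invariant} H\"older-type estimate for $\mathbb{Q}_\Omega\partial f$ (Lemma~3.3, estimate~(1.12)), which forces the rescaled initial data to vanish in the limit; obtaining this estimate is precisely why the paper introduces the notion of \emph{strongly admissible} domain (Definition~2.3) and verifies it for bounded and exterior $C^3$ domains (Theorems~2.9 and~2.11). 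A uniqueness theorem in the half space for solutions possibly unbounded at $t=0$ (Theorem~5.1) then kills the limit and yields the contradiction. Your treatment of the time range is correct: $T_0=\infty$ for bounded domains follows from the spectral gap, exactly as in the paper.
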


\vspace{5pt}

The composition operator $S(t)\p\partial$ as well as the Stokes semigroup $S(t)$ plays a fundamental role for studying the nonlinear Navier-Stokes equations. It is well known that $S(t)\p\partial$ acts as a bounded operator on $L^{p}$ ($1<p<\infty$) and satisfies the estimate of the form
\begin{equation*}
\big\|S(t)\p\partial f\big\|_{L^{p}(\Omega)}\leq \frac{C_{p}}{t^{\frac{1}{2}}}\big\| f \big\|_{L^{p}(\Omega)}, \tag{1.6}
\end{equation*}
for $f\in W^{1,p}(\Omega)$ and $t\leq T_0$. Since the Helmholtz projection $\p$ acts as a bounded operator on $L^{p}$, the estimate (1.6)  follows from the analyticity of the Stokes semigroup on $L^{p}$ \cite{Sl77}, \cite{G81}. Recently, analyticity of the Stokes semigroup on $C_{0,\sigma}(\Omega)$ has been proved in \cite{AG1} (\cite{AG2}, \cite{AGH}), where $C_{0,\sigma}(\Omega)$ is the $L^{\infty}$-closure of $C_{c,\sigma}^{\infty}(\Omega)$, the space of all smooth solenoidal vector fields with compact support in $\Omega$. Although the Stokes semigroup is analytic on $C_{0,\sigma}$, the $L^{\infty}$-estimate (1.5) does not follow from the analyticity of the semigroup since the projection $\p$ is not bounded on $L^{\infty}$.

The estimate (1.5) has an application for the Navier-Stokes equations. So far, $L^{\infty}$-type results of the Navier-Stokes equations were established only for the whole space \cite{GIM} (\cite{GMSa}) and a half space \cite{Sl03}, \cite{BJ} for which explicit solution formulas of the Stokes semigroup are available. The difficulties lay on the $L^{\infty}$-estimate of the composition operator $S(t)\mathbb{P}\partial$ as well as the analyticity of the semigroup. Since $C_{0}^{1}$ is the $W^{1,\infty}$-closure of $C^{\infty}_{c}$, the estimate (1.5) yields a unique extension $\overline{S(t)\mathbb{P}\partial}$ acting as a bounded operator from $C^{1}_{0}$ to $C_{0,\sigma}$. (Note that the extension $\overline{S(t)\mathbb{P}\partial}$ is not expressed by the individual operators on $L^{\infty}$). Recently, the estimate (1.5) applies to construct mild solutions of the Navier-Stokes equations on $C_{0,\sigma}$ \cite{A3}.\\

In the sequel, we establish the a priori estimate for 
\begin{equation*}
N(v,q)(x,t)=\bigl|v(x,t)\bigr|+t^{\frac{1}{2}}\bigl|\nabla v(x,t)\bigr|+t\bigl|\nabla^{2}v(x,t)\bigr|+t\bigl|\partial_{t}v(x,t)\bigr|+t\bigl|\nabla q(x,t)\bigr|
\end{equation*} 
of the form
\begin{equation*}
\sup\limits_{0\leq t\leq T_0}t^{\frac{1-\alpha}{2}}\bigl\|N(v,q)\bigr\|_{L^{\infty}(\Omega)}(t)
\leq 
C \Big[f\Big]^{(\alpha)}_{\Omega}   \tag{1.7}
\end{equation*}
for all solutions $(v,q)$ of (1.1)--(1.4) for $v_0=\p \partial f$ with some constants $T_0$ and $C$, where $[f]^{(\alpha)}_{\Omega}$ denotes the H\"older semi-norm of $f$ in $\Omega$, i.e., 
\begin{equation*}
\Big[f\Big]^{(\alpha)}_{\Omega}=\sup\left\{\frac{\big|f(x)-f(y)\big|}{|x-y|^{\alpha}}\ \middle|\ x,y\in \Omega,\ x\neq y \right\}.
\end{equation*}
Since the H\"older semi-norm $[f]^{(\alpha)}_{\Omega}$ is estimated by $||f||_{\infty}^{1-\alpha}||\nabla f||_{\infty}^{\alpha}$ for $f\in C^{\infty}_{c}$, the estimate (1.5) follows from the a priori estimate (1.7). The solutions $(v,q)$ of the Stokes equations (1.1)--(1.4) are given by the Stokes semigroup $S(t)$ and the Helmholtz projection $\p$ on $L^{p}$. We call $(v,q)$ $L^{p}$-solution. We prove Theorem 1.1 from the following: 

\vspace{5pt}

\begin{thm}
Let $\Omega$ be a bounded or an exterior domain with $C^{3}$-boundary. Let $\alpha\in (0,1)$ and $p>n/(1-\alpha)$. For $T_0>0$ there exists a constant $C$ such that (1.7) holds for all $L^{p}$-solutions $(v,q)$ for $v_0=\p \partial f$, $f\in C^{\infty}_{c}(\Omega)$. Moreover, the estimate 
\begin{equation*}
\sup_{0\leq t\leq T_0}t^{\frac{1-\alpha}{2}+s+\frac{|k|}{2}}\big\|\partial_{t}^{s}\partial_{x}^{k}S(t)\p \partial f \big\|_{L^{\infty}(\Omega)}(t)
\leq C\big\|f\big\|^{1-\alpha}_{L^{\infty}(\Omega)}\big\|\nabla f\big\|^{\alpha}_{L^{\infty}(\Omega)}    \tag{1.8}
\end{equation*}
holds for $f\in C^{1}_{0}\cap W^{1,2}(\Omega)$ and $0\leq 2s+|k|\leq 2$.
\end{thm}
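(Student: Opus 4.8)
The plan is to prove the a priori estimate (1.7) for $f\in C^\infty_c(\Omega)$ by a blow-up (compactness--contradiction) argument, then to read off (1.8) for such $f$ directly from (1.7) and to pass to general $f\in C^1_0\cap W^{1,2}(\Omega)$ by density; Theorem~1.1 follows at once because $[f]^{(\alpha)}_\Omega\le C\|f\|^{1-\alpha}_{L^\infty}\|\nabla f\|^{\alpha}_{L^\infty}$ on $C^\infty_c(\Omega)$, an interpolation inequality. As preliminaries: for $f\in C^\infty_c(\Omega)$ the datum $v_0=\p\partial f$ lies in $L^p_\sigma(\Omega)$ for every $p\in(1,\infty)$, so the $L^p$-solution $(v,q)$ (with $v(\cdot,t)=S(t)v_0$) exists, is smooth up to $\partial\Omega$ for $t>0$ with $v(\cdot,t)\in C_{0,\sigma}(\Omega)$, and (when $\Omega$ is bounded) decays exponentially as $t\to\infty$; in particular $N(v,q)(\cdot,t)\in L^\infty(\Omega)$ for each $t>0$. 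The structural point is that (1.7) is invariant under the parabolic rescaling $v^\lambda(x,t)=\lambda v(\lambda x,\lambda^2 t)$, $q^\lambda(x,t)=\lambda^2 q(\lambda x,\lambda^2 t)$ on $\lambda^{-1}\Omega$: solutions are sent to solutions with datum $\p_{\lambda^{-1}\Omega}\partial f^\lambda$, $f^\lambda(x)=f(\lambda x)$, one has $N(v^\lambda,q^\lambda)(x,t)=\lambda N(v,q)(\lambda x,\lambda^2 t)$, and both $\sup_t t^{(1-\alpha)/2}\|N\|_{L^\infty}$ and $[f^\lambda]^{(\alpha)}$ pick up the common factor $\lambda^\alpha$.

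Suppose (1.7) fails for some $T_0>0$. Then there are $f_m\in C^\infty_c(\Omega)$ with $[f_m]^{(\alpha)}_\Omega=1$ and $A_m:=\sup_{0<s\le T_0}s^{(1-\alpha)/2}\|N(v_m,q_m)\|_{L^\infty}(s)\ge m\to\infty$, where $(v_m,q_m)$ is the $L^p$-solution for $\p\partial f_m$. Choose $t_m\in(0,T_0]$, $x_m\in\Omega$ with $t_m^{(1-\alpha)/2}N(v_m,q_m)(x_m,t_m)\ge A_m/2$, put $\lambda_m:=t_m^{1/2}$, and set $\tilde v_m(x,t):=A_m^{-1}\lambda_m^{1-\alpha}v_m(x_m+\lambda_m x,\lambda_m^2 t)$ (and $\tilde q_m$ accordingly) on $\Omega_m:=\lambda_m^{-1}(\Omega-x_m)$. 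A direct computation gives $\sup_{0<t\le1}t^{(1-\alpha)/2}\|N(\tilde v_m,\tilde q_m)\|_{L^\infty(\Omega_m)}(t)\le1$, $N(\tilde v_m,\tilde q_m)(0,1)\ge1/2$, and $\tilde v_m(\cdot,0)=\p_{\Omega_m}\partial g_m$ with $g_m(x)=A_m^{-1}\lambda_m^{-\alpha}\big(f_m(x_m+\lambda_m x)-f_m(x_m)\big)$, whence $[g_m]^{(\alpha)}_{\Omega_m}=A_m^{-1}\to0$ and $g_m\to0$ in $C^\alpha_{\mathrm{loc}}$. Passing to a subsequence, the dilated domains $\Omega_m$ converge locally to one of: $\R$ (when $\lambda_m^{-1}\textrm{dist}(x_m,\partial\Omega)\to\infty$), a half-space $H$ (when $\lambda_m\to0$ and $\lambda_m^{-1}\textrm{dist}(x_m,\partial\Omega)$ stays bounded, using the $C^3$-regularity of $\partial\Omega$ after a rotation), or a domain congruent to $\Omega$ (when $\limsup\lambda_m>0$ and $x_m$ stays bounded).

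Using the uniform $N$-bounds together with interior and up-to-the-boundary local regularity estimates for the non-stationary Stokes system --- after normalizing each $\tilde q_m$ by a function of $t$ so that it is determined --- I would extract a subsequence converging in $C^2_{\mathrm{loc}}$ in space and $C^1_{\mathrm{loc}}$ in time on $D\times(0,1]$ to a solution $(v,q)$ of the homogeneous Stokes equations on the limit domain $D$, satisfying $\sup_{0<t\le1}t^{(1-\alpha)/2}\|N(v,q)\|_{L^\infty}(t)\le1$ and $N(v,q)(0,1)\ge1/2$, so $v\not\equiv0$. Testing against $\phi\in C^\infty_{c,\sigma}(D)$ gives $\partial_t\langle v(\cdot,t),\phi\rangle=\langle v(\cdot,t),\Delta\phi\rangle$, hence $|\partial_t\langle v(\cdot,t),\phi\rangle|\le t^{-(1-\alpha)/2}\|\Delta\phi\|_{L^1}$, which is integrable near $t=0$; combined with $\langle\tilde v_m(\cdot,0),\phi\rangle=-\langle g_m,\partial\phi\rangle\to0$ and the uniform-in-$m$ version of this bound, the initial trace of $v$ vanishes. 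The hypothesis $p>n/(1-\alpha)$ (equivalently $1-\alpha-n/p>0$) guarantees that the $L^p$-solution is well defined and, through the scaling of the $L^p$-norm, that the blow-up limit lies in the admissibility class of a Liouville-type theorem for the Stokes equations on $\R$, on a half-space, and on a bounded or exterior domain --- for solutions with the singular weight $t^{-(1-\alpha)/2}$ at $t=0$ and vanishing initial trace. That theorem forces $v\equiv0$, hence $N(v,q)\equiv0$, contradicting $N(v,q)(0,1)\ge1/2$; this proves (1.7) for $f\in C^\infty_c(\Omega)$. When $\Omega$ is bounded, the case $T_0=\infty$ follows from the case $t=1$ just obtained together with $S(t)\p\partial f=S(t-1)\big(S(1)\p\partial f\big)$ for $t\ge1$ and the uniform boundedness of $S(\cdot)$ on $C_{0,\sigma}(\Omega)$.

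Finally, since $N(v,q)$ majorizes $|v|+t^{1/2}|\nabla v|+t|\nabla^2 v|+t|\partial_t v|$, estimate (1.7) already contains (1.8) for $f\in C^\infty_c(\Omega)$. For $f\in C^1_0\cap W^{1,2}(\Omega)$, choose $f_j\in C^\infty_c(\Omega)$ with $f_j\to f$ in $W^{1,\infty}(\Omega)$ and $\p\partial f_j\to\p\partial f$ in $L^2(\Omega)$ --- possible since $C^1_0$ is the $W^{1,\infty}$-closure of $C^\infty_c$ and $\p$ is bounded on $L^2$ --- so that $S(t)\p\partial f_j\to S(t)\p\partial f$ in $L^2$ and hence, by interior regularity, in $C^2_{\mathrm{loc}}$ in space and $C^1_{\mathrm{loc}}$ in time on $\Omega\times(0,T_0]$; then (1.8) passes to the limit because its right-hand side is continuous in $f$ on $W^{1,\infty}(\Omega)$. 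I expect the genuine obstacle to be the compactness/limit step above: normalizing and controlling the pressures before taking limits, correctly identifying the vanishing initial trace of the blow-up limit (the one place where $p>n/(1-\alpha)$ is really felt), and above all establishing the Liouville-type uniqueness statement on a half-space for solutions carrying the $t^{-(1-\alpha)/2}$ singularity --- the whole-space case being comparatively soft once the pressure is eliminated by its harmonicity.
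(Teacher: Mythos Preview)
Your overall architecture---blow-up, compactness, uniqueness of the limit, then density---matches the paper's. The genuine gaps are precisely the two places you flag as ``obstacles,'' and the paper resolves them with machinery you have not identified.

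\textbf{Pressure control.} Your phrase ``normalizing each $\tilde q_m$ by a function of $t$'' is not enough. The paper obtains compactness of $\nabla p_m$ in time and the crucial spatial decay by the \emph{harmonic-pressure gradient estimate}
\[
\sup_{x\in\Omega_m} d_{\Omega_m}(x)\,|\nabla p_m(x,t)| \le C\,\|\nabla u_m-\nabla^T u_m\|_{L^\infty(\partial\Omega_m)}(t),
\]
which is the scale-invariant consequence of admissibility for $\alpha=0$ (the Neumann problem (2.1) with $A=-\nabla u_m+\nabla^T u_m$). Combined with the $N$-bound this yields $t^{\gamma+1/2}d_{\Omega_m}(x)|\nabla p_m|\le C$; in the whole-space case this forces $\nabla p\equiv 0$, and in the half-space case it gives exactly the decay hypothesis $(5.2)$ required by the uniqueness theorem (Theorem~5.1). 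Without this estimate you can neither show the pressures are equicontinuous in $t$ (needed for the local H\"older estimates, Lemma~4.3) nor verify the pressure decay needed for half-space uniqueness.

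\textbf{Initial trace and the role of $\mathbb{Q}_{\Omega_m}$.} Your shortcut---testing only against solenoidal $\phi$ so that $\nabla\Phi_{0,m}=\mathbb{Q}_{\Omega_m}\partial g_m$ drops out---gives a weak initial condition that is strictly weaker than what the half-space uniqueness theorem uses: Theorem~5.1 is stated for \emph{all} $\varphi\in C_c^\infty(\mathbb{R}^n_+\times[0,T))$, with the $\nabla q\cdot\varphi$ term retained. The paper therefore must control $\nabla\Phi_{0,m}$ directly, and this is the entire point of the new notion of \emph{strongly admissible} domain: it furnishes the scale-invariant H\"older-type bound (Lemma~3.3)
\[
[\Phi_{1,m}]^{(\alpha)}_{\mathbb{R}^n}+\sup_{x\in\Omega_m} d_{\Omega_m}^{1-\alpha}(x)\,|\nabla\Phi_{2,m}(x)|\le C\,[g_m]^{(\alpha)}_{\Omega_m}\to 0,
\]
with $\nabla\Phi_{1,m}=\mathbb{Q}_{\mathbb{R}^n}\partial g_m$ and $\nabla\Phi_{2,m}=\mathbb{Q}_{\Omega_m}\partial g_m-\mathbb{Q}_{\mathbb{R}^n}\partial g_m$. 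This is what makes the right-hand side of the weak form vanish for non-solenoidal test functions. Proving that bounded and exterior $C^3$ domains are strongly admissible is itself a separate blow-up argument (Theorems~2.9, 2.11) and is the main new ingredient of the paper.

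Two smaller points. First, the paper arranges $t_m\in(0,1/m)$ by negating ``(1.7) holds for some $T_0,C$,'' so $t_m\to 0$ automatically and your third limit case (a domain congruent to $\Omega$) never arises. Second, the hypothesis $p>n/(1-\alpha)$ is not used for scaling of $L^p$-norms; it enters through Proposition~6.1, which shows $t^{\gamma}\|N(v,q)\|_\infty(t)\to 0$ as $t\downarrow 0$, so that the supremum in the blow-up is actually attained and the normalization is well defined.
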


\vspace{5pt}

We prove (1.7) by a blow-up argument. It is shown in \cite{AG1} by a blow-up argument that the Stokes semigroup is an analytic semigroup on $C_{0,\sigma}(\Omega)$ for not only bounded domains but also unbounded domains for which some a priori estimate holds for the Neumann problem of the Laplace equation in $\Omega$. We call such a domain \textit{admissible} and it is proved in \cite{AG1} that bounded domains of class $C^{3}$ are admissible. Later, admissibility is proved in \cite{AG2} for exterior domains and in \cite{A1} (\cite[Remarks 1.5 (i)]{AGH}) for a perturbed half space. More recently, admissibility is studied in \cite{AGSS1} for two-dimensional sector-like domains and in \cite{AGSS2} for cylindrical domains. In order to establish (1.7), we introduce a stronger term \textit{strongly admissible}. The term strongly admissible is explained later in the introduction. In this paper, we prove that bounded and exterior domains of class $C^{3}$ are strongly admissible.

We prove (1.7) for general strongly admissible, uniformly $C^{3}$-domains based on the $\tilde{L}^{p}$-theory developed in \cite{FKS1}, \cite{FKS2}, \cite{FKS3}. It is proved in these works that the Helmholtz projection yields a unique decomposition on $\tilde{L}^{p}=L^{p}\cap L^{2}$ ($p\geq 2$) and the Stokes semigroup is analytic on $\tilde{L}^{p}$ for general uniformly $C^{2}$-domains. Thus, solutions of (1.1)--(1.4) exist in a general uniformly $C^{2}$-domain. We prove (1.7) for their $\tilde{L}^{p}$-solutions. The following Theorem 1.3 is a general form of Theorem 1.2.

\vspace{5pt}

\begin{thm}
Let $\Omega$ be a strongly admissible, uniformly $C^{3}$-domain. Let $\alpha\in (0,1)$ and $p>n/(1-\alpha)$. Then, the estimate (1.7) holds for all $\tilde{L}^{p}$-solutions $(v,q)$ for $v_0=\p \partial f$, $f\in C^{\infty}_{c}(\Omega)$. Moreover, (1.8) holds for $f\in C^{1}_{0}\cap W^{1,2}(\Omega)$.
\end{thm}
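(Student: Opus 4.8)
The plan is to prove Theorem 1.3 by a contradiction/blow-up argument, reducing the $L^\infty$ a priori bound (1.7) to a Liouville-type rigidity statement for the Stokes system on the whole space and on a half space. Suppose (1.7) fails: then there are sequences $f_m\in C^\infty_c(\Omega)$ with $[f_m]^{(\alpha)}_\Omega=1$, times $t_m\in(0,T_0]$, and points $x_m\in\Omega$ at which $t_m^{(1-\alpha)/2}N(v_m,q_m)(x_m,t_m)\to\infty$, where $(v_m,q_m)$ is the $\tilde L^p$-solution for $v_{0,m}=\mathbb P\partial f_m$. A key preliminary step is to establish, for $\tilde L^p$-solutions with $p>n/(1-\alpha)$, the qualitative smoothing and a weighted local bound that legitimize the choice of a maximizing sequence and guarantee the supremum defining the blow-up quantity is attained and finite for each fixed $m$ (this uses analyticity of the Stokes semigroup on $\tilde L^p$ from \cite{FKS1}--\cite{FKS3}, $L^p$--$L^\infty$ and $L^p$--$W^{1,p}$ embeddings since $p>n$, and interior/boundary regularity of the Stokes system together with the admissibility estimate for the associated Neumann problem to control the pressure gradient).

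Next I would set up the rescaling. Let $M_m$ be the (finite, by the previous step) value of $\sup_{0\le t\le T_0}t^{(1-\alpha)/2}\|N(v_m,q_m)\|_\infty(t)$, so $M_m\to\infty$, and pick $(x_m,t_m)$ nearly realizing it. Rescale by $\lambda_m=t_m^{1/2}$: set
\begin{equation*}
u_m(x,t)=\frac{\lambda_m}{M_m}\,v_m(x_m+\lambda_m x,\ \lambda_m^2 t),\qquad
p_m(x,t)=\frac{\lambda_m^2}{M_m}\,q_m(x_m+\lambda_m x,\ \lambda_m^2 t),
\end{equation*}
on the rescaled domains $\Omega_m=\lambda_m^{-1}(\Omega-x_m)$. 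The normalization is arranged so that the rescaled blow-up quantity $\tilde N(u_m,p_m)$ satisfies a uniform bound of size $O(1)$ on $\{t\le 1\}$ and is $\gtrsim 1$ at $(0,1)$ (the parabolic scaling of $N$ matches the $t^{(1-\alpha)/2}$ weight, which is exactly why the exponents in (1.7) and in the definition of $N$ are chosen as they are). One checks that the rescaled initial data $u_{0,m}=\mathbb P\partial g_m$ with $g_m$ the rescaled $f_m$; since $[f_m]^{(\alpha)}_\Omega=1$, scaling gives $[g_m]^{(\alpha)}_{\Omega_m}=\lambda_m^{\alpha-1}/M_m\to 0$, i.e. the rescaled data degenerate. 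Passing to a subsequence, $\Omega_m$ converges (in the sense used in \cite{AG1}) either to $\mathbb R^n$ or to a half space $\mathbb R^n_+$; here \emph{strong} admissibility is what is needed to pass the Neumann/Helmholtz estimates to the limit uniformly and to obtain compactness of $\{(u_m,p_m)\}$ in suitable local norms, yielding a limit $(u,p)$ solving the homogeneous Stokes system on $\mathbb R^n$ or $\mathbb R^n_+$ (with no-slip on the flat boundary) with $\tilde N(u,p)$ bounded on $\{t\le 1\}$, $\tilde N(u,p)(0,1)\ne 0$, and — from the degeneration of the data together with the $L^p$--$L^\infty$ control — $u(\cdot,0)=0$ in an appropriate sense.

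The final step is the Liouville argument: a bounded (in the $N$-sense) mild solution of the Stokes equations on $\mathbb R^n$ or $\mathbb R^n_+$ with zero initial data must vanish. On $\mathbb R^n$ one applies $\mathbb P$ and the explicit heat representation; on $\mathbb R^n_+$ one uses the Stokes/Ukai solution formula and the known $L^\infty$ smoothing on a half space (cf. \cite{Sl03}, \cite{BJ}) to conclude $u\equiv0$, $\nabla p\equiv0$ on $\{t\le 1\}$, contradicting $\tilde N(u,p)(0,1)\ne0$. This proves (1.7) for $f\in C^\infty_c(\Omega)$. The estimate (1.8) for $f\in C^1_0\cap W^{1,2}(\Omega)$ then follows by approximating such $f$ by $C^\infty_c$ functions in the $W^{1,\infty}$-norm (using $C^1_0=\overline{C^\infty_c}^{\,W^{1,\infty}}$), bounding $[f]^{(\alpha)}_\Omega\le C\|f\|_\infty^{1-\alpha}\|\nabla f\|_\infty^{\alpha}$, and taking limits in the linear estimate (1.7), the individual terms of $N$ being exactly the quantities $\partial_t^s\partial_x^k S(t)\mathbb P\partial f$ with $2s+|k|\le 2$. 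I expect the main obstacle to be the compactness/convergence step: justifying that the rescaled solutions and pressures converge locally to a genuine Stokes solution on the limit domain — in particular controlling the pressure term $t|\nabla q|$ uniformly under rescaling near the (possibly curved, then flattening) boundary — which is precisely where the quantitative content of strong admissibility must be invoked and is the reason the weaker notion of admissibility from \cite{AG1} does not suffice here.
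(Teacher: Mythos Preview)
Your overall blow-up architecture matches the paper's, but you have misidentified where the new hypothesis (strong admissibility) is actually used, and you underestimate the half-space uniqueness step.

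\medskip

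\textbf{Where strong admissibility enters.} You write that strong admissibility is needed ``to obtain compactness of $\{(u_m,p_m)\}$'' and that controlling $t|\nabla q|$ under rescaling ``is precisely where the quantitative content of strong admissibility must be invoked.'' This is not right. Compactness and the pressure bound use only admissibility for $\alpha=0$: the scale-invariant harmonic-pressure estimate $d_\Omega(x)|\nabla q(x,t)|\le C\|\nabla v-\nabla^T v\|_{L^\infty(\partial\Omega)}$ (i.e.\ (4.1)) already gives the local H\"older estimates needed for Ascoli--Arzel\`a, exactly as in \cite{AG1}. The genuinely new $\alpha\in(0,1)$ part of strong admissibility is needed for a different reason: to show that the \emph{initial data} of the blow-up limit vanish. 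The rescaled data are $u_{0,m}=\mathbb P_{\Omega_m}\partial g_m=\partial g_m-\nabla\Phi_{0,m}$, and while $[g_m]^{(\alpha)}_{\Omega_m}\to 0$ is easy, it is not at all automatic that $\nabla\Phi_{0,m}=\mathbb Q_{\Omega_m}\partial g_m\to 0$, because $\mathbb Q$ is not bounded on $L^\infty$ or on $C^\alpha$. The paper handles this by splitting $\nabla\Phi_{0,m}=\nabla\Phi_{1,m}+\nabla\Phi_{2,m}$ with $\nabla\Phi_{1,m}=\mathbb Q_{\mathbb R^n}\partial g_m$ and proving the scale-invariant bound
\[
[\Phi_{1,m}]^{(\alpha)}_{\mathbb R^n}+\sup_{x\in\Omega_m}d_{\Omega_m}^{1-\alpha}(x)\,|\nabla\Phi_{2,m}(x)|\le C[g_m]^{(\alpha)}_{\Omega_m},
\]
which is exactly (1.12). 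The second term is estimated by observing that $\Phi_{2,m}$ solves the Neumann problem (1.10) with $A=\nabla h-\nabla^T h$ and then invoking the a priori estimate (1.11) for $\alpha>0$; this is the sole place where the ``strong'' in strongly admissible is used. Your sentence ``from the degeneration of the data together with the $L^p$--$L^\infty$ control --- $u(\cdot,0)=0$'' glosses over precisely this point; $L^p$--$L^\infty$ embeddings give you nothing here.

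\medskip

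\textbf{The half-space uniqueness.} You propose to conclude by ``the Stokes/Ukai solution formula and the known $L^\infty$ smoothing on a half space (cf.\ \cite{Sl03}, \cite{BJ}).'' Those results assume continuity (or at least boundedness) of $v$ up to $t=0$. The blow-up limit here satisfies only $\sup_t t^{\gamma}\|N(u,p)\|_\infty<\infty$ with $\gamma=(1-\alpha)/2$, together with the weak initial condition $\int_0^1\int_{\mathbb R^n_+}(u\cdot(\partial_t\varphi+\Delta\varphi)-\nabla p\cdot\varphi)\,dx\,dt=0$ and the decay $t^{\gamma+1/2}x_n|\nabla p|\le C$ inherited from (4.1). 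A separate uniqueness theorem under these hypotheses (Theorem 5.1 in the paper) must be proved; the argument goes through a duality with an $L^1$-solution of the adjoint Stokes problem and is not covered by \cite{Sl03}. Without this, your Liouville step on $\mathbb R^n_+$ has a gap.

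\medskip

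The remaining pieces of your outline (regularity of $\tilde L^p$-solutions to justify the supremum, the rescaling bookkeeping, the interpolation $[f]^{(\alpha)}\le 2\|f\|_\infty^{1-\alpha}\|\nabla f\|_\infty^{\alpha}$, and the $C^\infty_c$-approximation in $C^1_0\cap W^{1,2}$ to pass from (1.7) to (1.8)) are in line with the paper.
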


\vspace{10pt}

Let us sketch the proof of the a priori estimate (1.7). When $\Omega$ is the whole space, the Stokes semigroup agrees with the heat semigroup (i.e., $v=e^{t\Delta}\p \partial f$, $\nabla q\equiv 0$). We estimate $v=\partial e^{t\Delta}\p f$ by the H\"older semi-norm of $\p f$, i.e., 
\begin{equation*}
\big\|\partial e^{t\Delta}\p f\big\|_{\infty}\leq \frac{C}{t^{\frac{1-\alpha}{2}}}\Big[\p f\Big]_{\R}^{(\alpha)}.
\end{equation*}
Since the H\"older semi-norm of $\p f$ is estimated by $[f ]^{(\alpha)}_{\R}$ (see Proposition 3.1), the estimate (1.7) holds for $0<\alpha<1$. (We are able to prove the case $\alpha=0$ by estimating the Oseen kernel $K_{t}$, i.e., $e^{t\Delta}\p f=K_{t}*f$; see \cite{GIM}, \cite{ShibaS} for the whole space and \cite{Sl03}, \cite{BJ} for a half space).

We prove (1.7) by a blow-up argument. For simplicity, we set $\gamma=(1-\alpha)/2$. We prove the existence of constants $T_0$ and $C$ such that (1.7) holds for all $f\in C_{c}^{\infty}(\Omega)$. Suppose on the contrary that (1.7) were false. Then, there would exist a sequence of solutions for (1.1)--(1.4), $(v_m,q_m)$ for $v_{0,m}=\p_{\Omega} \partial f_m$ such that 
\begin{equation*}
\sup_{0\leq t\leq 1/m}t^{\gamma}\big\|N(v_m,q_m)\big\|_{L^{\infty}(\Omega)}(t)>m\Big[f_m\Big]^{(\alpha)}_{\Omega}.
\end{equation*}
We take a point $t_m\in (0,1/m)$ such that 
\begin{equation*}
t^{\gamma}_{m}\bigl\|N(v_m,q_m)\bigr\|_{L^{\infty}(\Omega)}(t_m) \geq \frac{1}{2}M_m, \quad M_m = \sup\limits_{0\leq t\leq 1/m}t^{\gamma}\bigl\|N(v_m,q_m)\bigr\|_{L^{\infty}(\Omega)}(t),
\end{equation*}
and normalize $(v_m,q_m)$ by dividing by $M_m$ to get $\tilde{v}_m=v_m/M_m$, $\tilde{q}_m=q_m/M_m$ and $\tilde{f}_{m}=f_m/M_m$ satisfying 
\begin{alignat*}{2}
\sup\limits_{0\leq t\leq t_m}t^{\gamma}\bigl\|N(\tilde{v}_m,\tilde{q}_m)\bigr\|_{L^{\infty}(\Omega)}(t)&\leq 1,  \\
t^{\gamma}_{m}\bigl\|N(\tilde{v}_m,\tilde{q}_m)\bigr\|_{L^{\infty}(\Omega)}(t_m)&\geq \frac{1}{2},  \\
\Big[\tilde{f}_m\Big]^{(\alpha)}_{\Omega}&<\frac{1}{m}. 
\end{alignat*}
 Then, we rescale $(\tilde{v}_m,\tilde{q}_m)$ around a point $x_m \in \Omega$ such that
\begin{equation*}
t^{\gamma}_{m}N(\tilde{v}_m,\tilde{q}_m)(x_m,t_m) \geq \frac{1}{4} 
\end{equation*}
to get a blow-up sequence $(u_m,p_m)$ of the form
\begin{equation*}
u_m(x,t)=t^{\gamma}_{m}\tilde{v}_m(x_m + t_m^{\frac{1}{2}}x,t_m t),
\quad p_m(x,t)=t_m^{\gamma+\frac{1}{2}}\tilde{q}_m(x_m + t_m^{\frac{1}{2}}x,t_m t),
\end{equation*}
and
\begin{equation*}
 g_m(x)=t_m^{-\frac{\alpha}{2}}\tilde{f}_{m}(x_m+t_m^{\frac{1}{2}}x ).
\end{equation*}
The blow-up sequence $(u_m,p_m)$ satisfies (1.1)--(1.4) for $u_{0,m}=\p_{\Omega_m}\partial g_m$ in $\Omega_m\times (0,1]$ and the rescaled domain $\Omega_m$ expands to either the whole space or a half space as $m\to\infty$. 

The basic strategy is to prove a compactness of the blow-up sequence $(u_m,p_m)$ and a uniqueness of a blow-up limit. If $(u_m,p_m)$ converges to a limit $(u,p)$ strongly enough, one gets a bound from below $N(u,p)(0,1)\geq 1/4$. On the other hand, $(u,p)$ solves a limit problem for $u(\cdot,0)=0$ in a suitable sense. If the limit $(u,p)$ is unique, $u\equiv 0$ and $\nabla p\equiv 0$ follow. This yields a contradiction. For the compactness of $(u_m,p_m)$, we apply the local H\"older estimates for (1.1)--(1.4) proved in \cite{AG1} (to get an equi-continuity of $(u_m,p_m)$). For the uniqueness of $(u,p)$, we develop a uniqueness theorem in a half space. The uniqueness of (1.1)-(1.4) in a half space was proved in \cite{Sl03} for bounded velocity up to time zero. We extend the result for velocity which may not be bounded at time zero based on the previous work of the author \cite{A2}. When $\Omega_m$ expands to the whole space, the uniqueness is reduced to the heat equation.  

A key step of the proof is to get a sufficiently strong initial condition for the blow-up limit $(u,p)$ in order to apply a uniqueness theorem. If the initial data $u_{0,m}$ does not involve the Helmholtz projection $\p_{\Omega_m}$, it is easy to see $u_{0,m}\to 0$ (in a suitable weak sense) as $[g_m]_{\Omega_m}^{(\alpha)}\to 0$ and $m\to\infty$. However, it is non-trivial whether $u_{0,m}=\p_{\Omega_m}\partial g_m\to 0$ as $[g_m]_{\Omega_m}^{(\alpha)}\to 0$ because of the term $\nabla\Phi_{0,m}=\q_{\Omega_m}\partial g_m$ where $\q_{\Omega_m}=I-\p_{\Omega_m}$. When $\Omega$ is the whole space, the projection $\q_{\R}$ has an explicit form by the fundamental solution of the Laplace equation $E$. In fact, $\nabla \Phi_{1}=\q_{\R}\partial f$ agrees with $-\nabla \D h$ for $h=E*\partial f$ so the H\"older semi-norm of $\nabla h$ is estimated by $[f]^{(\alpha)}_{\R}$ and 
\begin{equation*}
\Big[\Phi_{1} \Big]^{(\alpha)}_{\R} \leq C_{\alpha}\Big[f\Big]^{(\alpha)}_{\R}.   \tag{1.9}
\end{equation*}
Since the H\"older estimate (1.9) is scale invariant, it is inherited to $\nabla\Phi_{1,m}=\q_{\R}\partial g_m$. We need a corresponding estimate for $\nabla \Phi_{0}=\q_{\Omega}\partial f$. For this purpose, we consider the Neumann problem of the Laplace equation
\begin{equation*}
\Delta \Phi=0\quad \textrm{in}\ \Omega,\quad \frac{\partial \Phi}{\partial n}=\D_{\partial\Omega}\ (An)\quad \textrm{on}\ \partial\Omega,   \tag{1.10} 
\end{equation*}
for skew-symmetric matrix-valued functions $A\in C^{\alpha}\big(\overline{\Omega}\big)$ for $\alpha\in (0,1)$, where $\D_{\partial\Omega}$ denotes the surface divergence on $\partial\Omega$ and $n=n_{\Omega}$ denotes the unit outward normal vector field on $\partial\Omega$. For a skew-symmetric $A$, $An$ is a tangential vector field on $\partial\Omega$. Moreover, $\D_{\partial\Omega}(An)= 0$ if $A$ is constant. We call $\Omega$ \textit{admissible} for $\alpha\in (0,1)$ if the a priori estimate 
\begin{equation*}
\sup_{x\in \Omega}d^{1-\alpha}_{\Omega}(x)
\big|\nabla \Phi(x)\big|\leq C\Big[A\Big]^{(\alpha)}_{\Omega}   \tag{1.11}
\end{equation*}
holds for all solutions of (1.10). Here, $d_{\Omega}(x)$ denotes the distance from $x\in \Omega$ to $\partial\Omega$. When $\alpha=0$, we replace the right-hand side by the sup-norm of $A$ on $\partial\Omega$ and call the corresponding notion admissible for $\alpha=0$. We call $\Omega$ \textit{strongly admissible} if $\Omega$ is admissible for all $\alpha\in [0,1)$. (See Definitions 2.1 and 2.3 in the next section). In this paper, we prove that bounded and exterior domains of class $C^{3}$ are strongly admissible.

The estimate (1.11) implies a scale invariant estimate corresponding to (1.9). We decompose $\nabla \Phi_0=\q_{\Omega}\partial f$ into two terms $\nabla \Phi_{1}=\q_{\R}\partial f$ and $\nabla \Phi_{2}$ (by the zero extension of $f$ to $\R\backslash\overline{\Omega}$). Then, $\Phi_{2}$ solves the Neumann problem (1.10) for $A=\nabla h-\nabla^{T}h$. We estimate $\nabla \Phi_{0}=\nabla \Phi_{1}+\nabla \Phi_{2}$ through the estimate (1.11) by
\begin{equation*}
\Big[\Phi_{1}\Big]^{(\alpha)}_{\R}+\sup_{x\in \Omega}d^{1-\alpha}_{\Omega}(x)
\big|\nabla \Phi_{2}(x)\big|\leq C_{\alpha}\Big[f \Big]^{(\alpha)}_{\Omega}.   \tag{1.12}
\end{equation*}
Since (1.12) is scale invariant, it is inherited to $\nabla \Phi_{0,m}=\q_{\Omega_m}\partial g_m$ so $\nabla \Phi_{0,m}$ tends to zero as $[g_m]^{(\alpha)}_{\Omega_{m}}\to 0$. This yields a sufficiently strong initial condition $u(\cdot,0)=0$ for the blow-up limit $(u,p)$. \\

Actually, we used the estimate (1.11) for $\alpha=0$ in order to prove analyticity of the Stokes semigroup on $C_{0,\sigma}$ by a similar blow-up argument \cite{AG1}. Since the pressure $p_m$ solves the Neumann problem (1.10) for $A=-\nabla u_m+\nabla^{T} u_m$, the a priori estimate (1.11) for $\alpha=0$ implies a scale invariant estimate for $\nabla p_m$ in terms of velocity on $L^{\infty}$ (harmonic-pressure gradient estimate). The harmonic-pressure gradient estimate implies a necessary time H\"older continuity of $p_m$ for the compactness of $(u_m,p_m)$ and a decay condition $\nabla p\to 0$ as $x_n\to\infty$ for the uniqueness of the blow-up limit $(u,p)$.\\

This paper is organized as follows. In Section 2, we define the term strongly admissible and prove that bounded and exterior domains of class $C^{3}$ are strongly admissible. In Section 3, we prove the H\"older-type estimate (1.12). In Section 4, we recall the $\tilde{L}^{p}$-theory and review the local H\"older estimates for the Stokes equations. In Section 5, we prove a uniqueness theorem for the Stokes equations in a half space. In Section 6, we prove Theorem 1.3. After the proof of Theorem 1.3, we complete the proof of Theorems 1.2 and 1.1. In Appendix A, we review $L^{1}$-type results for the Stokes equations in a half space used in Section 5. In Appendix B, we give short proofs for uniqueness of the heat equation used in Section 5 and Section 6.\\

\vspace{10pt} 
 
After the first draft of this paper is written \cite{AarXive1}, the author was informed of the paper \cite{Mar2014} on the exterior problem of the Navier-Stokes equations for $n\geq 3$. In the paper, a local solvability result for H\"older continuous initial data  \cite{GMZ} is extended for merely bounded $u_0\in L^{\infty}_{\sigma}$ based on the $L^{\infty}$-estimates of the Stokes semigroup \cite{AG1}, \cite{AG2}. Note that mild solutions of the Navier-Stokes equations on $L^{\infty}$ are not constructed without composition operator estimates. The estimate (1.5) yields unique existence of mild solutions on $C_{0,\sigma}$ for the exterior problem ($n\geq 2$) together with the existence time estimate from below by a sup-norm of initial data \cite{A3}. More recently, mild solutions on $L^{\infty}_{\sigma}$ are constructed in \cite{A4} based on the main results of this paper.

\vspace{10pt}

\section{Strongly admissible domains}

\vspace{5pt}

In this section, we introduce the term strongly admissible and prove that bounded and exterior domains of class $C^{3}$ are strongly admissible (Theorems 2.9 and 2.11). The proof is by a blow-up argument and parallel to the case $\alpha=0$ as in the previous works \cite{AG1}, \cite{AG2}.

\subsection{\rm{A priori estimates for the Neumann problem}}\mbox{}\\

Let $\Omega$ be a domain in $\R$, $n\geq 2$, $\partial\Omega\neq \emptyset$. We say that $\partial \Omega$ is $C^{k}$ ($k\geq 1$) if for each $x_{0}\in \partial \Omega$, there exist constants $\alpha,\beta, K$ and a $C^{k}$-function $h=h(y')$ such that by translation from $x_0$ to the origin and rotation, we have  
\begin{align*} 
&U(0) \cap \Omega=\Big\{(y',y_n)\in \R\ \big|\ h(y')<y_n<h(y')+\beta,\ |y'|<\alpha \Big\},   \\ 
&U(0) \cap \partial\Omega=\Big\{(y',y_n)\in \R\ \big|\ y_n=h(y'),\ |y'|<\alpha\Big\},\\
&\sup\limits_{|l|\leq k, |y'|<\alpha}\big|\partial^{l}_{y'} h(y')\big| \leq K,\  \nabla 'h(0)=0,\ h(0)=0,
\end{align*}
with the neighborhood of the origin 
\begin{equation*} 
U(0)=\Big\{(y',y_n) \in \R\ \big|\ h(y')-\beta<y_n<h(y')+\beta,\  |y'|<\alpha\Big\}.
\end{equation*}
Here, $\partial_x^l=\partial_{x_1}^{l_1} \cdots \partial_{x_n}^{l_n}$ for  a multi-index $l=(l_1, \ldots, l_n)$ and $\partial_{x_j}=\partial/\partial x_j$ as usual and $\nabla '$ denotes the gradient in $\mathbb{R}^{n-1}$. If $h$ is just Lipschitz continuous, we call $\partial\Omega$ Lipschitz boundary. Moreover, if we are able to take uniform constants $\alpha,\beta, K$ independent of each $x_0\in \partial\Omega$, we call $\Omega$ a  uniformly $C^{k}$-domain (Lipschitz domain) of type $\alpha,\beta,K$ as defined in \cite[I.3.2]{Sohr}. In order to distinguish $\alpha,\beta,K$ from H\"older exponents, we may write $\alpha', \beta', K'$.\\

We begin with the term admissible for $\alpha=0$. Let $\Omega$ be a domain in $\R$ with $C^{1}$-boundary. We consider the Neumann problem of the Laplace equation, 
\begin{equation*}
\Delta \Phi=0\quad \textrm{in}\ \Omega,\quad \frac{\partial \Phi}{\partial n}=\D_{\partial\Omega}\ (An)\quad \textrm{on}\ \partial\Omega,    \tag{2.1} 
\end{equation*}
for skew-symmetric matrix-valued functions $A$, where $\textrm{div}_{\partial\Omega}=\textrm{tr}\ \nabla_{\partial\Omega}$ denotes the surface divergence on $\partial\Omega$ and $\nabla_{\partial\Omega}=\nabla -n(n\cdot \nabla )$ is the gradient on $\partial\Omega$ for $n=n_{\Omega}$. Since $A=(a_{ij})$ is skew-symmetric, $An=(\sum_{j=1}^{n}a_{ij}n^{j})$ is a tangential vector field on $\partial\Omega$. Let $BC(\overline{\Omega})$ denote the space of all bounded and continuous functions in $\overline{\Omega}$. Let $BC_{\textrm{sk}}(\overline{\Omega})$ denote the space of all skew-symmetric matrix-valued functions $A\in BC(\overline{\Omega})$. We call $\Phi\in L^{1}_{\textrm{loc}}(\overline{\Omega})$ solution of (2.1) for $A\in BC_{\textrm{sk}}(\overline{\Omega})$ if $\Phi$ satisfies 
\begin{equation*}
\sup_{x\in \Omega}d_{\Omega}(x)\big|\nabla \Phi(x)\big|<\infty,   \tag{2.2}
\end{equation*}
and 
\begin{equation*}
\int_{\Omega}\Phi \Delta \varphi \textrm{d}x=\int_{\partial\Omega}An\cdot \nabla\varphi \dd{\cal{H}}    \tag{2.3}
\end{equation*}
for all $\varphi\in C^{2}_{c}\big(\overline{\Omega}\big)$ satisfying $\partial \varphi/\partial n=0$ on $\partial\Omega$, where $d{\cal{H}}$ denotes the surface element of $\partial\Omega$.  \\

The term admissible for $\alpha=0$ is defined by an a priori estimate for (2.1).

\vspace{5pt}

\begin{defn}[Admissible for $\alpha=0$]
Let $\Omega$ be a domain in $\R$ with $C^{1}$-boundary. We call $\Omega$ \textit{admissible for $\alpha=0$} if there exists a constant $C=C_{\Omega}$ such that the a priori estimate 
\begin{equation*}
\sup_{x\in\Omega}d_{\Omega}(x)\big|\nabla \Phi(x)\big|\leq C\big\|A\big\|_{L^{\infty}(\partial\Omega)}   \tag{2.4}
\end{equation*}
holds for all solutions of (2.1) for $A\in BC_{\textrm{sk}}(\overline{\Omega})$.    
\end{defn}

\begin{rem}
The term admissible was first introduced in \cite{AG1} by using  the Helmholtz projection $\p$ on $\tilde{L}^{p}=L^{p}\cap L^{2}$ for uniformly $C^{1}$-domains. We call $\Omega$ admissible  in the sense of \cite[Definition 2.3]{AG1} if there exists a constant $C$ such that the estimate 
\begin{equation*}
\sup_{x\in \Omega}d_{\Omega}(x)\big|(\q_{\Omega}\nabla \cdot f)(x)\big|\leq C\big\|f\big\|_{L^{\infty}(\partial\Omega)}   \tag{2.5}
\end{equation*}
holds for all matrix-valued functions $f=(f_{ij})\in C^{1}(\overline{\Omega})$ such that $\nabla \cdot f=(\sum_{j}\partial_{j}f_{ij})\in \tilde{L}^{p}$ ($p\geq n$), $\textrm{tr}\ f=0$ and $\partial_{l}f_{ij}=\partial_{j}f_{il}$ for $i,j,l\in \{1,\cdots, n \}$. The term admissible for $\alpha=0$ is a stronger notion than admissible in the sense of \cite{AG1}. In fact, $\nabla \Phi=\q_{\Omega}\nabla \cdot f$ is a solution of the Neumann problem (2.1) for $A=-f+f^{T}$ so the estimate (2.5) follows from (2.4). Although admissible for $\alpha=0$ is stronger than the original notion, we are able to prove that bounded and exterior domains of class $C^{3}$ are also admissible for $\alpha=0$ by a blow-up argument as in \cite{AG1}, \cite{AG2} (see also Remark 2.10). 
\end{rem}

\vspace{5pt}

We define the term admissible for $\alpha\in (0,1)$. Let  $C^{\alpha}\big(\overline{\Omega}\big)$ denote the space of all H\"older continuous functions with exponent $\alpha$ in $\overline{\Omega}$. Let $C_{\textrm{sk}}^{\alpha}\big(\overline{\Omega}\big)$ denote the space of all skew-symmetric matrix-valued functions $A\in C^{\alpha}\big(\overline{\Omega}\big)$. We call $\nabla \Phi\in L^{1}_{\textrm{loc}}(\overline{\Omega})$ solution of (2.1) for $A\in C^{\alpha}_{\textrm{sk}}\big(\overline{\Omega}\big)$ if $\Phi$ satisfies 
\begin{equation*}
\sup_{x\in \Omega}d^{1-\alpha}_{\Omega}(x)\big|\nabla \Phi(x)\big|<\infty,    \tag{2.6}
\end{equation*}
and 
\begin{equation*}
\int_{\Omega}\nabla \Phi \cdot\nabla \varphi \textrm{d}x=-\int_{\partial\Omega}An\cdot \nabla\varphi \dd{\cal{H}}    \tag{2.7}
\end{equation*}
for all $\varphi\in C^{1}_{c}\big(\overline{\Omega}\big)$. We also call $\nabla \Phi$ for $A\in C^{\alpha}_{\textrm{sk}}\big(\overline{\Omega}\big)$ solution of (2.1) of type $\alpha$ in order to distinguish it from that for $A\in BC_{\textrm{sk}}(\overline{\Omega})$.\\

We define the term strongly admissible by a priori estimates for $\alpha\in [0,1)$. 

\vspace{5pt}

\begin{defn}[Strongly admissible]
Let $\Omega$ be a domain in $\R$ with $C^{1}$-boundary. We call $\Omega$ admissible for $\alpha \in (0,1)$ if there exists a constant $C=C_{\alpha,\Omega}$ such that the a priori estimate 
\begin{equation*}
\sup_{x\in \Omega}d^{1-\alpha}_{\Omega}(x)\big|\nabla \Phi(x)\big|\leq C\Big[A\Big]^{(\alpha)}_{\Omega}    \tag{2.8}
\end{equation*}
holds for all solutions of (2.1) for $A\in {C}^{\alpha}_{\textrm{sk}}\big(\overline{\Omega}\big)$. We call $\Omega$ \textit{strongly admissible} if $\Omega$ is admissible for all $\alpha\in [0,1)$. 
\end{defn}

\begin{rems}
\noindent
(i) The constants in (2.4) and (2.8) are invariant of dilation,  translation and rotation of $\Omega$.

\noindent 
(ii) Strongly admissible domains are admissible in the sense of \cite{AG1} by Remark 2.2.

\noindent 
(iii) A half space is strongly admissible. Let $E$ denote the fundamental solution of the Laplace equation, i.e., $E(x)=C_{n}/ |x|^{n-2}$ for $n\geq 3$ and $E(x)=-1/(2\pi)\log{|x|}$ for $n=2$, where $C_n=(an(n-2))^{-1}$ and the volume of $n$-dimensional unit ball $a$. Solutions of the Neumann problem (2.1) are expressed by 
\begin{equation*}
\Phi(x',x_n)=\int_{x_n}^{\infty}e^{sA}\D_{\partial\R_{+}}w \dd s
\end{equation*}
for $w=An_{\R_{+}}$ and the Poisson semigroup 
\begin{equation*}
e^{sA}g=-2\int_{\partial\R_{+}}\frac{\partial E}{\partial s}(x'-y',s)g(y')\dd y'.
\end{equation*}
Here, $x'$ denotes $(n-1)$-variable of $x=(x',x_n)$. The Poisson semigroup is an analytic semigroup on $L^{p}(\mathbb{R}^{n-1})$ for $1\leq p\leq \infty$ and its generator is given by $A=-(-\Delta_{\textrm{tan}})^{1/2}$ (see, e.g., \cite[Example 3.7.9]{ABHN}). The a priori estimates (2.4) and (2.8) can be viewed as the $L^{\infty}$-estimates of the Poisson semigroup  
\begin{align*}
\big\|\partial_{\textrm{tan}}e^{sA}g\big\|_{L^{\infty}(\mathbb{R}^{n-1})}
&\leq \frac{C}{s}\big\|g\big\|_{L^{\infty}(\mathbb{R}^{n-1})}, \tag{2.9} \\
\big\|\partial_{\textrm{tan}}e^{sA}g\big\|_{L^{\infty}(\mathbb{R}^{n-1})}
&\leq \frac{C}{s^{1-\alpha}}\Big[g\Big]^{(\alpha)}_{\mathbb{R}^{n-1}}
\quad s>0.  \tag{2.10}
\end{align*}  
Here, $\partial_{\textrm{tan}}=\partial_{j}$ indiscriminately denotes the tangential derivatives $j=1,\cdots,n-1$. The estimates (2.4) and (2.8) respectively follow from (2.9) and (2.10). Since 
\begin{align*}
\partial_{x_{j}} e^{sA}g
&= -2\int_{\partial\R_{+}}\frac{\partial^{2} E}{\partial x_{j}\partial s}(x'-y',s)g(y')\dd y'  \\
&=-2\int_{\partial\R_{+}}\frac{\partial^{2} E}{\partial y_{j}\partial s}(y',s)g(x'-y')\dd y',\\
&=-2\int_{\partial\R_{+}}\frac{\partial^{2} E}{\partial y_{j}\partial s}(y',s)\big(g(x'-y')-g(x')  \big)\dd y',
\end{align*}
it follows that
\begin{align*}
\big\|\partial_{\textrm{tan}} e^{sA}g\big\|_{L^{\infty}(\mathbb{R}^{n-1})}
&\leq C\Big[g\Big]^{(\alpha)}_{\mathbb{R}^{n-1}}\int_{\partial\R_{+}}\frac{|y'|^{\alpha}}{(|y'|^{2}+s^{2})^{\frac{n}{2}}}\dd y'\\
&= \frac{C'}{s^{1-\alpha}}\Big[g\Big]^{(\alpha)}_{\mathbb{R}^{n-1}}.
\end{align*}
Thus, (2.9) and (2.10) hold.

\noindent 
(iv) For a skew-symmetric constant matrix $A=(a_{ij})$, the surface divergence of $An$ vanishes, i.e., $\D_{\partial\Omega}\ (An)=0$, in the sense that  
\begin{equation*}
\int_{\partial\Omega}An\cdot \nabla\varphi\dd{\cal{H}}=0  \quad \textrm{for}\ \varphi\in C^{1}_{c}\big(\overline{\Omega}\big).   
\end{equation*}
In fact, it follows that 
\begin{align*}
\int_{\partial\Omega}An\cdot \nabla \varphi \dd{\cal{H}}
&=\sum_{i,j}\int_{\partial\Omega}a_{ij}n^{j}\partial_{i}\varphi\dd {\cal{H}}   \\
&=\sum_{i,j}\int_{\Omega}a_{ij}\partial_{j}\partial_{i}\varphi\dd x  \\
&=\sum_{i,j}\int_{\Omega}a_{ji}\partial_{i}\partial_{j}\varphi\dd x
=-\sum_{i,j}\int_{\Omega}a_{ij}\partial_{j}\partial_{i}\varphi\dd x.
\end{align*}
The right-hand side equals zero.
\end{rems}

\vspace{5pt}

\subsection{\rm{Uniqueness of the Neumann problem}}\mbox{}\\

We prove the uniqueness of the Neumann problem (2.1) in order to prove the a priori estimate (2.8) by a blow-up argument. 

\vspace{5pt}

\begin{lem}
Let $\nabla \Phi\in L^{1}_{\textrm{loc}}(\overline{\R_{+}})$ satisfy 
\begin{equation*}
\int_{\R_{+}}\nabla \Phi\cdot \nabla \varphi \dd x=0\quad \textrm{for all}\ \varphi\in C^{1}_{c}(\overline{\R_{+}}).    \tag{2.11}
\end{equation*}
Assume that 
\begin{equation*}
\sup_{x\in \R_{+}}x_{n}^{1-\alpha}\big|\nabla \Phi(x)\big|<\infty,    \tag{2.12}
\end{equation*}
for some $\alpha \in (0,1)$. Then, $\nabla \Phi\equiv 0$.
\end{lem}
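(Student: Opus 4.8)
The plan is to show that a locally integrable vector field $\nabla\Phi$ on the half space which is weakly harmonic with homogeneous Neumann data and which decays like $x_n^{\alpha-1}$ must vanish. First I would promote the distributional equation \eqref{...}, i.e. (2.11), to genuine interior regularity: testing against $\varphi\in C_c^\infty(\R_+)$ shows $\Phi$ is (weakly, hence classically) harmonic in the open half space, so $\nabla\Phi$ is real-analytic there, and the bound (2.12) holds pointwise. The homogeneous Neumann condition built into (2.11) — which is allowed to test against $\varphi$ not vanishing on the boundary — then lets me extend $\Phi$ by even reflection across $\{x_n=0\}$: set $\tilde\Phi(x',x_n)=\Phi(x',|x_n|)$. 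The standard computation shows $\tilde\Phi$ is harmonic on all of $\R^n$ (the even reflection kills the normal derivative jump precisely because of the Neumann condition), and the gradient bound becomes $|\nabla\tilde\Phi(x)|\le C|x_n|^{\alpha-1}$, in particular $|\nabla\tilde\Phi(x)|\le C\,d(x,\{x_n=0\})^{\alpha-1}\le C|x|^{\alpha-1}$ for $|x|\ge 1$, and locally $\nabla\tilde\Phi\in L^1_{loc}$ near the hyperplane since $\alpha-1>-1$.

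Next I would run a Liouville argument on the entire harmonic function $\tilde\Phi$. Each component $\partial_j\tilde\Phi$ is harmonic on $\R^n$; away from the hyperplane it is smooth, and since $\nabla\tilde\Phi\in L^1_{loc}(\R^n)$ and is harmonic in the distributional sense across $\{x_n=0\}$ (here one checks the weak equation for $\partial_j\tilde\Phi$ directly, or invokes removability of the hyperplane for $L^1_{loc}$ harmonic functions — a set of zero capacity relative to $L^1$ is removable), Weyl's lemma gives that $\nabla\tilde\Phi$ is harmonic and smooth everywhere. Now I apply the mean value property to $\partial_j\tilde\Phi$ over balls $B(0,R)$:
\begin{equation*}
\big|\partial_j\tilde\Phi(0)\big|\le \frac{C}{R^n}\int_{B(0,R)}\big|\nabla\tilde\Phi\big|\,\dd x \le \frac{C}{R^n}\int_{B(0,R)} |x_n|^{\alpha-1}\,\dd x \le \frac{C'R^{n-1+\alpha}}{R^n}=C'R^{\alpha-1}\to 0
\end{equation*}
as $R\to\infty$, since $\alpha-1<0$. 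The same bound holds at every point (translate, and note the bound $|x_n|^{\alpha-1}$ is only worsened by a bounded factor under bounded translations, or more cleanly estimate $\int_{B(x_0,R)}|x_n|^{\alpha-1}\dd x\le CR^{n-1+\alpha}$ uniformly in $x_0$ for $R\ge 1$). Hence $\nabla\tilde\Phi\equiv 0$, so $\nabla\Phi\equiv 0$ on $\R_+$.

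The main obstacle I anticipate is the low regularity at the boundary hyperplane: justifying that the even reflection $\tilde\Phi$ is harmonic (or at least that $\nabla\tilde\Phi$ is harmonic) across $\{x_n=0\}$ rigorously from the weak formulation (2.11) alone, given only $\nabla\Phi\in L^1_{loc}$ and the one-sided gradient bound. The clean route is to verify directly that $\tilde\Phi$ satisfies $\int_{\R^n}\nabla\tilde\Phi\cdot\nabla\psi\,\dd x=0$ for all $\psi\in C_c^\infty(\R^n)$: split the integral into the two half spaces, use the substitution $x_n\mapsto -x_n$ on the lower one, and recognize the sum as the pairing of $\nabla\Phi$ against $\nabla\big(\psi(x',x_n)+\psi(x',-x_n)\big)$, a test function in $C_c^1(\overline{\R_+})$ — here the symmetrized test function has vanishing normal derivative only in the weak sense built into (2.7)'s class, so one must be slightly careful that (2.11) is stated for all $\varphi\in C_c^1(\overline{\R_+})$ without the Neumann restriction, which is exactly what makes the reflection work. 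Once that identity is in hand, Weyl's lemma and the Liouville estimate above finish the proof; the integrability $\int_{B}|x_n|^{\alpha-1}<\infty$ is what both makes $\nabla\tilde\Phi\in L^1_{loc}$ and drives the decay.
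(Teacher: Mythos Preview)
Your proposal is correct and follows essentially the same route as the paper: even reflection of $\Phi$ across $\{x_n=0\}$, verification that $\tilde\Phi$ is weakly harmonic on all of $\R$ via the symmetrized test function $\varphi(x',x_n)=\psi(x',x_n)+\psi(x',-x_n)\in C^{1}_{c}(\overline{\R_{+}})$ plugged into (2.11), Weyl's lemma, and then a Liouville argument for $\nabla\tilde\Phi$. The only cosmetic difference is the endgame: the paper notes that once $\tilde\Phi$ is smooth, $\nabla\tilde\Phi$ is globally bounded (smoothness near the hyperplane, (2.12) away from it) and decays as $|x_n|\to\infty$, so Liouville gives $\nabla\tilde\Phi\equiv 0$; your quantitative mean-value estimate using $\int_{B_R}|x_n|^{\alpha-1}\lesssim R^{n-1+\alpha}$ reaches the same conclusion.
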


\begin{proof}
We consider the even extension of $\Phi$ to $\R$, i.e.,
\begin{equation*}
\tilde{\Phi}(x',x_n)=
\begin{cases}
&\Phi(x',x_n)\quad \hspace{7pt} \textrm{for}\  x_n\geq 0,  \\
&\Phi(x',-x_n)\quad \textrm{for}\  x_n<0.
\end{cases}
\end{equation*}
Then, $\tilde{\Phi}$ is weakly harmonic in $\R$. In fact, for $\tilde{\varphi}\in C^{2}_{c}(\R)$, the function $\tilde{\Phi}$ satisfies 
\begin{align*}
\int_{\R}\tilde{\Phi}(x)\Delta \tilde{\varphi}(x)\dd x
&=\int_{0}^{\infty}\int_{\mathbb{R}^{n-1}}{\Phi}(x',x_n)\Delta \tilde{\varphi}(x',x_n)\dd x
+\int_{-\infty}^{0}\int_{\mathbb{R}^{n-1}}{\Phi}(x',-x_n)\Delta \tilde{\varphi}(x',x_n)\dd x  \\
&=\int_{0}^{\infty}\int_{\mathbb{R}^{n-1}}{\Phi}(x',x_n)\Delta{\varphi}(x',x_n)\dd x.    
\end{align*}
Since $\varphi(x',x_n)=\tilde{\varphi}(x',x_n)+\tilde{\varphi}(x',-x_n)$ is $C^{2}$ in $\overline{\R_{+}}$ and satisfies $\partial \varphi/\partial x_n=0$ on $\{x_n=0\}$, the right-hand side equals zero by (2.11). Thus, $\tilde{\Phi}\in L^{1}_{\textrm{loc}}(\R)$ is weakly harmonic in $\R$. By Weyl's lemma, $\tilde{\Phi}$ is smooth in $\mathbb{R}^{n}$. By (2.12), $\nabla \tilde{\Phi}$ is bounded in $\R$ and decays as $x_n\to\infty$. We apply the Liouville theorem and conclude that $\nabla \tilde{\Phi}\equiv 0$. 
\end{proof}

\vspace{5pt}

We next prove the uniqueness theorem for bounded and exterior domains. Note that $\nabla \Phi\in L^{p}_{\textrm{loc}}\big(\overline{\Omega}\big)$ for $1\leq p<1/(1-\alpha)$ provided that  $d_{\Omega}^{1-\alpha}\nabla \Phi\in L^{\infty}(\Omega)$. In particular, $\nabla\Phi\in L^{p}(\Omega)$ when $\Omega$ is bounded.

\vspace{5pt}

\begin{lem}
Let $\Omega$ be a bounded domain in $\R$ with $C^{2}$-boundary. Let $\nabla \Phi\in L^{1}_{\textrm{loc}}(\overline{\Omega})$ satisfy 
\begin{equation*}
\int_{\Omega}\nabla \Phi\cdot \nabla \varphi \dd x=0\quad   \textrm{for all}\ \varphi\in C^{1}\big(\overline{\Omega}\big). \tag{2.13}
\end{equation*}
Assume that 
\begin{equation*}
\sup_{x\in \Omega}d_{\Omega}^{1-\alpha}(x)\big|\nabla \Phi(x)\big|<\infty,     \tag{2.14}
\end{equation*}
for some $\alpha \in (0,1)$. Then, $\nabla \Phi\equiv 0$.
\end{lem}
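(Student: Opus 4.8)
The plan is to show that the weak-harmonic condition \eqref{eq-2.13} forces $\Phi$ to be (a.e. equal to) a genuine harmonic function on $\Omega$ with vanishing Neumann data, and then to run an energy argument against the singularity bound \eqref{eq-2.14}. First I would observe that \eqref{eq-2.13} restricted to $\varphi\in C^\infty_c(\Omega)$ says $\Delta\Phi=0$ in the interior in the distributional sense, so by Weyl's lemma $\Phi$ coincides a.e.\ with a function that is $C^\infty$ in $\Omega$; I will identify $\Phi$ with this smooth representative. The full test-function class $C^1(\overline\Omega)$ then records the homogeneous Neumann boundary condition $\partial\Phi/\partial n=0$ on $\partial\Omega$, in the weak sense already built into \eqref{eq-2.13} (this is the $A\equiv 0$ case of \eqref{eq-2.7}). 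The goal is then $\nabla\Phi\equiv 0$, equivalently $\Phi=$ const.

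Next I would set up the energy estimate. Since $d_\Omega^{1-\alpha}\nabla\Phi\in L^\infty(\Omega)$ with $\alpha\in(0,1)$ and $\Omega$ is bounded with $C^2$ boundary, we have $\nabla\Phi\in L^p(\Omega)$ for every $p<1/(1-\alpha)$, as noted in the excerpt; in particular $\nabla\Phi\in L^1(\Omega)$, and $1/(1-\alpha)>1$ so there is a little integrability to spare. The natural move is to integrate $|\nabla\Phi|^2$ over the truncated subdomain $\Omega_\delta=\{x\in\Omega: d_\Omega(x)>\delta\}$ and integrate by parts:
\begin{equation*}
\int_{\Omega_\delta}|\nabla\Phi|^2\,\dd x
=\int_{\partial\Omega_\delta}\Phi\,\frac{\partial\Phi}{\partial n}\,\dd{\cal H}
-\int_{\Omega_\delta}\Phi\,\Delta\Phi\,\dd x
=\int_{\partial\Omega_\delta}\Phi\,\frac{\partial\Phi}{\partial n}\,\dd{\cal H},
\end{equation*}
the last step using interior harmonicity. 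For small $\delta$, $\partial\Omega_\delta$ is a $C^2$ hypersurface at distance $\delta$ from $\partial\Omega$, and on it $|\nabla\Phi|\leq C\delta^{-(1-\alpha)}$ while its surface measure is bounded uniformly in $\delta$ (comparable to $\mathcal H^{n-1}(\partial\Omega)$, by the tubular neighborhood / coarea structure for a $C^2$ domain). One also needs to control $\Phi$ itself on $\partial\Omega_\delta$; since $\nabla\Phi\in L^p$ near the boundary with $p>1$ one gets, by integrating along normal rays from a fixed interior reference set, that $\Phi$ has a boundary trace and in fact $\|\Phi\|_{L^\infty(\partial\Omega_\delta)}$ is bounded as $\delta\to0$ (after normalizing by an additive constant). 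Hence the boundary integral is $O(\delta^{-(1-\alpha)})\cdot O(1)\cdot O(1)$, which does \emph{not} obviously vanish --- so the crude bound is not enough and this is the main obstacle.

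To close the gap I would instead exploit the weak Neumann condition directly rather than the pointwise gradient bound on $\partial\Omega_\delta$. The cleaner route: pick a cutoff $\chi_\delta\in C^1(\overline\Omega)$ with $\chi_\delta\equiv1$ on $\Omega_{2\delta}$, $\chi_\delta\equiv0$ near $\partial\Omega$, $|\nabla\chi_\delta|\leq C/\delta$, supported in the shell $\{d_\Omega<2\delta\}$, and test \eqref{eq-2.13} with $\varphi=\chi_\delta(\Phi-c)$ for a suitable constant $c$ (legitimate after the same truncation-and-approximation since $\Phi$ is only $C^\infty$ in the open set; one first tests with $\chi_\delta\eta_\varepsilon(\Phi-c)$, $\eta_\varepsilon$ a smooth truncation of the values of $\Phi$, then lets $\varepsilon\to0$ using $\nabla\Phi\in L^1$). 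This gives
\begin{equation*}
\int_{\Omega}\chi_\delta|\nabla\Phi|^2\,\dd x
=-\int_{\Omega}(\Phi-c)\,\nabla\chi_\delta\cdot\nabla\Phi\,\dd x,
\end{equation*}
and the right side is bounded by $\frac{C}{\delta}\int_{\{d_\Omega<2\delta\}}|\Phi-c|\,|\nabla\Phi|\,\dd x$. Using H\"older with the exponent $p=1/(1-\alpha)>1$ for $\nabla\Phi$ and $p'=1/\alpha$ for $(\Phi-c)$, together with the shell volume $|\{d_\Omega<2\delta\}|\leq C\delta$ and the uniform boundedness of $\Phi-c$ near $\partial\Omega$ (choose $c$ as a limiting boundary value, so $\|(\Phi-c)\|_{L^{p'}(\{d_\Omega<2\delta\})}\leq C\delta^{1/p'}\cdot o(1)$ --- here the $o(1)$ comes from $\Phi-c$ being \emph{small}, not just bounded, near $\partial\Omega$, which follows from $\nabla\Phi\in L^p$, $p>1$, hence $\Phi$ is uniformly continuous up to the boundary with modulus $\to 0$), and $\|\nabla\Phi\|_{L^p(\{d_\Omega<2\delta\})}\to 0$ by absolute continuity of the integral, one finds the right-hand side $\to 0$ as $\delta\to0$. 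By monotone convergence the left side tends to $\int_\Omega|\nabla\Phi|^2\,\dd x$, so $\nabla\Phi\equiv0$ on $\Omega$. (If it is cleaner, one can fold the "$\Phi$ extends continuously to $\overline\Omega$ with a well-defined boundary value" claim into a preliminary lemma using the $C^2$ regularity of $\partial\Omega$ and local boundary flattening, reducing to the corresponding statement for the half-space that is implicit in Lemma stated above.) The step I expect to fight with is precisely this boundary-shell estimate: making rigorous that $\Phi$ (suitably normalized) is not merely bounded but has vanishing oscillation as $d_\Omega\to0$, so that the factor $\delta^{-1}\cdot\delta^{1/p'}\cdot\delta^{1/p}=\delta^{-1}\cdot\delta=1$ is beaten by the $o(1)$; everything else is routine.
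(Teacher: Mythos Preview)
Your energy-cutoff approach does not close, and the gap you flag at the end is real. There are two concrete problems. First, your ``$o(1)$'' for $\Phi-c$ near $\partial\Omega$ requires the boundary trace of $\Phi$ to be a single constant~$c$; but $\Phi$ harmonic with weak Neumann data zero having constant boundary trace is exactly the statement $\nabla\Phi\equiv 0$ you are trying to prove, so this is circular. Second, your exponent bookkeeping is off: under the bound $|\nabla\Phi|\le C d_\Omega^{-(1-\alpha)}$ and with the coarea formula on the shell $\{d_\Omega<2\delta\}$, one has $\|\nabla\Phi\|_{L^p(\text{shell})}\sim \delta^{1/p-(1-\alpha)}$, not $\delta^{1/p}$. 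Feeding this into your H\"older step gives $\delta^{-1}\cdot\delta^{1/p'}\cdot\delta^{1/p-(1-\alpha)}=\delta^{-(1-\alpha)}\to\infty$, so the crude bound blows up rather than balances. Even granting the (unjustified) pointwise estimate $|\Phi-c|\le C d_\Omega^\alpha$, the same computation gives $\delta^{2\alpha-1}$, which only tends to zero for $\alpha>1/2$.

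The paper avoids all of this with a short duality argument. For arbitrary $g\in C^\infty_c(\Omega)$, solve the Neumann problem $\Delta\varphi=\operatorname{div} g$ in $\Omega$, $\partial\varphi/\partial n=0$ on $\partial\Omega$; on a $C^2$ bounded domain this has a solution $\varphi\in W^{2,q}(\Omega)$ for all $q\in(1,\infty)$, in particular $\varphi\in C^1(\overline\Omega)$. Then, since $\nabla\Phi\in L^p(\Omega)$ for some $p>1$ by (2.14), one may integrate by parts and invoke (2.13) directly:
\[
\int_\Omega \Phi\,\operatorname{div} g\,\dd x=\int_\Omega \Phi\,\Delta\varphi\,\dd x=-\int_\Omega \nabla\Phi\cdot\nabla\varphi\,\dd x=0.
\]
Since $g\in C^\infty_c(\Omega)$ is arbitrary, $\nabla\Phi\equiv 0$. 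The point is that the test function $\varphi$ is produced by elliptic regularity rather than by self-testing with $\Phi$, so no shell estimate is needed and the full range $\alpha\in(0,1)$ comes for free.
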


\begin{proof}
We consider the Neumann problem,
\begin{align*}
\Delta \varphi=\D\ g \quad \textrm{in}\ \Omega,  \\
\frac{\partial \varphi}{\partial n}=0\quad \textrm{on}\ \partial\Omega.
\end{align*}
For $g\in C^{\infty}_{c}(\Omega)$, there exists a solution $\varphi\in W^{2,q}(\Omega)$ for $q\in (1,\infty)$ (e.g., \cite[Teor. 4.1]{LMp}). In particular, $\varphi$ is in $C^{1}\big(\overline{\Omega}\big)$. Since $\nabla \Phi\in L^{p}(\Omega)$ for $1\leq p<1/(1-\alpha)$ by (2.14), it follows that
\begin{equation*}
\int_{\Omega}\Phi\D\ g\dd x
=\int_{\Omega}\Phi\Delta \varphi\dd x 
=-\int_{\Omega}\nabla \Phi\cdot \nabla \varphi\dd x=0.
\end{equation*} 
We proved $\nabla \Phi\equiv 0$. 
\end{proof}

\vspace{5pt}

\begin{lem}
Let $\Omega$ be an exterior domain in $\R$, $n\geq 2$, with $C^{2}$-boundary. Let $\nabla \Phi\in L^{1}_{\textrm{loc}}(\overline{\Omega})$ satisfy 
\begin{equation*}
\int_{\Omega}\nabla \Phi\cdot \nabla \varphi \dd x=0\quad\textrm{for all}\ \varphi\in C^{1}_{c}\big(\overline{\Omega}\big).  \tag{2.15}
\end{equation*}
Assume that 
\begin{equation*}
\sup_{x\in \Omega}d_{\Omega}^{1-\alpha}(x)\big|\nabla \Phi(x)\big|<\infty,    \tag{2.16}
\end{equation*}
for some $\alpha \in (0,1)$. Then, $\nabla \Phi\equiv 0$.
\end{lem}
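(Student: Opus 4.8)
The plan is to reduce the exterior-domain case to the two cases already established: the half-space uniqueness (Lemma 2.5) and the bounded-domain uniqueness (Lemma 2.6). The difficulty compared with the bounded case is that the Neumann problem
\[
\Delta\varphi=\D\,g\quad\text{in }\Omega,\qquad \frac{\partial\varphi}{\partial n}=0\quad\text{on }\partial\Omega,
\]
on an unbounded domain need not have a solution $\varphi$ lying in $C^1(\overline\Omega)$ for every $g\in C^\infty_c(\Omega)$; the natural solution decays only like $|x|^{1-n}$ (for its gradient) and is not an admissible test function of the form required by (2.15), whose test class is $C^1_c(\overline\Omega)$. So one cannot simply mimic the proof of Lemma 2.6. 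Instead I would argue by a cutoff/interior-regularity scheme: first upgrade $\nabla\Phi\in L^1_{\mathrm{loc}}$ to a genuine smooth harmonic function away from $\partial\Omega$, then use the boundary condition (2.15) together with the decay (2.16) and a localization to conclude $\nabla\Phi\equiv 0$.

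Concretely, the key steps are as follows. First, by taking test functions $\varphi\in C^1_c(\Omega)$ in (2.15) one sees $\Phi$ is weakly harmonic in the open set $\Omega$, hence smooth there by Weyl's lemma, and $\nabla\Phi$ solves $\Delta(\nabla\Phi)=0$ componentwise. Second, near $\partial\Omega$ the full identity (2.15) with test functions not vanishing on the boundary encodes the homogeneous Neumann condition $\partial\Phi/\partial n=0$ weakly; combined with $C^2$-regularity of $\partial\Omega$ and the bound (2.16), a standard even-reflection across the boundary in boundary normal coordinates (as in the proof of Lemma 2.5) extends $\nabla\Phi$ to a harmonic function in a full neighborhood of $\partial\Omega$, so in fact $\nabla\Phi$ extends harmonically across $\partial\Omega$ into a domain containing $\{\,|x|>R\,\}$ for $R$ large. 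Third, on this exterior region $\nabla\Phi$ is harmonic and bounded (by (2.16), since $d_\Omega(x)\to\infty$ there, each component of $\nabla\Phi$ in fact tends to $0$ as $|x|\to\infty$). Fourth, a harmonic function on $\{|x|>R\}$ that tends to $0$ at infinity is, by the removable-singularity / expansion theorem for harmonic functions, $O(|x|^{2-n})$ (resp. $O(|x|^{-1})$ times a constant for $n=2$, but the vanishing at infinity forces decay in $n=2$ as well after ruling out the logarithmic term); in particular $\nabla\Phi\in L^2$ near infinity. Fifth, with this decay one legitimately tests (2.15) against a suitable family of cutoffs $\varphi_k(x)=\eta(x/k)\psi(x)$ where $\psi\in C^1(\overline\Omega)$ is chosen to solve the Neumann problem for a given $g\in C^\infty_c(\Omega)$ on a fixed large ball intersected with $\Omega$; passing $k\to\infty$, the boundary and far-field contributions vanish by the decay, leaving $\int_\Omega\Phi\,\D g\,\dd x=0$ for all $g\in C^\infty_c(\Omega)$, whence $\nabla\Phi\equiv 0$.

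I expect the main obstacle to be the third-through-fifth steps: making the passage to infinity rigorous. The cleanest route is probably to combine the reflection argument (which gives a harmonic extension of $\nabla\Phi$ on an exterior neighborhood of $\partial\Omega$) with the classical fact that an entire-exterior harmonic function vanishing at infinity has an expansion in decaying spherical harmonics, forcing $|\nabla\Phi(x)|\le C|x|^{1-n}$ and $|\Phi(x)-c_\infty|\le C|x|^{2-n}$ for $n\ge3$ (with the $n=2$ case handled by noting the decay of $\nabla\Phi$ rules out the $\log$ term, so $\Phi$ is bounded). Once this decay is in hand, one writes $\Omega=\Omega_R\cup(\Omega\setminus \overline{B_R})$, solves the Neumann problem on the bounded piece $\Omega_R=\Omega\cap B_R$ as in Lemma 2.6, and estimates the error terms supported near $\partial B_R$ using the decay; alternatively one directly runs the cutoff argument of the previous paragraph. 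The technical care needed is in the $n=2$ case and in verifying that the reflected function is genuinely weakly (hence strongly) harmonic across $\partial\Omega$ given only (2.15) and (2.16) — this is where $C^2$-regularity of the boundary and the precise form of the weak formulation are used.
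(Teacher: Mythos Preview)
Your second step is wrong: even reflection across a curved $C^2$-boundary in normal coordinates does \emph{not} produce a harmonic extension. The trick in Lemma~2.5 works only because the Laplacian on $\R_+$ commutes with $x_n\mapsto -x_n$; for a curved boundary the reflected function satisfies a variable-coefficient elliptic equation, not $\Delta u=0$. This step is also unnecessary: since $\Omega$ is exterior, $\{|x|>R\}\subset\Omega$ already for large $R$, and $\Phi$ is harmonic there by Weyl's lemma applied with $\varphi\in C^1_c(\Omega)$. So the geometry near $\partial\Omega$ plays no role in the decay analysis at infinity. Your decay-upgrade route (harmonic on an exterior region, sublinear growth, hence $\nabla\Phi=O(|x|^{1-n})$) can be made to work for $n\ge3$, but in $n=2$ the sublinear growth $|\Phi|\le C|x|^\alpha$ does not by itself exclude a $b_0\log|x|$ term in $\Phi$, so the claimed $O(|x|^{-1})$ decay of $\nabla\Phi$ is not justified; you would need an additional flux argument (test (2.15) against a radial cutoff) to kill $b_0$. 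Your fifth step is also imprecise: solving the Neumann problem on a bounded piece $\Omega_R$ introduces an artificial boundary condition on $\partial B_R$ that does not match, so the cutoff must be applied to the exterior-domain Neumann solution, not a bounded-domain one.

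The paper's argument is much shorter and avoids all of this. It does not try to upgrade the decay of $\nabla\Phi$; it uses only the crude growth bound $|\Phi(x)|\le C_1|x|^\alpha+C_2$ obtained by integrating (2.16) along rays. One then takes the solution $\varphi$ of $\Delta\varphi=\D g$, $\partial\varphi/\partial n=0$ on $\partial\Omega$, on the \emph{full} exterior domain, for which $\nabla\varphi\in L^q(\Omega)$ for every $q\in(1,\infty)$ is known. Setting $\varphi_m=\varphi\theta_m$ with $\theta_m(x)=\theta(|x|/m)$ and substituting into (2.15), the commutator terms $\int\Phi\,\nabla\varphi\cdot\nabla\theta_m$ and $\int\Phi\,\varphi\,\Delta\theta_m$ are bounded by $Cm^{-(1-\alpha-n/q')}\|\nabla\varphi\|_{L^q(\{m<|x|<2m\})}$ (the second via Poincar\'e on the annulus). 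Choosing $q\le n/(n-1+\alpha)$ makes the exponent nonnegative and the $L^q$-norm on the annulus tends to zero, so both terms vanish as $m\to\infty$ and one gets $\int_\Omega\Phi\,\D g\,\dd x=0$ for all $g\in C^\infty_c(\Omega)$. No reflection, no spherical-harmonic expansion, and no dimension split are needed.
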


\begin{proof}
We first estimate $\Phi(x)$ as $|x|\to \infty$ by using (2.16). We may assume $0\in \Omega^{c}$. We take a constant $R_{0}>\textrm{diam}\ \Omega^{c}$ and observe that $|x|\leq 2d_{\Omega}(x)$ for $|x|\geq 2R_0$. It follows from (2.16) that   
\begin{equation*}
\sup_{|x|\geq 2R_0}|x|^{1-\alpha}\big|\nabla \Phi(x)\big|<\infty.    
\end{equation*}    
By a fundamental calculation, we estimate 
\begin{equation*}
\big|\Phi(x)\big|\leq C_1|x|^{\alpha}+C_2\quad \textrm{for}\ |x|\geq 2R_0,      \tag{2.17}
\end{equation*}
with some constants $C_1$ and $C_2$ independent of $x$.

We consider the Neumann problem,
\begin{align*}
\Delta \varphi=\D\ g \quad \textrm{in}\ \Omega,  \\
\frac{\partial \varphi}{\partial n}=0\quad \textrm{on}\ \partial\Omega.
\end{align*}
For $g\in C^{\infty}_{c}(\Omega)$, there exists a solution $\varphi\in L^{q}_{\textrm{loc}}(\overline{\Omega})$ satisfying $\nabla \varphi\in L^{q}$ for $q\in (1,\infty)$ \cite{Gal}. By the elliptic regularity theory \cite{LMp}, we have $\varphi\in W^{2,q}_{\textrm{loc}}(\overline{\Omega})$. In particular, $\varphi$ is $C^{1}$ in $\overline{\Omega}$. In order to substitute $\varphi$ into (2.15), we cutoff the function $\varphi$ as $|x|\to\infty$. Let $\theta\in C^{\infty}_{c}[0,\infty)$ be a smooth cut-off function satisfying $\theta\equiv 1$ in $[0,1]$ and $\theta\equiv 0$ in $[2,\infty)$. Set $\theta_{m}(x)=\theta(|x|/m)$ for $m\geq R_{0}$ so that $\theta_{m}\equiv 1$ for $|x|\leq m$, $\theta_{m}\equiv 0$ for $|x|\geq 2m$ and $\textrm{spt}\ \nabla\theta_{m}\subset \overline{D_{m}}$ for $D_{m}=\{m<|x|<2m\}$. Since $\varphi_{m}=\varphi\theta_{m}\in C^{1}_{c}(\overline{\Omega})\cap W^{2,q}_{\textrm{loc}}(\overline{\Omega})$ satisfies $\partial\varphi_{m}/\partial n=0$ on $\partial\Omega$ and
\begin{align*}
\Delta \varphi_{m}
&=\Delta \varphi\theta_{m}+2\nabla\varphi\cdot\nabla \theta_{m}+\varphi\Delta\theta_{m}  \\
&=\D\ g_m-g\cdot\nabla \theta_{m}+2\nabla\varphi\cdot\nabla \theta_{m}+\varphi\Delta\theta_{m}
\end{align*}
for $g_m=g\theta_{m}$, it follows from (2.15) that
\begin{align*}
\int_{\Omega}\Phi\D\ g_m\dd x
&=\int_{\Omega}\Phi\left(\Delta \varphi_{m}+g\cdot\nabla \theta_{m}-2\nabla\varphi\cdot\nabla \theta_{m}-\varphi\Delta\theta_{m}\right)\dd x\\
&=\int_{\Omega}\Phi\left(g\cdot\nabla \theta_{m}-2\nabla\varphi\cdot\nabla \theta_{m}-\varphi\Delta\theta_{m}\right)\dd x
=:I_m+II_m+III_m.
\end{align*}
Since $g$ is compactly supported in  $\Omega$, $g_m\equiv g$ and $I_m\equiv 0$ for sufficiently large $m\geq R_0$. We shall show that $II_m,III_m\to 0$ as $m\to\infty$. By the cut-off function estimate $||\nabla^{k}\theta_{m}||_{\infty}\leq C/m^{|k|}$ for $|k|\geq 0$ and (2.17), it follows that 
\begin{equation*}
\big|II_m\big|\leq \frac{C}{m^{1-\alpha-\frac{n}{q'}}}\big\|\nabla \varphi\big\|_{L^{q}(D_m)},
\end{equation*}
with the constant $C$, independent of $m\geq 2R_0$, where $1/q+1/q'=1$.

By a similar way, we estimate $III_{m}$. By the Poincar\'e inequality \cite[5.8.1]{E}, we estimate 
\begin{equation*}
\big\|\varphi-(\varphi)\big\|_{L^{q}(D_m)}\leq mC\big\|\nabla \varphi\big\|_{L^{q}(D_m)}
\end{equation*}   
with some constant $C$ independent of $m$, where $(\varphi)$ denotes the average of $\varphi$ in $D_m$. Since $\Delta \theta_{m}$ is supported in $\overline{D_m}$, it follows that 
\begin{equation*}
\big|III_{m}\big|=\left|\int_{\Omega}\Phi\big(\varphi-(\varphi)\big)\Delta \theta_{m}\dd x\right|\leq \frac{C}{m^{1-\alpha-\frac{n}{q'}}}\big\|\nabla \varphi\big\|_{L^{q}(D_m)}.
\end{equation*}
The function $\nabla\varphi$ is $L^{q}$-integrable in $\Omega$ for all $q\in (1,\infty)$. In particular, $\nabla\varphi\in L^{q}$ for $q\in (1,n/(n-1+\alpha)]$ and $1-\alpha-n/q'\geq 0$. Thus, $|II_m|+|III_m|\to0$ as $m\to\infty$. We proved $\nabla\Phi\equiv 0$. The proof is now complete.
\end{proof}

\vspace{5pt}

In the next subsection, we apply the following extension theorem of harmonic functions in order to prove the a priori estimate (2.8) for exterior domains by a blow-up argument.

\vspace{5pt}

\begin{prop}
Let $\Phi$ be a harmonic function in $\R\backslash \{0\}$, $n\geq 2$. 
Let $\alpha\in (0,1)$. Assume that 
\begin{equation*}
\sup\left\{|x|^{1-\alpha}\big|\nabla \Phi(x)\big|\ \Big|\ x\in B_{0}(1),\ x\neq 0  \right\}<\infty.
\end{equation*}
Then, $\Phi$ is extendable to a harmonic function in $\R$.
\end{prop}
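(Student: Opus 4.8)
The plan is to show that a harmonic function on the punctured ball with the gradient bound $|\nabla\Phi(x)|\le C|x|^{\alpha-1}$ has a removable singularity at the origin. First I would reduce to controlling $\Phi$ itself: integrating the gradient bound along rays from a fixed reference point gives $|\Phi(x)|\le C_1|x|^{\alpha}+C_2$ on $B_0(1)\setminus\{0\}$ (this is the same elementary computation used in the proof of Lemma~2.8, (2.17), just near $0$ instead of $\infty$). In particular $\Phi$ is bounded near $0$, hence $\Phi\in L^\infty_{\mathrm{loc}}(B_0(1))$, so the point singularity is removable by the classical removable-singularity theorem for bounded harmonic functions (alternatively: $\Phi$ is $L^1_{\mathrm{loc}}$ near $0$, and a single point has zero capacity, so $\Phi$ extends harmonically).

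An alternative, self-contained route avoiding citation of a removability theorem: use the bound $|\Phi(x)|\le C_1|x|^\alpha+C_2$ to show that the distribution $\Delta\tilde\Phi$ on $B_0(1)$, where $\tilde\Phi$ is the locally bounded (hence $L^1_{\mathrm{loc}}$) function agreeing with $\Phi$ away from $0$, is supported at $\{0\}$; thus $\Delta\tilde\Phi=\sum_{|\beta|\le N}c_\beta\partial^\beta\delta_0$ for finitely many constants. The bound $|\tilde\Phi(x)|\le C_1|x|^\alpha+C_2$ forces the growth of any such nonzero combination (which behaves like $|x|^{2-n}$ or worse for $n\ge 3$, like $\log|x|$ for $n=2$, and like $|x|^{2-n-|\beta|}$ for derivatives) to be incompatible: since $\alpha<1<n$ all the singular fundamental-solution contributions grow strictly faster than $|x|^\alpha$, so every $c_\beta=0$. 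Therefore $\Delta\tilde\Phi=0$ in $\mathcal D'(B_0(1))$, and Weyl's lemma upgrades $\tilde\Phi$ to a genuine harmonic function extending $\Phi$.

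The main obstacle — really the only nontrivial point — is getting the pointwise bound on $\Phi$ from the bound on $\nabla\Phi$ with the correct dependence near the singularity: one must integrate radially toward $0$ and check that the $|x|^{\alpha-1}$ singularity of $\nabla\Phi$ is integrable (it is, since $\alpha>0$), producing the $|x|^\alpha$ growth rather than something worse; the normalization constant from a fixed sphere $|x|=1/2$ contributes the additive $C_2$. After that, everything is standard potential theory. I would write it as: fix $x_0$ with $|x_0|=1/2$, for $x\ne 0$ write $\Phi(x)-\Phi(x_0/|x_0|\cdot|x|)$ as a radial integral bounded by $C\int_{|x|}^{1/2}r^{\alpha-1}\,\dd r\le C'|x|^{\alpha}$ up to constants, combine with the bound on $\Phi$ over the sphere of radius $|x|$ (controlled by continuity of $\Phi$ on that compact sphere plus the tangential part of the gradient), conclude $\Phi\in L^\infty_{\mathrm{loc}}$, and invoke removability.
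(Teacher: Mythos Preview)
Your argument is correct and more self-contained than the paper's. The paper does not prove the proposition directly; it cites \cite[Lemma A.1]{AG2}, where the case $n\ge 3$ is handled (even under the weaker hypothesis $\alpha=0$) by a cut-off function argument, and then remarks that for $n=2$ the stronger hypothesis $\alpha>0$ lets that same cut-off argument go through without an auxiliary spherical-mean condition that is needed there when $\alpha=0$. Your route---integrate the gradient bound radially to see that $\Phi$ is bounded near the origin, then invoke the classical removable-singularity theorem for bounded harmonic functions (or, in your alternative, show that $\Delta\tilde\Phi$ is a point-supported distribution forced to vanish by the growth bound and apply Weyl's lemma)---is textbook potential theory and handles all $n\ge 2$ uniformly without splitting into cases.

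Two cosmetic slips in your sketch, neither affecting correctness: the radial integral $\int_{|x|}^{1/2} r^{\alpha-1}\,\dd r = \alpha^{-1}\big((1/2)^\alpha - |x|^\alpha\big)$ is bounded by a constant, not by $C'|x|^{\alpha}$, and a constant bound is exactly what you need; and the difference $\Phi(x)-\Phi(x_0/|x_0|\cdot|x|)$ you wrote is between two points on the \emph{same} sphere of radius $|x|$, so it is not a radial integral---the clean write-up is to integrate radially from $x$ outward to $x/|x|\cdot(1/2)$ and then bound $\Phi$ on the fixed compact sphere $\{|y|=1/2\}$ by continuity.
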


\begin{proof}
The assertion is proved for $n\geq 3$ under the weaker assumption $\alpha=0$ in \cite[Lemma A.1]{AG2}. When $n=2$, $\Phi=\log{|x|}$ satisfies $\Phi=O(|x|^{-1})$ as $|x|\to 0$ and the statement for $\alpha=0$ fails. It is proved in \cite{AG2} by a cut-off function argument that a harmonic function $\Phi$ in $\mathbb{R}^{2}\backslash \{0\}$ is extendable to a harmonic function in $\mathbb{R}^{2}$ if $\nabla \Phi=O(|x|^{-1})$ and the spherical mean of $\Phi$ over the sphere is independent of $r>0$, i.e., $\fint_{\partial B_{0}(r)}\Phi\dd \mathcal{H}=\textrm{constant}$ for $r<1$. Under the stronger assumption $\nabla \Phi=O(|x|^{-\alpha})$, the spherical mean condition is removable and the cut-off function argument applies to prove that $\Phi$ is extendable to a harmonic function in $\mathbb{R}^{2}$ without modification. 
\end{proof}

\vspace{5pt}

\subsection{\rm{Blow-up arguments}}\mbox{}\\

Since bounded and exterior domains of class $C^{3}$ are admissible for $\alpha=0$ as in Remark 2.2, we prove the a priori estimate (2.8) for $\alpha\in (0,1)$. 

\vspace{5pt}

\begin{thm}
A bounded domain of class $C^{3}$ is strongly admissible.
\end{thm}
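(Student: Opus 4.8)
The plan is to prove Theorem 2.9 by contradiction via a blow-up argument, exactly parallel to the $\alpha=0$ case treated in \cite{AG1} but with the distance weight $d_\Omega^{1-\alpha}$ and the H\"older seminorm $[A]^{(\alpha)}_\Omega$ replacing $d_\Omega$ and $\|A\|_{L^\infty(\partial\Omega)}$. Suppose (2.8) fails for some $\alpha\in(0,1)$. Then there exist skew-symmetric $A_m\in C^\alpha_{\mathrm{sk}}(\overline\Omega)$ and solutions $\nabla\Phi_m$ of (2.1) of type $\alpha$ with
\begin{equation*}
\sup_{x\in\Omega}d_\Omega^{1-\alpha}(x)\big|\nabla\Phi_m(x)\big|>m\,\big[A_m\big]^{(\alpha)}_\Omega .
\end{equation*}
Pick $x_m\in\Omega$ nearly attaining the supremum, say $d_\Omega^{1-\alpha}(x_m)|\nabla\Phi_m(x_m)|\geq \tfrac12 M_m$ where $M_m$ is the supremum, normalize $\Phi_m$ by dividing by $M_m$ (and replace $A_m$ accordingly), so that the normalized quantities satisfy $\sup_x d_\Omega^{1-\alpha}|\nabla\Phi_m|\leq 1$, $d_\Omega^{1-\alpha}(x_m)|\nabla\Phi_m(x_m)|\geq\tfrac12$, and $[A_m]^{(\alpha)}_\Omega<1/m\to 0$. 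Set $d_m=d_\Omega(x_m)$ and rescale: $\psi_m(y)=d_m^{-\alpha}\Phi_m(x_m+d_m y)$, $B_m(y)=d_m^{-\alpha}\big(A_m(x_m+d_m y)-A_m(x_m)\big)$ on the rescaled domain $\Omega_m=(\Omega-x_m)/d_m$. Subtracting the constant matrix $A_m(x_m)$ is harmless by Remark 2.4(iv). The weighted bounds are scale invariant, so $\sup_y d_{\Omega_m}^{1-\alpha}(y)|\nabla\psi_m(y)|\leq 1$, $|\nabla\psi_m(0)|\geq\tfrac12$ (since $d_{\Omega_m}(0)=1$), and $[B_m]^{(\alpha)}_{\Omega_m}\to0$; also $B_m(0)=0$, so $B_m\to0$ locally uniformly on $\overline{\Omega_m}$.

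Next I would analyze the limit of the domains. Since $\Omega$ is a bounded $C^3$-domain, it is uniformly $C^3$ of some type $\alpha',\beta',K'$; the rescaled domains $\Omega_m$ either converge (in the local $C^3$ graph sense, after passing to a subsequence) to a half space $\R_+$ (possibly translated so that $0$ sits at distance $1$ from the boundary) when $d_m\to 0$, or to the whole space $\R$ when $d_m$ stays bounded below — in the bounded case $d_m$ is bounded above by the diameter, so one of these two alternatives holds up to subsequence. Interior elliptic estimates for the harmonic functions $\psi_m$ (recall $\Delta\psi_m=0$ in $\Omega_m$), together with the weighted gradient bound, give uniform local $C^{k}$ bounds away from the boundary; near the boundary one uses the $C^{2,\alpha}$-up-to-boundary Schauder estimate for the Neumann problem with boundary data $\mathrm{div}_{\partial\Omega_m}(B_m n_{\Omega_m})$, whose seminorm is controlled by $[B_m]^{(\alpha)}$ and the $C^3$ bounds on the boundary charts, so $\nabla\psi_m$ has a uniformly bounded $C^\alpha$-norm on compact subsets of $\overline{\Omega_m}$. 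By Arzel\`a--Ascoli and a diagonal argument, along a subsequence $\nabla\psi_m\to\nabla\psi$ locally uniformly on the limit domain $\overline{\Omega_\infty}$ ($\Omega_\infty=\R$ or $\R_+$), with $|\nabla\psi(0)|\geq\tfrac12$ and $\sup d_{\Omega_\infty}^{1-\alpha}|\nabla\psi|\leq 1$. Passing to the limit in the weak formulation (2.7) — the boundary term vanishes because $B_m\to0$ uniformly on compacts and the boundary measures converge — shows $\nabla\psi$ solves the homogeneous Neumann problem: $\int_{\Omega_\infty}\nabla\psi\cdot\nabla\varphi\,\dd x=0$ for all $\varphi\in C^1_c(\overline{\Omega_\infty})$, with the weighted bound (2.12)/(2.14).

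Finally I would invoke the uniqueness lemmas: if $\Omega_\infty=\R_+$, Lemma 2.5 gives $\nabla\psi\equiv0$; if $\Omega_\infty=\R$, $\psi$ is (after Weyl) a harmonic function with $\sup_x |x|^{1-\alpha}|\nabla\psi(x)|<\infty$ wait — one needs care: in the whole-space case the weight $d_{\Omega_\infty}$ is infinite, so instead the bound $\sup d_{\Omega_m}^{1-\alpha}|\nabla\psi_m|\leq1$ with $d_{\Omega_m}(y)\to\infty$ forces $\nabla\psi\equiv0$ directly; alternatively $\psi$ is entire harmonic with $\nabla\psi$ bounded, hence by Liouville $\nabla\psi$ is constant, and combined with any decay it vanishes. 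Either way $\nabla\psi\equiv0$, contradicting $|\nabla\psi(0)|\geq\tfrac12$. This contradiction establishes (2.8) for every $\alpha\in(0,1)$, and since bounded $C^3$-domains are admissible for $\alpha=0$ (Remark 2.2), $\Omega$ is strongly admissible. The main obstacle is the boundary compactness step: one must upgrade the scale-invariant weighted gradient bound to a \emph{uniform} $C^\alpha$-estimate for $\nabla\psi_m$ up to the boundary on compact sets, uniformly in $m$, which requires the $C^3$-regularity of $\partial\Omega$ (to control the rescaled boundary charts and the surface divergence operator) and a localized Schauder estimate for the Neumann problem that is insensitive to the growth of the domain; handling the two alternatives for $\Omega_\infty$ uniformly and verifying the convergence of the boundary integrals in (2.7) are the delicate points, but they proceed exactly as in \cite{AG1}, \cite{AG2}.
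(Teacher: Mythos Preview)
Your overall blow-up strategy matches the paper, but there is a genuine error in your handling of the case where $d_m=d_\Omega(x_m)$ stays bounded below. You assert that the rescaled domains $\Omega_m=(\Omega-x_m)/d_m$ then converge to $\R$ and that $d_{\Omega_m}(y)\to\infty$. This is false for a \emph{bounded} domain: if $d_m\geq\delta>0$ then $\operatorname{diam}\Omega_m\leq\operatorname{diam}\Omega/\delta$ is uniformly bounded, so $\Omega_m$ cannot expand to the whole space, and $d_{\Omega_m}(y)\leq\operatorname{diam}\Omega/\delta$ stays bounded. Your Liouville-type conclusion $\nabla\psi\equiv 0$ in this case has no support. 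The paper avoids this by splitting differently: it takes $x_m\to x_\infty\in\overline\Omega$ and when $x_\infty\in\Omega$ performs \emph{no rescaling at all}, working directly in the fixed bounded domain $\Omega$ and invoking the bounded-domain uniqueness Lemma~2.6 for the limit $\nabla\Phi$. Only when $x_\infty\in\partial\Omega$ (equivalently $d_m\to 0$) does it rescale and obtain a half-space limit.

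A second, less fatal, point: the ``main obstacle'' you identify --- a uniform $C^\alpha$ Schauder estimate for $\nabla\psi_m$ up to the boundary --- is not needed and is not what the paper does. The paper gets compactness of $\nabla\Psi_m$ only in the \emph{interior} (via the mean-value formula, giving local $C^1$ bounds for harmonic functions), and passes to the limit in the weak form (2.7) using that $d_{\Omega_m}^{1-\alpha}|\nabla\Psi_m|\leq 1$ forces uniform boundedness of $\nabla\Psi_m$ in $L^p_{\mathrm{loc}}(\overline{\Omega_m})$ for $1\leq p<1/(1-\alpha)$, hence weak $L^p$ convergence. This is much lighter than boundary Schauder theory and is why the argument goes through cleanly.
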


\begin{proof}
We argue by contradiction. Suppose that (2.8) were false for any choice of constants $C$. Then, there would exist a sequence of solutions of (2.1), $\tilde{\Phi}_{m}$ for $\tilde{A}_{m}\in C^{\alpha}_{\textrm{sk}}(\overline{\Omega})$ such that 
\begin{equation*}
M_{m}=\sup_{x\in \Omega}d_{\Omega}^{1-\alpha}(x)\big|\nabla\tilde{\Phi}_{m}(x)\big|>m\Big[\tilde{A}_{m}\Big]^{(\alpha)}_{\Omega}.
\end{equation*}
Divide both sides by $M_m$ and observe that $\Phi_{m}=\tilde{\Phi}_{m}/M_m$ and $A_m=\tilde{A}_{m}/M_m$ satisfy
\begin{align*}
&\sup_{x\in \Omega}d_{\Omega}^{1-\alpha}(x)\big|\nabla\Phi_{m}(x)\big|=1,     \tag{2.18}\\
&\big[{A}_{m}\big]^{(\alpha)}_{\Omega}< \frac{1}{m}.   \tag{2.19}
\end{align*}
We take a point $x_{m}\in \Omega$ such that 
\begin{equation*}
d_{\Omega}^{1-\alpha}(x_m)\big|\nabla \Phi_{m}(x_m)\big|\geq \frac{1}{2}.   \tag{2.20}
\end{equation*}
Since $\Omega$ is bounded, there exists a subsequence of $\{x_m\}\subset \Omega$ (still denoted by $\{x_m\}$) such that $x_m\to x_{\infty}\in \overline{\Omega}$ as $m\to\infty$. Then, the proof is divided into two cases whether $x_{\infty}\in \Omega$ or $x_{\infty}\in \partial\Omega$.\\

\noindent 
\textit{Case 1}\ $x_{\infty}\in \Omega$. We take a point $x_0\in \Omega$ and observe from (2.19) that $\hat{A}_{m}(x)=A_{m}(x)-A_{m}(x_0)$ converges to zero uniformly in $\overline{\Omega}$ as $m\to\infty$. Since $A_{m}(x_0)$ is skew-symmetric, we replace $A_{m}$ to $\hat{A}_{m}$, i.e., 
\begin{equation*}
\int_{\Omega}\nabla \Phi_{m}\cdot \nabla\varphi\dd x
=-\int_{\partial\Omega}\hat{A}_{m}n\cdot \nabla\varphi \dd {\cal{H}}
\end{equation*}
for all $\varphi\in C^{1}(\overline{\Omega})$ by Remarks 2.4 (iv). Since $\Phi_{m}$ is harmonic in $\Omega$, $\nabla \Phi_{m}$ subsequently converges to $\nabla\Phi$ locally uniformly in $\Omega$. In fact, by the mean-value formula $\nabla \Phi_m(x)=\fint_{B_{x}(r)}\nabla \Phi_m(y)\dd y$ and integration by parts, we have 
\begin{align*}
\big|\nabla^{2} \Phi_{m}(x)\Big|\leq \frac{n}{r}\big\|\nabla \Phi_m\big\|_{L^{\infty}(\partial B_{x}(r))}
\end{align*}
for $x\in \Omega$ and $r>0$ such that $B_{x}(r)\subset \Omega$. Thus, $\nabla^{2}\Phi_m$ is uniformly bounded in $B_{x}(r)$ by (2.18). We consider an arbitrary compact set $K\subset \Omega$ and, by taking a finite number of open balls, obtain a uniform bound of $\nabla^{2}\Phi_m$ in $K$. Since $\nabla \Phi_m$ is equi-continuous in $K$, by choosing a subsequence, $\nabla \Phi_m$ converges to $\nabla \Phi$ uniformly in $K$ by Ascoli-Arzel\`a theorem. We observe a convergence of the integral. Since $\nabla \Phi_m$ is bounded in $L^{p}(\Omega)$ for $1\leq p<1/(1-\alpha)$ by (2.18), by choosing a subsequence, we have $\nabla \Phi_m\to \nabla \Phi$ weakly in $L^{p}(\Omega)$. Sending $m\to\infty$ implies
\begin{equation*}
\int_{\Omega}\nabla \Phi\cdot \nabla\varphi\dd x=0.
\end{equation*}
We apply Lemma 2.6 and conclude that $\nabla \Phi\equiv 0$. This contradicts $d_{\Omega}^{1-\alpha}(x_{\infty})|\nabla \Phi(x_{\infty})|\geq 1/2$ by (2.20). So Case 1 does not occur.\\

\noindent 
\textit{Case 2}\ $x_{\infty}\in \partial\Omega$. Since the points $\{x_m\}$ accumulate the boundary and $\partial\Omega$ is sufficiently regular, there exists a unique projection $\tilde{x}_{m}\in \partial\Omega$ such that $x_m=\tilde{x}_{m}-d_m n_{\Omega}(\tilde{x}_{m})$. By translation and rotation around $\tilde{x}_m\in \partial\Omega$, we may assume that $x_{m}=(0,\cdots,0,d_m)$ and $\tilde{x}_{m}=0$. We rescale $\Phi_{m}$ around the point $x_m\in \Omega$ to get a blow-up sequence,
\begin{align*}
\Psi_{m}(x)&=d_m^{-\alpha}\Phi_{m}(x_{m}+d_{m}x),   \\
B_{m}(x)&=d_m^{-\alpha}A_{m}(x_{m}+d_{m}x).
\end{align*} 
The blow-up sequence $\Psi_{m}$ satisfies the Neumann problem  (2.1) for $B_{m}\in C^{\alpha}_{\textrm{sk}}\big(\overline{\Omega_m}\big)$ in the rescaled domain 
\begin{equation*}
\Omega_{m}=\frac{\Omega-\{x_m\}}{d_m}.
\end{equation*}
Note that the distance from the origin to the boundary $\partial\Omega_{m}$ is normalized as one, i.e., $d_{\Omega_{m}}(0)=1$ since we rescale $\Phi_{m}$ by $d_m=d_{\Omega}(x_m)$. The rescaled domain $\Omega_m$ expands to the half space $\R_{+,-1}=\{(x',x_n)\in \R\ |\ x_n>-1\}$. In fact, we consider the neighborhood of $x_{\infty}=0$,
\begin{equation*}
\Omega_{\textrm{loc}}=\Big\{(x',x_n)\in \R\ \Big|\ h(x')<x_n<h(x')+\beta',\ |x'|<\alpha'\Big\},
\end{equation*}
with some constants $\alpha'$, $\beta'$, $K'$ and the $C^{2}$-function $h$ satisfying $h(0)=0$, $\nabla'h(0)=0$ and $||h||_{C^{2}(\{  |x'|<\alpha'\})}\leq K'$. Then, $\Omega_{\textrm{loc} }\subset \Omega$ is rescaled to 
\begin{equation*}
\Omega_{\textrm{loc},m}=\left\{(x',x_n)\in \R\ \Bigg|\ h_{m}(x')-1<x_n<h_{m}(x')-1+\frac{\beta'}{d_m},\ |x'|<\frac{\alpha'}{d_m}     \right\},
\end{equation*}
where $h_{m}(x')=d_{m}^{-1}h(d_mx')$. Since $\nabla' h(0)=0$, $\Omega_{\textrm{loc},m}$ expands to the half space $\R_{+,-1}$ as $m\to\infty$. 

We take an arbitrary $\varphi\in C^{1}_{c}(\overline{\R_{+,-1}})$ and extend it as a compactly supported $C^{1}$-function in $\R$ (see, e.g., \cite[5.4]{E}). Then, $\Psi_{m}$ and $B_m$ satisfy 
\begin{equation*}
\int_{\Omega_{m}}\nabla \Psi_{m}\cdot \nabla \varphi\dd x
=-\int_{\partial\Omega_{m}}{B}_{m}n_{\Omega_{m}}\cdot \nabla \varphi \dd {\cal{H}}.    \tag{2.21}
\end{equation*}
The estimates (2.18)--(2.20) are inherited to 
\begin{equation*}
\begin{aligned}
&\sup_{x\in \Omega_m}d_{\Omega_{m}}^{1-\alpha}(x)\big|\nabla \Psi_{m}(x)\big|\leq 1,  \\
&\big[B_{m} \big]^{(\alpha)}_{\Omega_{m}}< \frac{1}{m},  \\
&\big|\nabla\Psi_{m}(0)\big|\geq \frac{1}{2}.
\end{aligned}
\end{equation*}
We set $\hat{B}_{m}(x)=B_{m}(x)-B_{m}(x_0)$ by some $x_{0}\in \Omega_m$. Then, $\hat{B}_m$ satisfies
\begin{equation*}
\big|\hat{B}_{m}(x)\big|\leq \frac{1}{m}|x-x_0|^{\alpha}\quad \textrm{for}\ x\in \Omega_{m}.
\end{equation*}
Since $B_m(x_0)$ is skew-symmetric, we replace $B_{m}$ to $\hat{B}_{m}$ in (2.21). Since $\nabla \Psi_m$ is harmonic and $d_{\Omega_m}^{1-\alpha}\nabla \Psi_m$ is uniformly bounded in $\Omega_m$, by choosing a subsequence, $\nabla \Psi_m$ converges to $\nabla \Psi$ locally uniformly in $\mathbb{R}^{n}_{+,-1}$ as in Case 1. We observe a convergence of the integral (2.21). Since local $L^{p}$-norms of $\nabla \Psi_m$ in $\overline{\Omega_m}$ are uniformly bounded for $1\leq p<1/(1-\alpha)$, by choosing a subsequence, we have $\nabla \Psi_m\to \nabla \Psi$ weakly in $L^{p}_{\textrm{loc}}(\overline{\R_{+,-1}})$. Sending $m\to\infty$ implies
\begin{equation*}
\int_{\R_{+,-1}}\nabla \Psi\cdot \nabla\varphi\dd x=0.
\end{equation*}
We apply Lemma 2.5 and conclude that $\nabla \Psi\equiv 0$. This contradicts $|\nabla\Psi(0)|\geq 1/2$. Thus, Case 2 does not occur.

We reached a contradiction. The proof is now complete.
\end{proof}

\begin{rem}(Boundary regularity)
We are able to prove the a priori estimate (2.4) for $C^{3}$-bounded domains by a similar blow-up argument (see \cite{AG1}). Since $\nabla \Phi$ may not be integrable up to the boundary under the bound (2.2), we used the weak form (2.3). This is the reason why we need $C^{3}$ to prove (2.4) by a blow-up argument. However, $C^{3}$ is not optimal for (2.4). In fact, it is proved in \cite[Lemma 6.2]{KLS} by using the Green function that (2.4) holds for $C^{1,\gamma}$-bounded domains. Thus, bounded domains of class $C^{2}$ are also strongly admissible.  
\end{rem}

\vspace{5pt}

\begin{thm}
An exterior domain of class $C^{3}$ is strongly admissible.
\end{thm}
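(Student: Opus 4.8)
The plan is to mimic the proof of Theorem 2.9 for bounded domains, running a blow-up argument by contradiction, but with one extra case coming from the unboundedness of the exterior domain: the blow-up points $\{x_m\}$ may escape to infinity. So I would suppose that (2.8) fails for some fixed $\alpha \in (0,1)$ and produce solutions $\tilde\Phi_m$ of (2.1) for $\tilde A_m \in C^\alpha_{\textrm{sk}}(\overline\Omega)$ with $M_m = \sup_x d_\Omega^{1-\alpha}(x)|\nabla\tilde\Phi_m(x)| > m[\tilde A_m]^{(\alpha)}_\Omega$, normalize by $M_m$ to get $\Phi_m = \tilde\Phi_m/M_m$, $A_m = \tilde A_m/M_m$ satisfying (2.18)--(2.19), and pick $x_m$ with $d_\Omega^{1-\alpha}(x_m)|\nabla\Phi_m(x_m)| \ge 1/2$. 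Now there are three cases for the behavior of $\{x_m\}$: after passing to a subsequence, either (i) $x_m \to x_\infty \in \Omega$, (ii) $x_m \to x_\infty \in \partial\Omega$, or (iii) $|x_m| \to \infty$. Cases (i) and (ii) are handled exactly as Cases 1 and 2 in the proof of Theorem 2.9 --- subtracting a constant skew-symmetric matrix $A_m(x_0)$ (legitimate by Remarks 2.4 (iv)), using interior gradient estimates for harmonic functions plus Ascoli--Arzel\`a and weak $L^p$ convergence ($1 \le p < 1/(1-\alpha)$) to extract a limit $\nabla\Phi$ solving the homogeneous Neumann problem, and invoking Lemma 2.7 (bounded case applied to a large ball, or rather the exterior-domain Lemma 2.8) in Case (i) and Lemma 2.5 in Case (ii) to force $\nabla\Phi \equiv 0$, contradicting the lower bound. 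Strictly, in Case (i) I should use Lemma 2.8 since $\Omega$ is exterior, and take test functions $\varphi \in C^1_c(\overline\Omega)$.

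The genuinely new case is (iii), $|x_m| \to \infty$. Here I would rescale around $x_m$ by $d_m = d_\Omega(x_m)$, setting $\Psi_m(x) = d_m^{-\alpha}\Phi_m(x_m + d_m x)$ and $B_m(x) = d_m^{-\alpha}A_m(x_m + d_m x)$ on $\Omega_m = (\Omega - \{x_m\})/d_m$, which solve (2.1) in $\Omega_m$, inherit the scale-invariant bounds $\sup d_{\Omega_m}^{1-\alpha}|\nabla\Psi_m| \le 1$, $[B_m]^{(\alpha)}_{\Omega_m} < 1/m$, $|\nabla\Psi_m(0)| \ge 1/2$, with $d_{\Omega_m}(0) = 1$. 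The key geometric point is that since $|x_m| \to \infty$ and $\Omega$ is exterior, $d_\Omega(x_m)$ stays comparable to $1$ only if the points approach the boundary --- but if $\textrm{dist}(x_m, \partial\Omega)$ stays bounded, the rescaled domains $\Omega_m$ expand to a half space exactly as in Case 2 (the boundary is uniformly $C^3$ near $\infty$ in the sense that after the rigid motion the defining function has uniformly controlled $C^2$ norm... actually for a genuine exterior domain with compact boundary this does not happen since $d_\Omega(x_m) \to \infty$). If instead $d_m = d_\Omega(x_m) \to \infty$, then $\Omega_m$ expands to all of $\R$ minus a shrinking hole near $-x_m/d_m$; but $|x_m|/d_m \le 2$ eventually (since $|x_m| \le 2 d_\Omega(x_m)$ for $|x_m|$ large, by the argument in Lemma 2.8 using $R_0 > \textrm{diam}\,\Omega^c$), so the rescaled hole $\Omega_m^c/d_m$ lies in a fixed bounded region and shrinks to a single point as $m \to \infty$. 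Thus $\Psi_m$ converges locally uniformly (off that point) to a harmonic function $\Psi$ on $\R \setminus \{x_0\}$ for some $|x_0| \le 2$ with $\sup |x - x_0|^{1-\alpha}|\nabla\Psi(x)| < \infty$ near $x_0$ and $\nabla\Psi$ bounded globally, using the same interior estimates and weak-$L^p$-convergence of $\nabla\Psi_m$ on compact subsets of $\R \setminus \{x_0\}$; the boundary integral in (2.21) tends to zero because $[\hat B_m]$-type bounds make $\hat B_m \to 0$ locally and the rescaled surface measure concentrates. Then Proposition 2.8 (the extension theorem for harmonic functions with $\nabla\Phi = O(|x|^{-\alpha})$ at the singularity) removes the singularity, so $\Psi$ extends harmonically to all of $\R$ with bounded gradient, hence by Liouville $\nabla\Psi \equiv 0$, contradicting $|\nabla\Psi(0)| \ge 1/2$.

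The main obstacle I expect is case (iii): getting the convergence of the weak formulation (2.21) right near the rescaled hole and making precise the claim that $\Omega_m^c/d_m$ shrinks to a point inside a fixed bounded set, together with checking that the limit $\Psi$ really satisfies the hypotheses of Proposition 2.8 (in particular the local bound $\sup_{x \in B_0(\rho)} |x - x_0|^{1-\alpha}|\nabla\Psi(x)| < \infty$ around the singular point, after re-centering). One must also be slightly careful that in case (iii) with $d_m$ bounded away from both $0$ and $\infty$ along a further subsequence one is back in the half-space situation of Case 2 but "at infinity" --- however for a classical exterior domain with compact boundary $\Omega^c$, $|x_m| \to \infty$ forces $d_\Omega(x_m) \to \infty$, so only the whole-space blow-up occurs. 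Cases (i) and (ii) are routine once Theorem 2.9's proof is in hand. I would organize the write-up as: set up the contradiction and the sequence, dispose of (i) via Lemma 2.8 and (ii) via Lemma 2.5 with a remark that these are identical to Theorem 2.9, and spend the bulk of the argument on (iii), invoking Proposition 2.8 and the Liouville theorem at the end.
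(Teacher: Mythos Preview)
Your proposal is correct and follows essentially the same blow-up strategy as the paper; the paper organizes the case split by whether $d_m = d_\Omega(x_m)$ stays bounded (your cases (i)+(ii)) or $d_m \to \infty$ (your case (iii)), which for an exterior domain with compact complement is equivalent to your trichotomy on $\{x_m\}$. Two small clarifications: the exterior-domain uniqueness you need in case (i) is Lemma 2.7 (your first instinct), not ``Lemma 2.8''---Proposition 2.8 is the removable-singularity result you correctly invoke in case (iii); and in case (iii) the paper's argument is cleaner than you anticipate, since no weak form or boundary integral is needed at all---one simply passes to a locally uniform limit of the harmonic $\nabla\Psi_m$ on $\mathbb{R}^n\setminus\{a\}$ (where $a$ is a subsequential limit of points $a_m\in\partial\Omega_m$ with $|a_m|=1$), inherits the bound $|x-a|^{1-\alpha}|\nabla\Psi(x)|\le 1$, and then applies Proposition 2.8 followed by Liouville.
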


\begin{proof}
We argue by contradiction. Suppose that (2.8) were false. Then, there would exist a sequence of solutions for (2.1), $\Phi_{m}$ for $A_m\in C^{\alpha}_{\textrm{sk}}(\overline{\Omega})$ and a sequence of points $\{x_m\}\subset \Omega$ such that 
\begin{align*}
&\sup_{x\in \Omega}d_{\Omega}^{1-\alpha}(x)\big|\nabla \Phi_{m}(x)\big|\leq 1,  \tag{2.22}\\
&\big[A_{m} \big]^{(\alpha)}_{\Omega}< \frac{1}{m}, \tag{2.23} \\
&d_{\Omega}^{1-\alpha}(x_m)\big|\nabla\Phi_{m}(x_m)\big|\geq \frac{1}{2}.   \tag{2.24}
\end{align*}
The proof is divided into two cases depending on whether $d_m=d_{\Omega}(x_m)$ converges or not.\\

\noindent 
\textit{Case 1} $\overline{\lim}_{m\to\infty}d_{m}<\infty$. We may assume $x_m\to x_{\infty}\in \overline{\Omega}$ as $m\to\infty$ by choosing a subsequence. Then, Case 1 is divided into two cases whether $x_{\infty}\in \Omega$ or $x_{\infty}\in \partial\Omega$.\\

\noindent
(i) $x_{\infty}\in \Omega$. The proof reduces to the uniqueness in the exterior domain $\Omega$. As in Case 1 in the proof of Theorem 2.9, there exists a subsequence of $\{\Phi_{m}\}$ (still denoted by $\{\Phi_{m}\}$) such that $\nabla \Phi_m\to \nabla \Phi$ locally uniformly in $\Omega$. Moreover, by choosing a subsequence, $\nabla \Phi_m\to \nabla \Phi$ weakly in $L^{p}_{\textrm{loc}}(\overline{\Omega})$ for $1\leq p <1/(1-\alpha)$. By (2.23), sending $m\to\infty$ implies
\begin{equation*}
\int_{\Omega}\nabla\Phi\cdot \nabla\varphi\dd x=0\quad\textrm{for all}\ \varphi\in C^{1}_{c}(\overline{\Omega}).
\end{equation*}
We apply Lemma 2.7 and conclude that $\nabla\Phi\equiv 0$. This contradicts $d_{\Omega}^{1-\alpha}(x_{\infty})|\nabla \Phi(x_{\infty})|\geq 1/2$. So the case (i) does not occur.\\

\noindent 
(ii) $x_{\infty}\in \partial\Omega$. The proof reduces to the uniqueness in a half space. By the same rescaling argument as in Case 2 of the proof of Theorem 2.9, we are able to prove that the case (ii) does not occur.\\

\noindent 
\textit{Case 2} $\overline{\lim}_{m\to\infty}d_{m}=\infty$. We may assume ${\lim}_{m\to\infty}d_{m}=\infty$. The proof reduces to the whole space. We rescale $\Phi_{m}$ around the point $x_m\in \Omega$ to get a blow-up sequence,
\begin{equation*}
\Psi_{m}(x)=d_m^{-\alpha}\Phi_{m}(x_{m}+d_{m}x)\quad \textrm{for}\ x\in \Omega_{m}:=\frac{\Omega-\{x_m\}}{d_m}.   
\end{equation*} 
Since we rescale $\Phi_{m}$ by $d_m=d_{\Omega}(x_m)$, the distance from the origin to $\partial\Omega_m$ is normalized as one, i.e., $d_{\Omega_m}(0)=1$. We take a point $a_m\in \partial\Omega_{m}$ such that $|a_m|=d_{\Omega_m}(0)=1$. By choosing a subsequence of $\{a_m\}$, we may assume $a_m\to a\in \R$ as $m\to\infty$. Since $d_m\to\infty$, the rescaled domain $\Omega_m$ approaches $\R\backslash\{a\}$. It follows from (2.22) and (2.24) that  
\begin{align*}
\sup_{x\in \Omega_{m}}d_{\Omega_m}^{1-\alpha}(x)\big|\nabla \Psi_{m}(x)\big|\leq 1,\\
\big|\nabla \Psi_{m}(0)\big|\geq \frac{1}{2}.
\end{align*}
Since $\Psi_m$ is harmonic in $\Omega_m$, there exists a subsequence of $\{\Psi_m\}$ (still denoted by $\{\Psi_m\}$) such that $\nabla \Psi_{m}\to \nabla \Psi$ locally uniformly in $\R\backslash \{a\}$. Then, the limit $\Psi$ is harmonic in $\R\backslash \{a\}$ and satisfies 
\begin{equation*}
\sup\Big\{|x-a|^{1-\alpha}\big|\nabla\Psi(x)\big| \bigm| x\in \R,\ x\neq a    \Big\}\leq 1.
\end{equation*}
We apply Proposition 2.8 and observe that $\Psi$ is harmonic at $x=a$. By applying the Liouville theorem, we conclude that $\nabla \Psi\equiv 0$. This contradicts $|\nabla\Psi(0)|\geq 1/2$ so Case 2 does not occur.

We reached a contradiction. The proof is now complete. 
\end{proof}

\vspace{10pt}

\section{A scale invariant H\"older-type estimate for the Helmholtz projection}

\vspace{10pt}

The goal of this section is to prove the H\"older-type estimate (1.12) (Lemma 3.3). We divide $\nabla \Phi=\q_{\Omega}\partial f$ into two terms $\nabla \Phi_{1}=\q_{\R}\partial f$ and $\nabla\Phi_{2}$. We estimate $\nabla \Phi_1$ by the Newton potential and $\nabla \Phi_{2}$ by the a priori estimate (2.8). In what follows, we identify $g\in C^{\infty}_{c}(\Omega)$ and its zero extension to $\R\backslash \overline{\Omega}$. Let $E$ denote the fundamental solution of the Laplace equation, i.e., $E(x)=C_{n}/ |x|^{n-2}$ for $n\geq 3$ and $E(x)=-1/(2\pi)\log{|x|}$ for $n=2$, where $C_n=(an(n-2))^{-1}$ and the volume of $n$-dimensional unit ball $a$.

\vspace{5pt}
 
\begin{prop}
For $g\in C^{\infty}_{c}(\R)$, set $h=E*g$. Then, $h\in C^{\infty}(\R)$ satisfies $\nabla^{2}h\in L^{2}(\R)$ and $-\Delta\ h=g$ in $\R$. Moreover, $-\nabla \D\ h$ agrees with $\q_{\R} g$ and the estimate 
\begin{equation*}
\Big[\nabla^{2}h \Big]^{(\alpha)}_{\R}+\Big[\q_{\R} g \Big]^{(\alpha)}_{\R}
\leq C_{\alpha}\Big[g\Big]^{(\alpha)}_{\R}    \tag{3.1} 
\end{equation*}
holds for $\alpha\in (0,1)$ with the dilation invariant constant $C_{\alpha}$, independent of $g$.
\end{prop}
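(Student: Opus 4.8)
The plan is to dispose of the elementary claims quickly and concentrate on the H\"older bound (3.1), where the analytic content lies; as the assertions on $\nabla^{2}h$ and the H\"older seminorm are scalar and applied componentwise, I suppress the vector index of $g$ throughout. First I would record the standard facts on the Newtonian potential: since $g\in C^{\infty}_{c}(\R)$ and $E\in L^{1}_{\textrm{loc}}(\R)$, the convolution $h=E*g$ lies in $C^{\infty}(\R)$, and differentiating under the integral with $-\Delta E=\delta$ gives $-\Delta h=g$ in $\R$; for $\nabla^{2}h\in L^{2}(\R)$ one passes to Fourier variables, $\widehat{\partial_{i}\partial_{j}h}(\xi)=-(\xi_{i}\xi_{j}/|\xi|^{2})\widehat{g}(\xi)$, and applies Plancherel, the multiplier being bounded by $1$, to get $\|\nabla^{2}h\|_{L^{2}(\R)}\leq C\|g\|_{L^{2}(\R)}<\infty$ (for $n=2$ the two derivatives annihilate the $\delta$-ambiguity of $\widehat{E}$, so the identity and the bound persist). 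To identify $\q_{\R}g$, I note that $\D h=E*\D g$ solves $-\Delta(\D h)=\D g$, hence $\Delta\D h=-\D g$ and $\D(g+\nabla\D h)=\D g+\Delta\D h=0$; since moreover $-\nabla\D h\in L^{2}(\R)$ by the previous step, the splitting $g=(g+\nabla\D h)+(-\nabla\D h)$ is the Helmholtz decomposition of $g\in L^{2}(\R)$, so $\q_{\R}g=-\nabla\D h$.

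For the H\"older estimate, since the components of $\q_{\R}g=-\nabla\D h$ are linear combinations of the $\partial_{i}\partial_{k}h$, it suffices to prove $[\partial_{i}\partial_{j}h]^{(\alpha)}_{\R}\leq C_{\alpha}[g]^{(\alpha)}_{\R}$ for each $i,j$ with a dimensional constant $C_{\alpha}$; this is precisely the interior Schauder estimate for the Newtonian potential in seminorm form, which I would prove by the classical singular-integral argument. One starts from the representation
\begin{equation*}
\partial_{i}\partial_{j}h(x)=\textrm{p.v.}\int_{\R}\partial_{i}\partial_{j}E(x-y)\big(g(y)-g(x)\big)\dd y+c_{ij}\,g(x),
\end{equation*}
valid because $\partial_{i}\partial_{j}E$ is smooth off the origin, homogeneous of degree $-n$, and has vanishing spherical mean, then forms the difference at two points $x,\bar x$ with $\delta=|x-\bar x|$ and splits the integral over the ball $B$ of radius $\sim\delta$ about the midpoint of $[x,\bar x]$ and over $\R\setminus B$. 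On $B$ one uses $|g(y)-g(z)|\leq[g]^{(\alpha)}_{\R}|y-z|^{\alpha}$, the size bound $|\partial_{i}\partial_{j}E|\leq C|\cdot|^{-n}$, and the integrability of $|\cdot|^{\alpha-n}$ near the singularity; on $\R\setminus B$ one uses the mean-value bound $|\partial_{i}\partial_{j}E(x-y)-\partial_{i}\partial_{j}E(\bar x-y)|\leq C\delta|x-y|^{-n-1}$ (valid since $|x-y|\gtrsim\delta$ there) against $|g(y)-g(x)|\leq[g]^{(\alpha)}_{\R}|x-y|^{\alpha}$, integrating $\delta|x-y|^{\alpha-n-1}$ over $|x-y|\gtrsim\delta$, where the convergence uses $\alpha<1$. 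Each piece is $\leq C_{\alpha}[g]^{(\alpha)}_{\R}\delta^{\alpha}$, and the difference $c_{ij}(g(x)-g(\bar x))$ of the lower-order terms is of the same order, giving (3.1). Dilation invariance of $C_{\alpha}$ is transparent since $\partial_{i}\partial_{j}E$ is scale invariant; equivalently, under $g\mapsto g(\lambda\,\cdot\,)$ one has $E*(g(\lambda\,\cdot\,))=\lambda^{-2}(E*g)(\lambda\,\cdot\,)$ for $n\geq 3$ (and the same up to an additive constant for $n=2$), so $\nabla^{2}h$ rescales as $(\nabla^{2}h)(\lambda\,\cdot\,)$ and both sides of (3.1) pick up a factor $\lambda^{\alpha}$.

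The delicate point is exactly this last one: keeping the constant in (3.1) independent of the support of $g$. The lower-order and boundary terms in the singular-integral representation have to be organized so that the $\|g\|_{\infty}$-type contributions cancel in the H\"older difference (one must also treat separately the regime $|x-\bar x|$ large, where instead one bounds $\nabla^{2}h$ at $x$ and $\bar x$ individually and exploits $\|g\|_{\infty}\leq[g]^{(\alpha)}_{\R}(\operatorname{diam}\operatorname{supp}g)^{\alpha}$) --- this is nothing but the standard fact that a Calder\'on--Zygmund operator such as $g\mapsto\partial_{i}\partial_{j}E*g$, equivalently the zero-order multiplier $-\xi_{i}\xi_{j}/|\xi|^{2}$, is bounded on the homogeneous H\"older space $\dot C^{\alpha}(\R)$ with a constant depending only on $n$ and $\alpha$, and it may simply be quoted. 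A minor caveat for $n=2$: the potential $h=E*g$ need not decay at infinity, but this is immaterial here since (3.1) and the identity $\q_{\R}g=-\nabla\D h$ involve only $\nabla^{2}h$ and $\nabla\D h$.
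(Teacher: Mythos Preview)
Your proof is correct and follows essentially the same line as the paper's: the paper quotes \cite[Lemma 4.4]{GT} for the H\"older bound you spell out in detail, and obtains $\nabla^{2}h\in L^{2}$ via $\|\nabla^{2}h\|_{L^{2}}=\|\Delta h\|_{L^{2}}$, which is your Plancherel argument. The one minor variation is in the identification $\q_{\R}g=-\nabla\D h$: you verify directly that $g=(g+\nabla\D h)+(-\nabla\D h)$ is the $L^{2}$-Helmholtz decomposition, whereas the paper shows the difference $\nabla\tilde{\Phi}=\q_{\R}g+\nabla\D h$ is harmonic with $\nabla\tilde{\Phi}\in L^{2}$ and concludes $\nabla\tilde{\Phi}\equiv 0$ by the mean-value formula and H\"older's inequality --- effectively the Liouville step underlying the uniqueness of the Helmholtz decomposition you invoke.
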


\begin{proof}
We observe that $h\in C^{\infty}(\mathbb{R}^{n})$ satisfies $-\Delta h=g$ on $\R$. The second derivatives of $h$ are in $L^{2}$ since $||\nabla^{2}h||_{L^{2}}=||\Delta h||_{L^{2}}$. By pointwise kernel estimates of the fundamental solution, we have
\begin{align*}
\Big[\nabla^{2}h\Big]^{(\alpha)}_{\mathbb{R}^{n}}\leq C_{\alpha}\Big[g\Big]^{(\alpha)}_{\mathbb{R}^{n}}
\end{align*}
for $\alpha\in (0,1)$ with some constant $C_{\alpha}$ \cite[Lemma 4.4]{GT}. It remains to show that $\nabla \Psi=-\nabla \D h$ agrees with $\nabla \Phi=\q_{\R} g$. Since $\Phi$ and $\Psi$ satisfy the Poisson equation $\Delta \Phi=\D\ g$ in $\R$, $\tilde{\Phi}=\Phi-\Psi$ is harmonic and smooth in $\R$. By the mean-value formula, it follows that 
\begin{equation*}
\nabla \tilde{\Phi}(x)=\fint_{B_{x}(r)}\nabla \tilde{\Phi}(y)\dd y\quad \textrm{for}\ x\in \R,\ r>0.
\end{equation*}
Since $\nabla \tilde{\Phi}\in L^{2}(\R)$, applying the H\"older inequality yields 
\begin{equation*}
\big|\nabla \tilde{\Phi}(x)\big|\leq \frac{1}{a^{\frac{1}{2}}r^{\frac{n}{2}}}\big\|\nabla \tilde{\Phi}\big\|_{L^{2}(\R)}.
\end{equation*}
By sending $r\to\infty$, $\nabla \tilde{\Phi}\equiv 0$ follows. The proof is complete. 
\end{proof}

\vspace{5pt}

We next show that $\nabla \Phi=\q_{\Omega}g-\q_{\R}g$ solves the Neumann problem (2.1).

\vspace{5pt}

\begin{prop}
Let $\Omega$ be a uniformly $C^{1}$-domain in $\R$. Let $\alpha \in (0,1)$. Set $\nabla \Phi=\q_{\Omega}g-\q_{\R}g$ and $h=E*g$ for $g\in C^{\infty}_{c}(\Omega)$. Then, $\Phi$ is a solution of the Neumann problem (2.1) for 
\begin{equation*}
A=\nabla h-\nabla^{T}h.   
\end{equation*}
Moreover, the estimate 
\begin{equation*}
\sup_{x\in \Omega}d_{\Omega}^{1-\alpha}(x)\big|\nabla\Phi(x)\big|\leq C\Big[\nabla h-\nabla^{T}h\Big]^{(\alpha)}_{\Omega}    \tag{3.2}
\end{equation*}
holds provided that $\Omega$ is admissible for $\alpha$. The constant $C=C_{\alpha,\Omega}$ is invariant of dilation, translation, and rotation of $\Omega$.
\end{prop}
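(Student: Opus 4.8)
The plan is to realize $\nabla\Phi=\q_\Omega g-\q_{\R}g$ as the difference of two gradient fields that solve the same Poisson equation, in $\Omega$ and in $\R$, so that $\Phi$ is harmonic in $\Omega$, and then to read off the boundary data it carries. Writing $\q_\Omega g=\nabla q_{1}$ for the gradient part of the $\tilde{L}^{p}$-Helmholtz decomposition $g=\p_\Omega g+\nabla q_{1}$ ($p\geq 2$) and taking the divergence gives $\Delta q_{1}=\D g$ in $\Omega$; by Proposition 3.1, $\q_{\R}g=-\nabla\D h$ with $-\Delta h=g$, hence $\D(\q_{\R}g)=-\Delta\D h=\D g$ as well, so $\Delta\Phi=0$ in $\Omega$ and, by Weyl's lemma, $\nabla\Phi$ is harmonic in $\Omega$. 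This already yields the qualitative bound (2.6): since $\q_\Omega g\in\tilde{L}^{p}$ and $\q_{\R}g=\nabla^{2}(E*g)\in L^{p}(\R)$ for every $p\in[2,\infty)$, we have $\nabla\Phi\in L^{p}(\Omega)$, and the mean value property of the harmonic field $\nabla\Phi$ on $\overline{B_{x}(d_\Omega(x)/2)}\subset\Omega$ together with H\"older's inequality gives $|\nabla\Phi(x)|\leq C_{p}d_\Omega(x)^{-n/p}\|\nabla\Phi\|_{L^{p}(\Omega)}$; choosing $p>n/(1-\alpha)$ gives $\sup_\Omega d_\Omega^{1-\alpha}|\nabla\Phi|<\infty$ (for an exterior domain one also uses that $\nabla\Phi$ is harmonic off a compact set and lies in $L^{2}\cap L^{p}$, so it decays at infinity and the weighted quantity stays bounded there too).

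Next I would verify the weak form (2.7). Fix $\varphi\in C^{1}_{c}(\overline\Omega)$. By the $L^{p}$--$L^{p'}$ orthogonality of the Helmholtz decomposition, $\int_\Omega\p_\Omega g\cdot\nabla\varphi\,\dd x=0$, so, using that $g$ is supported in $\Omega$,
\begin{equation*}
\int_\Omega\q_\Omega g\cdot\nabla\varphi\,\dd x=\int_\Omega g\cdot\nabla\varphi\,\dd x=-\int_\Omega(\D g)\varphi\,\dd x,
\end{equation*}
while integrating by parts (with $\Delta\D h=-\D g$, from $-\Delta h=g$) gives $\int_\Omega\q_{\R}g\cdot\nabla\varphi\,\dd x=-\int_\Omega(\D g)\varphi\,\dd x-\int_{\partial\Omega}(\partial\D h/\partial n)\varphi\,\dd{\cal{H}}$. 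Subtracting, $\int_\Omega\nabla\Phi\cdot\nabla\varphi\,\dd x=\int_{\partial\Omega}(\partial\D h/\partial n)\varphi\,\dd{\cal{H}}$, so it remains to identify this boundary integral with $-\int_{\partial\Omega}An\cdot\nabla\varphi\,\dd{\cal{H}}$ for $A=\nabla h-\nabla^{T}h$, that is, to check $\D_{\partial\Omega}(An)=\partial\D h/\partial n$ on $\partial\Omega$. For this I would use: (a) for a skew-symmetric $C^{1}$ matrix field $A$ one has $\D_{\partial\Omega}(An)=-(\nabla\cdot A)\cdot n$ on $\partial\Omega$ (the identity behind Remark 2.2: two integrations by parts, in which the bulk term $\sum_{i,j}A_{ij}\partial_{i}\partial_{j}\varphi$ and $\D(\nabla\cdot A)=\sum_{i,j}\partial_{i}\partial_{j}A_{ij}$ both vanish by skew-symmetry); and (b) $g\equiv0$ near $\partial\Omega$, so $h$ is harmonic there and $\nabla\cdot A=\Delta h-\nabla\D h=-\nabla\D h$. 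Combining (a) and (b) gives $\D_{\partial\Omega}(An)=\partial\D h/\partial n$, hence (2.7). (The sign comes out right with the convention $(\nabla h)_{ij}=\partial_{j}h_{i}$.)

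Since $A=\nabla h-\nabla^{T}h$ is skew-symmetric and, for $g\in C^{\infty}_{c}(\Omega)$, lies in $C^{\alpha}(\overline\Omega)$ — clear for bounded $\Omega$, and for an exterior domain because $\nabla h=O(|x|^{1-n})$ and $\nabla^{2}h=O(|x|^{-n})$ — the first two paragraphs show that $\Phi$ is a solution of the Neumann problem (2.1) of type $\alpha$ for this $A$. Then (3.2) is precisely the a priori estimate (2.8) applied to this $\Phi$ and $A$, and the stated invariance of $C$ under dilation, translation and rotation is Remarks 2.4 (i). The main obstacle is the boundary bookkeeping in the second paragraph, namely matching the surface-divergence term with $-\int_{\partial\Omega}An\cdot\nabla\varphi$ with the correct sign; this rests entirely on the two skew-symmetry cancellations in (a). A minor secondary technical point is the qualitative bound (2.6) for exterior domains, where the decay of $\nabla\Phi$ at infinity must be invoked in addition to its $\tilde{L}^{p}$-regularity.
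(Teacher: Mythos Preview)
Your argument is correct and rests on the same ingredients as the paper's proof: the divergence theorem plus skew--symmetry to verify the weak form (2.7), the mean--value formula with H\"older's inequality for the qualitative bound (2.6), and then the a priori estimate (2.8). The organization differs in two small but instructive ways.

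First, for (2.7) the paper proceeds in a single pass: it computes $\int_{\partial\Omega}An\cdot\nabla\varphi\,\dd\mathcal{H}=(\Delta h-\nabla\D h,\nabla\varphi)$ directly by one application of the divergence theorem, and separately computes $(\nabla\Phi,\nabla\varphi)=(\q_\Omega g-\q_{\R}g,\nabla\varphi)=(g+\nabla\D h,\nabla\varphi)=(-\Delta h+\nabla\D h,\nabla\varphi)$, matching the two. You instead first reduce $(\nabla\Phi,\nabla\varphi)$ to the boundary term $\int_{\partial\Omega}(\partial\D h/\partial n)\varphi$, and then identify this with $-\int_{\partial\Omega}An\cdot\nabla\varphi$ via a \emph{second} integration by parts (your step (a)); the paper's route avoids this extra step. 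Either way one must mollify so that $\varphi\in C^{2}$, which the paper states and you should too.

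Second, for the bound (2.6) the paper chooses the exponent \emph{exactly} as $p=n/(1-\alpha)$, so the mean--value/H\"older estimate reads $d_\Omega(x)^{1-\alpha}|\nabla\Phi(x)|\le C\|\nabla\Phi\|_{L^{p}(\Omega)}$ with no residual power of $d_\Omega$; this is uniform in $x\in\Omega$ for any uniformly $C^{1}$-domain, bounded or not. Your choice $p>n/(1-\alpha)$ leaves a factor $d_\Omega^{\,1-\alpha-n/p}$ that grows at infinity, which is what forces your ad hoc treatment of exterior domains; with the sharp exponent that detour disappears and the argument covers general uniformly $C^{1}$-domains at once.
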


\begin{proof}
We first show that $\nabla\Phi=\q_{\Omega} g-\q_{\R}g$ satisfies the weak form (2.7) for $\varphi\in C^{1}_{c}(\overline{\Omega})$. By a mollification, we may assume $\varphi\in C^{\infty}_{c}(\overline{\Omega})$. We observe that   
\begin{align*}
\int_{\partial\Omega}An\cdot \nabla \varphi \dd{\cal{H}}
&=\int_{\partial\Omega}(\nabla h-\nabla^{T}h)n\cdot \nabla \varphi \dd{\cal{H}}\\
&=\sum_{i,j=1}^{n}\int_{\partial\Omega}(\partial_{j}h^{i}-\partial_{i}h^{j})n^{j}\partial_{i}\varphi \dd{\cal{H}}\\
&=\sum_{i,j=1}^{n}\int_{\Omega}\left((\partial_{j}^{2}h^{i}-\partial_{i}\partial_{j}h^{j})\partial_{i}\varphi+(\partial_{j}h^{i}-\partial_{i}h^{j})\partial_{i}\partial_{j}\varphi \right) \dd x  \\
&=(\Delta h-\nabla\D h,\nabla \varphi).
\end{align*}
Here, $(\cdot,\cdot)$ denotes the inner product on $L^{2}(\Omega)$. Since $-\Delta\ h=g$ in $\R$ and $\q_{\R}g=-\nabla\D\ h$ by Proposition 3.1, it follows that
\begin{align*}
\big(\nabla \Phi, \nabla\varphi \big)
&=\big(\q_{\Omega}g,\nabla\varphi\big)-\big(\q_{\R}g,\nabla\varphi\big)   \\
&=\big(g,\nabla\varphi\big)+\big(\nabla \D\ h, \nabla\varphi\big)   \\
&=\big(-\Delta h+\nabla \D\ h,\nabla \varphi\big).
\end{align*} 
Thus, $\nabla \Phi$ satisfies (2.7) for $A=\nabla h-\nabla^{T} h$ and $h=E*g$.

We next show 
\begin{equation*}
\sup_{x\in\Omega}d^{1-\alpha}_{\Omega}(x)\big|\nabla \Phi(x)\big|<\infty.   \tag{3.3}
\end{equation*}
Since $\q_{\Omega}$ acts as a bounded operator on $L^{p}\cap L^{2}(\Omega)$ for $2\leq p <\infty$ in a uniformly $C^{1}$-domain  \cite{FKS1}, \cite{FKS2}, $\nabla \Phi=\q_{\Omega}g-\q_{\R}g\in L^{p}\cap L^{2}(\Omega)$. Since $\Phi$ is harmonic in $\Omega$, it follows from the mean-value formula that
\begin{equation*}
\nabla \Phi(x)=\fint_{B_{x}(r)}\nabla \Phi(y)\dd y\quad  \textrm{for}\ x\in \Omega,\ r=\frac{d_{\Omega}(x)}{2}.
\end{equation*} 
Applying the H\"older inequality for $p\in [1,\infty)$ implies
\begin{equation*}
\big|\nabla \Phi(x)\big|\leq |B_{x}(r)|^{-\frac{1}{p}} \big\|\nabla \Phi\big\|_{L^{p}(\Omega)}.
\end{equation*}
Since $r=d_{\Omega}(x)/2$, it follows that
\begin{equation*}
\sup_{x\in\Omega}d^{\frac{n}{p}}_{\Omega}(x)\big|\nabla \Phi(x)\big|
\leq C_{p} \big\|\nabla\Phi\big\|_{L^{p}(\Omega)},
\end{equation*}
with the constant $C_{p}=2^{n/p}a^{-1/p}$. We take $p=n/(1-\alpha)> n$ so that $n/p=1-\alpha$. Then, (3.3) follows. Thus, $\Phi$ is a solution of (2.1) for $A=\nabla h-\nabla^{T}h$. The estimate (3.2) follows from the a priori estimate (2.8) with the dilation invariant constant $C=C_{\alpha,\Omega}$ by Remarks 2.4 (i). The proof is complete. 
\end{proof}

\vspace{5pt}

Propositions 3.1 and 3.2 now imply:

\vspace{5pt}

\begin{lem}
Let $\Omega$ be a strongly admissible, uniformly $C^{1}$-domain. Set $\nabla\Phi_{1}=\q_{\R}\partial f$ and   
$\nabla \Phi_{2}=\q_{\Omega}\partial f-\q_{\R}\partial f$ for $f\in C^{\infty}_{c}(\Omega)$. Then, the estimate
\begin{equation*}
\Big[\Phi_{1}\Big]^{(\alpha)}_{\R}
+\sup_{x\in \Omega}d^{1-\alpha}_{\Omega}(x)\big|\nabla \Phi_{2}(x)\big|
\leq C\Big[f\Big]^{(\alpha)}_{\Omega}    \tag{3.4}
\end{equation*}
holds for $\alpha\in (0,1)$. The constant $C=C_{\alpha,\Omega}$ is invariant of dilation, translation and rotation of $\Omega$.
\end{lem}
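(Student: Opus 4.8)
The plan is to combine Propositions 3.1 and 3.2; the only genuine step is to rearrange the kernel $h=E*\partial f$ so that Proposition 3.1 is applied to $f$ itself rather than to $\partial f$.

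First I would pass to the whole space. Identifying $f\in C^{\infty}_{c}(\Omega)$ with its zero extension to $\R$, one has $[f]^{(\alpha)}_{\R}\le[f]^{(\alpha)}_{\Omega}$: for $x\in\Omega$ and $y\in\R\setminus\Omega$, choosing $z\in\partial\Omega$ with $|x-z|=d_{\Omega}(x)\le|x-y|$ and using $f(z)=0$ gives $|f(x)|\le[f]^{(\alpha)}_{\Omega}\,d_{\Omega}(x)^{\alpha}\le[f]^{(\alpha)}_{\Omega}\,|x-y|^{\alpha}$, while pairs of points in $\R\setminus\overline{\Omega}$ contribute $0$. Hence it suffices to bound the left-hand side of (3.4) by $C_{\alpha}[f]^{(\alpha)}_{\R}$.

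Next set $h=E*\partial f$ and observe $h=\partial(E*f)$, so that every entry of $\nabla h$, and in particular $\D h$, is a finite linear combination of second derivatives of the Newtonian potentials $E*f_{kl}$ of the components of $f$. Applying Proposition 3.1 with $g$ ranging over these components yields
\[
[\nabla h]^{(\alpha)}_{\R}+[\D h]^{(\alpha)}_{\R}\le C_{\alpha}[f]^{(\alpha)}_{\R}
\]
with $C_{\alpha}$ dilation invariant. For the first term of (3.4), Proposition 3.1 applied with $g=\partial f$ gives $\q_{\R}\partial f=-\nabla\D h$, so $\Phi_{1}=-\D h$ up to an additive constant and $[\Phi_{1}]^{(\alpha)}_{\R}=[\D h]^{(\alpha)}_{\R}\le C_{\alpha}[f]^{(\alpha)}_{\R}$. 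For the second term, since $\Omega$ is strongly admissible it is admissible for $\alpha$, so Proposition 3.2 applies with $g=\partial f$: $\Phi_{2}$ solves (2.1) for $A=\nabla h-\nabla^{T}h$ and
\[
\sup_{x\in\Omega}d_{\Omega}^{1-\alpha}(x)\big|\nabla\Phi_{2}(x)\big|\le C\big[\nabla h-\nabla^{T}h\big]^{(\alpha)}_{\Omega}\le 2C\,[\nabla h]^{(\alpha)}_{\R}\le C_{\alpha}[f]^{(\alpha)}_{\R},
\]
the constant being invariant of dilation, translation and rotation of $\Omega$. Adding the two estimates and using $[f]^{(\alpha)}_{\R}\le[f]^{(\alpha)}_{\Omega}$ gives (3.4), and the invariance of the final constant follows from that of the constants in Propositions 3.1 and 3.2.

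The one point requiring care is the kernel rearrangement $h=\partial(E*f)$: the derivative must be moved off $f$ onto the kernel before Proposition 3.1 is used, since applying that proposition directly to $g=\partial f$ would control only $[\partial f]^{(\alpha)}_{\R}$, a far stronger quantity than $[f]^{(\alpha)}_{\Omega}$. The remaining ingredients — the zero-extension comparison of H\"older seminorms, the monotonicity of $[\,\cdot\,]^{(\alpha)}$ under restriction of the domain, and the bookkeeping of the dilation/translation/rotation invariance — are routine.
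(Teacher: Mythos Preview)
Your proof is correct and follows essentially the same route as the paper: rewrite $h=E*\partial f=\partial(E*f)$, apply Proposition~3.1 to the components of $f$ to bound $[\nabla h]^{(\alpha)}_{\R}$ and hence $[\Phi_1]^{(\alpha)}_{\R}=[\D h]^{(\alpha)}_{\R}$, and then invoke Proposition~3.2 with $A=\nabla h-\nabla^{T}h$ for $\nabla\Phi_2$. Your explicit verification of $[f]^{(\alpha)}_{\R}\le[f]^{(\alpha)}_{\Omega}$ for the zero extension is a helpful detail the paper states without proof; the only slip is notational ($f$ is a vector field, so its components are $f_k$ rather than $f_{kl}$), which does not affect the argument.
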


\begin{proof}
By Proposition 3.1, $\nabla\Phi_{1}=\q_{\R}\partial f$ agrees with $-\nabla \D h$ for $h=E*\partial f$ and $f\in C^{\infty}_{c}(\Omega)$. Since $E*g$ satisfies $[\nabla^{2}E*g]^{(\alpha)}_{\mathbb{R}^{n}}\leq C_{\alpha}[g]^{(\alpha)}_{\R}$ for $g\in C_{c}^{\infty}$ by (3.1), $h=\partial E*f$ satisfies 
\begin{equation*}
\Big[\nabla h\Big]^{(\alpha)}_{\R}
\leq C_{\alpha}'\Big[f\Big]^{(\alpha)}_{\R}   
=C_{\alpha}'\Big[f\Big]^{(\alpha)}_{\Omega}.    \tag{3.5}
\end{equation*}
Since $\Phi_{1}$ agrees with $-\D h$ up to an additive constant, by (3.5) we have 
\begin{equation*}
\Big[\Phi_{1}\Big]^{(\alpha)}_{\R}
=\Big[\D h\Big]^{(\alpha)}_{\R}
\leq C_{\alpha}'\Big[f\Big]^{(\alpha)}_{\Omega}.
\end{equation*}
We estimate $\nabla \Phi_2$ by using (3.2). Since $\Phi_2$ solves (2.1) and 
\begin{align*}
\Big[\nabla h-\nabla^{T}h  \Big]^{(\alpha)}_{\Omega}
&\leq \Big[\nabla h-\nabla^{T}h  \Big]^{(\alpha)}_{\R}   \\
&\leq 2\Big[\nabla h\Big]^{(\alpha)}_{\R}  
\leq 2C_{\alpha}'\Big[f\Big]^{(\alpha)}_{\Omega}
\end{align*}
by (3.5), we apply (3.2) and obtain the estimate for $\nabla \Phi_{2}$ in (3.4). Since the  constants $C_{\alpha}'$ and the constant in (3.2) are invariant of dilation of $\Omega$, so is $C=C_{\alpha,\Omega}$. The proof is complete.
\end{proof}

\vspace{5pt}

\section{Local H\"older estimates for the Stokes equations}

\vspace{5pt}

In this section, we review the local H\"older estimates for the Stokes equations (Lemma 4.3). We recall the $\tilde{L}^{p}$-theory for the Stokes equations in a general uniformly $C^{2}$-domain.\\

Let $\Omega$ be a domain in $\R$, $n\geq 2$. We define the space $\tilde{L}^{p}(\Omega)$ by 
\begin{equation*}
\tilde{L}^{p}(\Omega)=L^{p}(\Omega)\cap L^{2}(\Omega)\quad \textrm{ for}\ 2\leq p<\infty.
\end{equation*}
The space $\tilde{L}^{p}(\Omega)$ is a Banach space equipped with the norm  
\begin{equation*}
\big\|f\big\|_{\tilde{L}^{p}(\Omega)}=\max\Big\{\big\|f\big\|_{L^{p}(\Omega)}, \big\|f\big\|_{L^{2}(\Omega)}  \Big\}.
\end{equation*}
Let $L^{p}_{\sigma}(\Omega)$ denote the $L^{p}$-closure of $C_{c,\sigma}^{\infty}(\Omega)$. We define $G^{p}(\Omega)=\big\{\nabla \varphi\in L^{p}(\Omega)\ |\ \varphi\in L^{p}_{\textrm{loc}}(\Omega)  \big\}$. We define $\tilde{L}^{p}_{\sigma}(\Omega)$ and $\tilde{G}^{p}(\Omega)$ by a similar way. It is proved in \cite{FKS1} that for each $f\in \tilde{L}^{p}$, there exists a unique decomposition $f=f_0+\nabla \varphi$ by $f_0\in \tilde{L}^{p}_{\sigma}$ and $\nabla\varphi\in \tilde{G}^{p}$ satisfying 
\begin{equation*}
\big\|f_0\big\|_{\tilde{L}^{p}(\Omega)}+\big\|\nabla \varphi\big\|_{\tilde{L}^{p}(\Omega)}\leq C\big\|f\big\|_{\tilde{L}^{p}(\Omega)}
\end{equation*}
for uniformly $C^{2}$-domains $\Omega$ in $\mathbb{R}^{3}$. Thus, the Helmholtz projection $\p_{\Omega}: f\longmapsto f_0$ and $\q_{\Omega}=I-\p_{\Omega}$ exist and are bounded on $\tilde{L}^{p}$. Moreover, it is proved that the Stokes semigroup $S(t)$ is an analytic semigroup on $\tilde{L}^{p}_{\sigma}$ for uniformly $C^{2}$-domains. The result is extended to the $n$-dimensional case for $n\geq 2$ in \cite{FKS2}, \cite{FKS3}. Thus, solutions of the Stokes equations for $v_0\in \tilde{L}^{p}_{\sigma}$ are given by the Stokes semigroup and the Helmholtz projection defined on $\tilde{L}^{p}$, i.e., $v=S(t)v_0$ and $\nabla q=\mathbb{Q}_{\Omega}\Delta v$. We call $(v,q)$ $\tilde{L}^{p}$-solutions.\\

We estimate H\"older norms of $\tilde{L}^{p}$-solutions $(v,q)$ by applying the a priori estimate (2.4) for $\nabla q$.

\vspace{5pt}

\begin{prop}
Let $\Omega$ be a uniformly $C^{2}$-domain in $\R$, $n\geq 2$. Let $(v, q)$ be an $\tilde{L}^{p}$-solution of (1.1)--(1.4) for $p\geq n$. Then, the pressure $q$ is a solution of the Neumann problem (2.1) for  
\begin{equation*}
A=-\nabla v+\nabla^{T} v.
\end{equation*}
Moreover, the estimate 
\begin{equation}
\sup_{x\in\Omega}d_{\Omega}(x)\big|\nabla q(x,t)\big|\leq C\big\|\nabla v-\nabla^{T} v\big\|_{L^{\infty}(\partial\Omega)}(t)   \tag{4.1}
\end{equation} 
holds for $t\in (0,T)$ provided that $\Omega$ is admissible for $\alpha=0$. The constant $C=C_{\Omega}$ is invariant of dilation, translation and rotation of $\Omega$. 
\end{prop}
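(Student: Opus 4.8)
The plan is to identify, for each fixed $t\in(0,T)$, the pressure $q(\cdot,t)$ as a solution of the Neumann problem (2.1) for $A=-\nabla v+\nabla^{T}v$ in the sense of Section~2 (that is, satisfying both the bound (2.2) and the weak form (2.3)), and then to read off (4.1) directly from the a priori estimate (2.4). To begin, taking the divergence of (1.1) and using (1.2) gives $\Delta q=\D(\Delta v-\partial_{t}v)=0$ in $\Omega$, so $q(\cdot,t)$ is harmonic; by interior regularity of the Stokes flow it is smooth in $\Omega$ and the relation $\nabla q=\Delta v-\partial_{t}v$ holds classically there.

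Next I would establish the weak boundary condition (2.3). Take $\varphi\in C^{2}_{c}(\overline{\Omega})$ with $\partial\varphi/\partial n=0$ on $\partial\Omega$; integrating by parts and discarding the boundary term (which vanishes by $\partial\varphi/\partial n=0$) gives $\int_{\Omega}q\,\Delta\varphi\,\dd x=-\int_{\Omega}\nabla q\cdot\nabla\varphi\,\dd x$, into which I substitute $\nabla q=\Delta v-\partial_{t}v$. The $\partial_{t}v$ contribution vanishes because $\int_{\Omega}v(\cdot,t)\cdot\nabla\varphi\,\dd x\equiv 0$ (by $\D v=0$ and $v=0$ on $\partial\Omega$), so its $t$-derivative is zero. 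For the $\Delta v$ term I use the divergence-free identity $\Delta v_{i}=\sum_{j}\partial_{j}(\partial_{j}v_{i}-\partial_{i}v_{j})$ and integrate by parts once more: the resulting volume term $\sum_{i,j}\int_{\Omega}(\partial_{j}v_{i}-\partial_{i}v_{j})\partial_{i}\partial_{j}\varphi\,\dd x$ vanishes by the antisymmetry of $\partial_{j}v_{i}-\partial_{i}v_{j}$ in $(i,j)$ against the symmetry of $\partial_{i}\partial_{j}\varphi$, leaving exactly $\int_{\partial\Omega}An\cdot\nabla\varphi\,\dd\mathcal{H}$ with $A=-\nabla v+\nabla^{T}v$. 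Collecting terms yields (2.3). (Equivalently one may use Green's identity and the pointwise relation $\partial q/\partial n=n\cdot\Delta v=\D_{\partial\Omega}(An)$ on $\partial\Omega$, but the weak route above requires less boundary regularity.)

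To legitimately call $q(\cdot,t)$ a solution of (2.1) I must also verify (2.2). For $t>0$ one has $\nabla q(\cdot,t)=\q_{\Omega}\Delta v(\cdot,t)\in\tilde{L}^{p}(\Omega)=L^{p}\cap L^{2}$; since $q(\cdot,t)$ is harmonic, the mean-value formula together with H\"older's inequality — exactly the computation in the proof of Proposition~3.2 — gives $\sup_{x\in\Omega}d_{\Omega}^{n/p}(x)|\nabla q(x,t)|\leq C\|\nabla q(\cdot,t)\|_{L^{p}(\Omega)}$ and the same bound with $p$ replaced by $2$. Because $p\geq n\geq 2$, the exponents satisfy $n/p\leq 1\leq n/2$, so writing $d_{\Omega}|\nabla q|$ as $d_{\Omega}^{\,1-n/p}\big(d_{\Omega}^{\,n/p}|\nabla q|\big)$ where $d_{\Omega}$ is small and as $d_{\Omega}^{\,1-n/2}\big(d_{\Omega}^{\,n/2}|\nabla q|\big)$ where $d_{\Omega}$ is large yields $\sup_{x\in\Omega}d_{\Omega}(x)|\nabla q(x,t)|<\infty$, which is (2.2). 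Thus $q(\cdot,t)$ is a solution of (2.1) for $A=-\nabla v+\nabla^{T}v$, and when $\Omega$ is admissible for $\alpha=0$ the estimate (2.4) with this $A$ is precisely (4.1); the invariance of $C=C_{\Omega}$ under dilation, translation and rotation is inherited from that of the constant in (2.4) by Remarks~2.4~(i).

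I expect the only genuine obstacle to be boundary regularity: the two integrations by parts above, and the finiteness of $\|\nabla v-\nabla^{T}v\|_{L^{\infty}(\partial\Omega)}(t)$ on the right of (4.1), require $v(\cdot,t)\in W^{2,p}_{\mathrm{loc}}(\overline{\Omega})\cap C^{1}(\overline{\Omega})$ for $t>0$. This is where the hypothesis $p\geq n$ is used: analyticity of the Stokes semigroup on $\tilde{L}^{p}_{\sigma}$ places $v(\cdot,t)$ in the domain of the Stokes operator (hence in $W^{2,p}$ up to the $C^{2}$-boundary by the corresponding resolvent estimate), and Sobolev embedding up to the boundary then provides the $C^{1}$-regularity, with a short bootstrap raising the integrability exponent if $p=n$.
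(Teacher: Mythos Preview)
Your proof is correct and follows essentially the same route as the paper: both verify the weak form (2.3) by the same integration-by-parts computation (you run it from $\int_{\Omega}q\,\Delta\varphi$ toward the boundary integral, the paper runs it the other way), and both verify the weighted bound (2.2) by the mean-value formula combined with the $L^{p}$/$L^{2}$ split according to whether $d_{\Omega}(x)\leq 1$ or $d_{\Omega}(x)\geq 1$, exactly as in the proof of Proposition~3.2. Your closing remarks on boundary regularity are a bit more explicit than the paper's treatment but raise no new issue.
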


\begin{proof}
Although the assertion is essentially proved in \cite{AG1}, we give a proof in order to make the paper self-contained. We take an arbitrary $\varphi\in C^{2}_{c}\big(\overline{\Omega}\big)$ satisfying $\partial\varphi/\partial n=0$ on $\partial\Omega$. By $\D\ v=0$ in $\Omega$ and $v\cdot n=0$ on $\partial\Omega$, it follows that 
\begin{align*}
\int_{\partial\Omega}An\cdot \nabla \varphi \dd{\cal{H}}
&=\int_{\partial\Omega}(-\nabla v+\nabla^{T}v)n\cdot \nabla \varphi \dd{\cal{H}}\\
&=\sum_{i,j=1}^{n}\int_{\partial\Omega}(-\partial_{j}v^{i}+\partial_{i}v^{j})n^{j}\partial_{i}\varphi \dd{\cal{H}}\\
&=\sum_{i,j=1}^{n}\int_{\Omega}\big((-\partial_{j}^{2}v^{i}+\partial_{i}\partial_{j}v^{j})\partial_{i}\varphi+(-\partial_{j}v^{i}+\partial_{i}v^{j})\partial_{i}\partial_{j}\varphi \big) \dd x  \\
&=-\int_{\Omega}\Delta v\cdot \nabla \varphi \dd x 
=-\int_{\Omega}(v_{t}+\nabla q)\cdot \nabla \varphi \dd x
=\int_{\Omega}q\Delta  \varphi \dd x.
\end{align*}
So the pressure $q$ satisfies (2.3). Since $q$ is harmonic in $\Omega$, by the same way as in the proof of Proposition 3.2, we estimate
\begin{equation*}
d^{\frac{n}{s}}_{\Omega}(x)\big|\nabla q(x)\big|\leq C_{s}\big\|\nabla q\big\|_{L^{s}(\Omega)}\quad \textrm{for}\ x\in \Omega,
\end{equation*}
and all $s\in [1,\infty)$, where the time-variable of $q=q(x,\cdot)$ is suppressed. Since 
\begin{align*}
d_{\Omega}(x)\big|\nabla q(x)\big|
&= d^{\frac{n}{s}}_{\Omega}(x)\big|\nabla q(x)\big|d^{1-\frac{n}{s}}_{\Omega}(x)\\
&\leq C_{s}\big\|\nabla q\big\|_{L^{s}(\Omega)}d^{1-\frac{n}{s}}_{\Omega}(x),
\end{align*} 
we take $s=p\geq n$ for $d_{\Omega}(x)\leq 1$ and $s=2$ for $d_{\Omega}(x)\geq 1$ to estimate 
\begin{equation*}
\sup_{x\in \Omega}d_{\Omega}(x)\big|\nabla q(x)\big|\leq \tilde{C}_{p}\big\|\nabla q\big\|_{\tilde{L}^{p}(\Omega)}
\end{equation*}
with $\tilde{C}_{p}=\max\{C_{p},C_{2}\}$. Since $\q_{\Omega}$ acts as a bounded operator on $\tilde{L}^{p}$, $\nabla q$ is in $\tilde{L}^{p}$. Thus, $q$ is a solution of (2.1) for $A=-\nabla v+\nabla^{T}v$. The estimate (4.1) follows from (2.4) with a dilation invariant constant $C=C_{\Omega}$. The proof is complete.
\end{proof}

\vspace{5pt}

\begin{rems}
\noindent
(i) For $\alpha\in (0,1)$ and $p\geq n/(1-\alpha)$, the pressure $q$ (defined on $\tilde{L}^{p}$) is a solution of the Neumann problem (2.1) of type $\alpha$ and the estimate 
\begin{equation*}
\sup_{x\in \Omega}d^{1-\alpha}_{\Omega}(x)\big|\nabla q(x,t)\big|\leq C_{\alpha}\Big[\nabla v-\nabla^{T} v\Big]^{(\alpha)}_{\Omega}(t) 
\end{equation*}  
holds provided that $\Omega$ is strongly admissible.

\noindent
(ii) The pressure estimate (4.1) is proved in \cite{AG1} for the original admissible domains. The property of admissible for $\alpha=0$ is only used for (4.1) in order to prove the a priori estimate (1.7); see Proposition 6.2. 
\end{rems}

\vspace{5pt}

We recall the following notation for H\"older semi-norms of space-time functions \cite{LSU}. Let $f=f(x,t)$ be a real-valued or an $\R$-valued function defined in $Q=\Omega\times(0,T]$. For $\mu\in (0,1)$, we set the H\"older semi-norms 
\begin{equation*}
\Big[f\Big]^{(\mu)}_{t,Q}=\sup_{x\in \Omega}\Big[f\Big]^{(\mu)}_{(0,T]}(x),\
\Big[f\Big]^{(\mu)}_{x,Q}=\sup_{t\in (0,T]}\Big[f\Big]^{(\mu)}_{\Omega}(t).
\end{equation*}
In the parabolic scale for $\mu\in (0,1)$, we set 
\begin{equation*}
\Big[f\Big]^{(\mu,\frac{\mu}{2})}_{Q}=\Big[f\Big]^{(\frac{\mu}{2})}_{t,Q}+\Big[f\Big]^{(\mu)}_{x,Q}.
\end{equation*}

We estimate local H\"older norms for solutions of the Stokes equations both interior and up to boundary of $\Omega$. In the interior of $\Omega$, $\nabla q$ is smooth for spatial variables  since $q$ is harmonic in $\Omega$. Moreover, $\nabla q$ is H\"older continuous for a time variable by (4.1). We thus estimate H\"older norms of $\partial_{t}v$ and $\nabla^{2}v$ by the parabolic regularity theory \cite{LSU}. A corresponding estimate up to the boundary is more involved. By combining (4.1) and the Schauder estimate for the Stokes equations \cite{Sl77}, \cite{Sl07}, we estimate H\"older norms of $\partial_t v$, $\nabla^{2}v$, $\nabla q$ up to the boundary. We estimate H\"older norms of $\partial_t v$, $\nabla^{2}v$, $\nabla q$ by  
\begin{equation*}
N_{\delta,T}=\sup_{\delta\leq t\leq T}\big\|N(v,q)\big\|_{L^{\infty}(\Omega)}(t)\quad \textrm{for}\ \delta>0. 
\end{equation*}

\vspace{5pt}

The following H\"older estimate is proved in \cite[Proposition 3.2, Theorem 3.4]{AG1}.  
  
\vspace{5pt}
  
\begin{lem}
Let $\Omega$ be an admissible, uniformly $C^3$-domain of type $\alpha,\beta,K$ in $\R$.\\
(i) (Interior H\"older estimates)
Take $\mu\in(0,1)$, $\delta >0$, $T>0$, $R>0$. Then, there exists a constant $C=C\bigl(\delta,R,d,\mu,T, C_{\Omega}\bigr)$ such that the a priori estimate 
\begin{equation}
\Big[\nabla^{2}v\Big]^{(\mu,\frac{\mu}{2})}_{Q'}+\Big[v_t\Big]^{(\mu,\frac{\mu}{2})}_{Q'}+\Big[\nabla q\Big]^{(\mu,\frac{\mu}{2})}_{Q'}\leq CN_{\delta,T}  \tag{4.2}
\end{equation}
holds for all $\tilde{L}^{p}$-solutions $(v, q)$ for $p> n$ in $Q'=B_{x_0}(R)\times(2\delta,T]$ provided that $B_{x_0}(R)\subset\Omega$ and $x_0\in\Omega$, $2\delta<T$, where $d$ denotes the distance from $B_{x_0}(R)$ to $\partial\Omega$ and $C_{\Omega}$ is the constant in (4.1). The constant $C$ decreases in $d$.

\noindent 
(ii) (Estimates near the boundary) There exists $R_0=R_{0}(\alpha,\beta,K)>0$ such that for any $\mu\in(0,1),\ \delta>0$, $T>0$ and $R\leq R_{0}$, there exists a constant 
\begin{equation*}
C=C\bigl(\delta,\mu,T,R,\alpha,\beta,K, C_{\Omega}\bigr)
\end{equation*}
such that (4.2) holds for all $\tilde{L}^{p}$-solutions $(v, q)$ for $p> n$ in $Q'=\Omega_{x_0,R}\times(2\delta,T]$ for $\Omega_{x_0,R}=B_{x_0}(R)\cap\Omega$ and $x_0\in\partial\Omega$, $2\delta<T$.
\end{lem}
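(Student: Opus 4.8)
The plan is to reduce the statement to two well-known ingredients: the harmonic-pressure gradient estimate \eqref{4.1} established in Proposition 4.1, and the classical parabolic/Schauder regularity theory of \cite{LSU}, \cite{Sl77}, \cite{Sl07}. Throughout, the normalizing quantity $N_{\delta,T}$ gives us uniform $L^{\infty}$-bounds on $v$, $\nabla v$, $\nabla^{2}v$, $v_{t}$, $\nabla q$ on the time slab $[\delta,T]$, which is the data we will feed into the Schauder machinery.

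\textit{Part (i), interior estimates.} On $Q'=B_{x_0}(R)\times(2\delta,T]$ with $B_{x_0}(R)\subset\Omega$, the pressure $q(\cdot,t)$ is harmonic in $\Omega$ for each $t$, so by the mean value property and interior gradient estimates for harmonic functions all spatial derivatives of $\nabla q$ are controlled on a slightly smaller ball by $\|\nabla q\|_{L^{\infty}}$, hence by $N_{\delta,T}$; the dependence on the distance $d$ enters through these interior estimates, and $C$ decreases in $d$ as stated. For the time regularity of $\nabla q$ one uses \eqref{4.1}: applying it to the difference $v(\cdot,t)-v(\cdot,s)$ (the Neumann data is linear in $v$) bounds $\sup_{x}d_{\Omega}(x)|\nabla q(x,t)-\nabla q(x,s)|$ by $\|\nabla v-\nabla^{T}v\|_{L^{\infty}(\partial\Omega)}(t)-(s)$, and one then invokes the known time-Hölder continuity of $\nabla v$ (itself a consequence of interior parabolic regularity for $v$, which solves $v_{t}-\Delta v=-\nabla q$ with right-hand side already bounded). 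Once $\nabla q\in C^{\mu,\mu/2}$ locally, the equation $v_{t}-\Delta v=-\nabla q$ is a heat equation with Hölder right-hand side, so the interior Schauder estimate \cite{LSU} gives the bounds on $[\nabla^{2}v]^{(\mu,\mu/2)}$ and $[v_{t}]^{(\mu,\mu/2)}$ on the shrunken cylinder $Q'$, all in terms of $N_{\delta,T}$.

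\textit{Part (ii), boundary estimates.} Here one works in the local coordinates flattening $\partial\Omega$ near $x_0$, valid on scale $R_0=R_0(\alpha,\beta,K)$. The difficulty is that $\nabla q$ is no longer smooth up to the boundary, so the decoupling used in the interior fails; instead one must treat $(v,q)$ as a coupled Stokes system and apply the boundary Schauder estimate for the Stokes equations of \cite{Sl77}, \cite{Sl07}, which estimates $[\nabla^{2}v]^{(\mu,\mu/2)}$, $[v_{t}]^{(\mu,\mu/2)}$ and $[\nabla q]^{(\mu,\mu/2)}$ on $\Omega_{x_0,R}\times(2\delta,T]$ in terms of lower-order norms of $(v,q)$ on a larger cylinder. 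The new point, relative to a purely local Schauder estimate, is that the Stokes Schauder estimate requires an a priori Hölder bound on $\nabla q$ (or on $q$ itself) that cannot be obtained by localization alone; this is exactly where \eqref{4.1} (and Remarks 4.2(i) if one wants the Hölder-in-space version) is used to convert the global $L^{\infty}$-control of $\nabla v-\nabla^{T}v$ on $\partial\Omega$ into pointwise control of $d_{\Omega}|\nabla q|$, which combined with harmonicity of $q$ inside yields the needed Hölder continuity of $\nabla q$ on $\Omega_{x_0,R}$. The constants depend on $\alpha,\beta,K$ through the coordinate charts, on $R$ through the Schauder estimate, on $\delta,\mu,T$ in the usual way, and on $C_{\Omega}$ through \eqref{4.1}.

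\textit{Main obstacle.} The essential difficulty is the boundary case (ii): establishing enough regularity of the pressure up to $\partial\Omega$ to legitimately invoke the Stokes Schauder theory. This is not a local matter — it genuinely uses the admissibility of $\Omega$ via the harmonic-pressure gradient estimate \eqref{4.1} — and it is the step where the hypothesis "$\Omega$ admissible, uniformly $C^{3}$" is actually consumed. Since both Proposition 4.1 and the cited Schauder estimates are available, the argument is then a matter of bookkeeping the dependencies of the constants, and I would simply refer to \cite[Proposition 3.2, Theorem 3.4]{AG1} where this has been carried out in detail.
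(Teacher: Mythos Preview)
Your proposal is correct and matches the paper's approach exactly. The paper does not give a proof of this lemma either: it simply cites \cite[Proposition 3.2, Theorem 3.4]{AG1} and, in the paragraph preceding the statement, gives precisely the informal sketch you wrote --- harmonicity of $q$ plus (4.1) for $\nabla q$ followed by \cite{LSU} in the interior, and (4.1) combined with the Stokes Schauder estimates \cite{Sl77}, \cite{Sl07} near the boundary.
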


\vspace{5pt}

\section{Uniqueness in a half space}

\vspace{5pt}

The goal of this section is to prove the uniqueness for the Stokes equations (1.1)--(1.4) in a half space (Theorem 5.1). The  uniqueness theorem on $L^{\infty}$ is known for continuous velocity at time zero \cite{Sl03}. However, a blow-up limit may not be continuous nor even bounded as $t\downarrow 0$. Thus, we need a stronger uniqueness theorem in order to apply it for a blow-up limit. We prove a uniqueness theorem under suitable sup-bounds for velocity and pressure gradient. 

\vspace{5pt}

\begin{thm}
Let $v\in C^{2,1}\big(\overline{\mathbb{R}^{n}_{+}}\times (0,T]\big)$ and $\nabla q\in C\big(\overline{\mathbb{R}^{n}_{+}}\times (0,T]\big)$ satisfy (1.1)--(1.3), 
\begin{align*}
&\sup_{0<t\leq T}t^{\gamma}\big\|N(v,q)\big\|_{L^{\infty}(\R_{+})}(t)<\infty,    \tag{5.1}  \\
&\sup\left\{t^{\gamma+\frac{1}{2}}x_{n}\big|\nabla q(x,t)\big|\ \Big|\ x\in \R_{+},\ 0<t\leq T  \right\}<\infty,    \tag{5.2}
\end{align*}
for some $\gamma\in [0,1/2)$. Assume that $(v, q)$ satisfies 
\begin{equation*}
\int_{0}^{T}\int_{\R_{+}}\big(v\cdot(\partial_{t}\varphi+\Delta \varphi)-\nabla q\cdot \varphi\big)\textrm{d}x\textrm{d}t =0,     \tag{5.3}
\end{equation*}
for all $\varphi\in C^{\infty}_{c}(\R_{+}\times [0,T))$. Then, $v\equiv 0$ and $\nabla q\equiv 0$.
\end{thm}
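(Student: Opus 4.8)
The plan is to split the argument into two parts: first extract decay/growth information from the bounds \eqref{5.1}--\eqref{5.2} that is enough to pass to a clean duality identity, then run a duality argument against the backward Stokes system with smooth compactly supported data. The key point is that the a priori bounds allow integration against the adjoint equation even though $v$ may blow up like $t^{-\gamma}$ as $t\downarrow 0$, because $\gamma<1/2$ makes $t^{-\gamma}$ locally integrable in time; similarly the pressure gradient is controlled by \eqref{5.2} with an integrable-in-time weight. First I would fix $\psi\in C^{\infty}_{c}(\R_{+})$ with $\D\,\psi=0$ and $0<\tau\le T$, and let $(w,\pi)$ solve the backward Stokes problem $-\partial_{t}w-\Delta w+\nabla\pi=0$, $\D\,w=0$ in $\R_{+}\times(0,\tau)$, $w|_{\partial\R_{+}}=0$, $w(\cdot,\tau)=\psi$; this is the time-reversed Stokes semigroup in a half space, for which explicit solution formulas and all needed $L^{1}$- and pointwise decay estimates are available (these are precisely the $L^{1}$-type half-space results collected in Appendix A, which I would invoke).

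Next I would test the weak formulation \eqref{5.3} with a sequence $\varphi=\varphi_{m}$ that approximates $w$: cut off $w$ in space by $\chi(x/m)$ and in time near $t=0$ by a function $\eta_{\epsilon}(t)$ that vanishes on $(0,\epsilon)$ and equals $1$ on $(2\epsilon,\tau)$, and mollify so that $\varphi_{m}\in C^{\infty}_{c}(\R_{+}\times[0,T))$. Substituting into \eqref{5.3} and integrating by parts in $t$, the bulk terms cancel by the adjoint equation, leaving boundary-in-time terms at $t=2\epsilon$ and $t=\tau$, together with commutator terms coming from $\nabla\chi(x/m)$, $\nabla^{2}\chi(x/m)$ and $\eta_{\epsilon}'$. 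The spatial commutators are controlled by the spatial decay of $w$ and $\nabla\pi$ (from the half-space kernel estimates) against the polynomial-in-$x$-weighted bounds \eqref{5.1}--\eqref{5.2}, and vanish as $m\to\infty$. For the time-cutoff term $\int\!\!\int \eta_{\epsilon}'(t)\,v\cdot w\,\dd x\,\dd t$, the bound $|v(\cdot,t)|\lesssim t^{-\gamma}$ combined with the uniform boundedness of $w$ and $|\eta_{\epsilon}'|\lesssim \epsilon^{-1}$ on $(\epsilon,2\epsilon)$ gives a contribution of size $\epsilon^{-1}\cdot\epsilon^{1-\gamma}=\epsilon^{-\gamma}$, which does \emph{not} obviously vanish; to handle this I would instead integrate by parts once more so that the troublesome term becomes $\int\!\!\int \eta_{\epsilon}(t)\,\partial_{t}v\cdot w$ plus a term with $\eta_{\epsilon}$ (not $\eta_{\epsilon}'$) hitting $v$, and use $|\partial_{t}v(\cdot,t)|\lesssim t^{-1-\gamma}$ from \eqref{5.1} together with $\int_{0}^{\cdot}t^{-\gamma}\dd t<\infty$; alternatively, use that $v\to 0$ in an averaged sense as $t\downarrow 0$, which again follows from $\int_{0}^{\epsilon}\|v\|_{\infty}\dd t\lesssim \epsilon^{1-\gamma}\to0$, so that a careful choice of $\eta_{\epsilon}$ (piecewise linear on $(\epsilon,2\epsilon)$) makes the limit vanish after averaging in $\epsilon$. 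In the limit one is left with $\int_{\R_{+}}v(x,\tau)\cdot\psi(x)\,\dd x=0$.

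Finally, since $\psi$ ranges over a dense subset of $L^{2}_{\sigma}(\R_{+})$ and $v(\cdot,\tau)\in L^{\infty}\cap$ (something locally $L^{2}$) by \eqref{5.1}, I would conclude $\p\, v(\cdot,\tau)=0$; but $v$ is itself a solenoidal field vanishing on the boundary, so $v(\cdot,\tau)=0$. As $\tau\in(0,T]$ was arbitrary, $v\equiv 0$. Then \eqref{1.1} forces $\nabla q=\partial_{t}v-\Delta v\equiv 0$. The main obstacle is the behavior at $t=0$: making the duality identity rigorous when $v$ and $\nabla q$ are only known to blow up at the integrable rates $t^{-\gamma}$ and $t^{-\gamma-1/2}$, which is exactly why the hypothesis $\gamma<1/2$ is needed and why one cannot simply quote the bounded-velocity uniqueness theorem of \cite{Sl03}; this is where the refined half-space estimates of \cite{A2} and Appendix A do the real work, controlling both the spatial tails (against the weights $x_{n}$ and polynomial growth implicit in \eqref{5.1}--\eqref{5.2}) and the temporal singularity.
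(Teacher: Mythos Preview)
Your overall strategy---duality against a backward Stokes problem in the half space, then a Liouville/de Rham step to recover $v\equiv 0$---is the same as the paper's, but the specific dual problem you choose creates a genuine gap.

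You take $(w,\pi)$ to be the \emph{homogeneous} backward Stokes flow with terminal data $w(\cdot,\tau)=\psi\in C^{\infty}_{c,\sigma}$, i.e.\ $w(\cdot,t)=S(\tau-t)\psi$, and then claim that ``all needed $L^{1}$- and pointwise decay estimates are available'' from Appendix~A. They are not. The paper is explicit (just after (5.10)) that $S(t)f\notin L^{1}(\R_{+})$ for general $f\in L^{2}_{\sigma}\cap L^{1}$; what \emph{is} available is $\nabla S(t)f\in L^{1}$. Appendix~A (Proposition~A.1) gives $\varphi\in S$ and $\nabla\pi\in L^{\infty}(0,T;L^{1})$ only for the \emph{inhomogeneous} dual problem with forcing $\partial_{\textrm{tan}}f$, precisely because that forcing carries a derivative and one can write the solution via $\nabla S(t)$. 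Without $w\in L^{1}$ (and the companion bound $x_{n}^{-1}w\in L^{1}$ needed to control $\int\nabla q\cdot w$ against the weight in (5.2)), your spatial-cutoff commutators do not vanish: pairing the merely bounded $v$ against $w\,\Delta\chi(\cdot/m)$ and $\nabla w\cdot\nabla\chi(\cdot/m)$ requires exactly the $L^{1}$ control you do not have. This is why the paper does \emph{not} test against $S(\tau-t)\psi$, but instead solves (5.7)--(5.10) with right-hand side $\partial_{\textrm{tan}}f$ for $f\in C^{\infty}_{c,\sigma}(\R_{+}\times(0,T))$, obtains $\varphi\in S$, and proves first that $\partial_{\textrm{tan}}v\equiv 0$ (via (5.6) and Proposition~5.3). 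From $\partial_{\textrm{tan}}v\equiv 0$ one gets $v^{n}\equiv 0$ by the divergence condition and the boundary condition, hence $\nabla q$ is independent of $x_{n}$ and vanishes by (5.2); finally $v\equiv 0$ follows from heat-equation uniqueness (Lemma~B.2).

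A secondary issue: your time cutoff $\eta_{\epsilon}$ near $t=0$ is unnecessary and is the source of the $\epsilon^{-\gamma}$ trouble you struggle with. The weak form (5.3) is stated for $\varphi\in C^{\infty}_{c}(\R_{+}\times[0,T))$, which allows $\varphi(\cdot,0)\neq 0$; the initial condition $v(\cdot,0)=0$ is already encoded in (5.3). Neither the paper's extension of (5.3) to $\varphi\in S$ (Proposition~5.4, Proposition~A.2) nor the subsequent duality computation introduces any cutoff at $t=0$; the cutoffs used there are near $x_{n}=0$, near $|x|=\infty$, and near $t=T$. Once you use the correct dual problem, the $t\downarrow 0$ singularity of $v$ is absorbed automatically because $t^{-\gamma}$ is integrable and the test function is uniformly $L^{1}$ in time.
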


\vspace{5pt}

We prove Theorem 5.1 from the following stronger assertion. 

\vspace{5pt}

\begin{lem}
Let $v\in C^{2,1}\big(\overline{\mathbb{R}^{n}_{+}}\times (0,T]\big)$ and $\nabla q\in C\big(\overline{\mathbb{R}^{n}_{+}}\times (0,T]\big)$ satisfy (1.1)--(1.3), 
\begin{align*}
&\sup_{0<t\leq T}t^{\gamma}\|v\|_{L^{\infty}(\R_{+})}(t)<\infty,    \tag{5.4}  \\
&\sup\left\{ t^{\gamma+\frac{1}{2}}(x_n^{2}+t)^{\frac{1}{2}}\big|\nabla q(x,t)\big|\ \Big|\ x\in \R_{+},\ 0<t\leq T  \right\}<\infty,     \tag{5.5}  
\end{align*}
for $\gamma\in [0,1/2)$, and $\nabla v$ is bounded in $\R_{+}$ for $t\in (0,T)$. Assume that $(v, q)$ satisfies (5.3) for all $\varphi\in C^{\infty}_{c}(\R_{+}\times [0,T))$. Then, $v\equiv 0$ and $\nabla q\equiv 0$.
\end{lem}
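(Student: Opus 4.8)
The plan is to reduce the uniqueness assertion to the scalar heat equation by decoupling pressure and velocity, exploiting the half-space geometry. First I would use the equation $\partial_t v-\Delta v=-\nabla q$ together with $\D v=0$ and the boundary condition $v=0$ on $\partial\R_+$: taking the divergence gives $\Delta q=0$ in $\R_+\times(0,T]$, so $q(\cdot,t)$ is harmonic. The bound (5.5) (which forces $x_n|\nabla q|\to 0$ in the relevant scale) together with harmonicity and an even-reflection / Liouville argument in the spirit of Lemma 2.5 should force $\nabla q(\cdot,t)\equiv 0$ for each $t$, once one also uses the Neumann-type boundary relation $\partial_n q=\Delta_{\mathrm{tan}}$-type data coming from $v=0$ on the boundary (here the tangential components of $v$ vanish identically on $\partial\R_+$, and the equation restricted to the boundary gives $\partial_n q=\partial_n^2 v_n = -\nabla'\cdot(\partial_n v')$ by $\D v=0$, a surface-divergence structure). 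The key point is that under the stronger weight in (5.5) the harmonic pressure gradient, decaying like $(x_n^2+t)^{-1/2}$ up to the time-dependent factor, must vanish; this is where I expect the first real work, since one must be careful that the bound is only an a priori bound and $q$ need not be globally integrable.

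Once $\nabla q\equiv 0$, the problem collapses to: $v\in C^{2,1}(\overline{\R_+^n}\times(0,T])$ solves the heat equation $\partial_t v-\Delta v=0$ componentwise in $\R_+\times(0,T]$, with $v=0$ on $\partial\R_+\times(0,T]$, with $\sup_{0<t\le T}t^\gamma\|v\|_{L^\infty}(t)<\infty$ and $\nabla v$ bounded for each $t$, and the weak identity (5.3) (now with $\nabla q$ absent) holding against all $\varphi\in C^\infty_c(\R_+\times[0,T))$. The latter encodes that $v$ attains zero initial data in a suitably weak sense. The plan is then to invoke the uniqueness theorem for the heat equation promised in Appendix B: since $t^\gamma\|v(\cdot,t)\|_\infty$ is bounded with $\gamma<1/2$, the singularity at $t=0$ is mild enough (subcritical relative to the heat kernel scaling) that the only solution with this growth and vanishing weak initial trace is $v\equiv 0$. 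Concretely I would extend $v$ by odd reflection in $x_n$ to obtain a solution of the heat equation on all of $\R^n\times(0,T]$ with the same weighted bound, test against the backward heat semigroup, and conclude $v\equiv 0$ by the $L^1$-type / duality arguments for the heat equation reviewed in the appendices.

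Assembling these two steps proves Lemma 5.2. The main obstacle, as indicated, is the pressure step: one must extract $\nabla q\equiv 0$ from harmonicity plus the anisotropic weighted bound (5.5) without assuming integrability, and this is precisely why the hypothesis in (5.5) carries the extra factor $(x_n^2+t)^{1/2}$ rather than just $x_n$ as in (5.2) — it gives enough decay in the interior (large $x_n$ and also small $t$) to run a Liouville-type argument after reflection. A secondary technical point is justifying that the weak formulation (5.3), which pairs $v$ against test functions vanishing near $t=T$ but not near $t=0$, genuinely transmits the zero initial condition to the reflected heat solution; this is handled by the standard density and approximation arguments, but care is needed because $v$ is only controlled by $t^{-\gamma}$ near $t=0$ so one cannot simply integrate by parts in $t$ without a cutoff and a limiting argument. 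Finally, Theorem 5.1 follows from Lemma 5.2 once one checks that the hypotheses (5.1)--(5.2) of the theorem imply the hypotheses (5.4)--(5.5) of the lemma — the velocity bound (5.4) is immediate from (5.1), the gradient bound on $v$ is part of $N(v,q)$ in (5.1), and (5.2) combined with the interior gradient estimate for the harmonic function $q$ (bounding $|\nabla q|$ near the boundary in terms of its behaviour away from the boundary, which brings in the extra $t^{1/2}$ through parabolic scaling) yields (5.5).
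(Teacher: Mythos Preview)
Your plan contains a genuine gap in the pressure step. You propose to conclude $\nabla q\equiv 0$ from the harmonicity of $q(\cdot,t)$ in $\R_+$ together with the weighted bound (5.5), via an even-reflection / Liouville argument. This cannot work as stated, for two reasons. First, the Neumann data for $q$ on $\partial\R_+$ is $\partial_n q=-\nabla'\cdot(\partial_n v')$, which is a priori nonzero, so the even extension of $q$ is \emph{not} harmonic across $\{x_n=0\}$; there is no reflection that produces a globally harmonic function without first knowing something about $v$ on the boundary. Second, and more decisively, the bound (5.5) only controls $|\nabla q|$ in the normal direction: for fixed $t$ it gives at best $|\nabla q(x,t)|\lesssim x_n^{-1}$ as $x_n\to\infty$ and $|\nabla q|\lesssim t^{-1/2}$ near the boundary, but says nothing about decay as $|x'|\to\infty$. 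A bounded harmonic function in a half space need not be constant (Poisson extensions of arbitrary bounded boundary data give counterexamples), so no Liouville theorem applies. In short, you cannot decouple the pressure from the velocity at this stage; the coupling through the Stokes system is essential.

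The paper's argument proceeds in the opposite order. One first proves $\partial_{\mathrm{tan}}v\equiv 0$ by a duality argument: for $f\in C^\infty_{c,\sigma}$ one solves the backward Stokes problem with right-hand side $\partial_{\mathrm{tan}}f$ to produce a test function $\varphi$ in a carefully designed class $S$ (with $\varphi$, its derivatives, and crucially $x_n^{-1}\varphi$ all in $L^\infty(0,T;L^1)$), extends the weak identity (5.3) to such $\varphi$ (this is where $\gamma<1/2$ and the weight $(x_n^2+t)^{1/2}$ in (5.5) are actually used, to make $\nabla q\cdot\varphi$ integrable on $\R_+\times(0,T)$), and then reads off $\int\partial_{\mathrm{tan}}v\cdot f=0$. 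By de Rham plus a Liouville argument applied to $\partial_{\mathrm{tan}}v$ (not to $q$), this gives $\partial_{\mathrm{tan}}v\equiv 0$. Only then does the divergence-free condition force $\partial_n v^n=0$, hence $v^n\equiv 0$, hence $\partial_n q\equiv 0$; now $\nabla q$ is independent of $x_n$, and sending $x_n\to\infty$ in (5.5) trivially yields $\nabla q\equiv 0$. The remaining heat-equation uniqueness for the tangential components is then exactly the step you describe. So your second step is correct, but the route to it requires the $L^1$-dual Stokes machinery rather than a direct elliptic argument on $q$.
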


The uniqueness of the Stokes equations on $L^{\infty}(\R_{+})$ was first proved by V. A. Solonnikov based on a duality argument \cite[Theorem 1.1]{Sl03}. However, the result was restricted to continuous velocity at time zero. Recently, the author proved some  uniqueness theorem without assuming continuity of velocity at time zero. In the sequel, we give a proof for Lemma 5.2 based on the proof in \cite{A2}.   \\

We sketch the proof of Lemma 5.2. An essential step is to prove $\partial_{\textrm{tan}} v\equiv 0$. Once we know $\partial_{\textrm{tan}}v\equiv 0$, then $\nabla q\equiv 0$ and $v\equiv 0$ easily follow. In fact, the divergence-free condition for the velocity implies 
\begin{equation*}
\frac{\partial v^{n}}{\partial x_n}=-\sum_{j=1}^{n-1}\frac{\partial v^{j}}{\partial x_j}=0.
\end{equation*}
By the Dirichlet boundary condition, $v^{n}\equiv 0$ and $\partial q/\partial x_n\equiv 0$ follow. Thus, $\nabla q=\nabla q(x',t)$ is independent of the $x_n$-variable. The condition (5.5) implies $\nabla q\equiv 0$. By the uniqueness of the heat equation, $v\equiv 0$ follows. (We give a short proof for the uniqueness of the heat equation under the bound (5.4) in Lemma B.2).

Let $C^{\infty}_{c,\sigma}\big(\R_{+}\times (0,T)\big)$ denote the space of all smooth solenoidal vector fields with compact support in $\R_{+}\times (0,T)$. We prove
\begin{equation*}
\int_{0}^{T}\int_{\R_{+}}\partial_{\textrm{tan}}v(x,t)\cdot f(x,t)\dd x\dd t=0    \tag{5.6}
\end{equation*}
for all $f\in C^{\infty}_{c,\sigma}\big(\R_{+}\times (0,T)\big)$. Then, $\partial_{\textrm{tan}}v\equiv 0$ follows from de Rham's theorem.

\vspace{5pt}

\begin{prop}
Let $u\in L^{\infty}(\R_{+})\cap C^{1}\big(\overline{\R_{+}}\big)$ satisfy $\D\ u=0$ in $\R_{+}$, $u=0$ on $\{x_n=0\}$ and 
\begin{equation*}
\int_{\R_{+}}u\cdot f\dd x=0
\quad \textrm{for all}\ f\in C^{\infty}_{c,\sigma}(\R_{+}).
\end{equation*}
Then, $u\equiv 0$. 
\end{prop}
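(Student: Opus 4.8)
The plan is to exploit the characterization of solenoidal vector fields: a function $u\in L^{2}_{\mathrm{loc}}$ that is $L^{2}$-orthogonal to all $f\in C^{\infty}_{c,\sigma}(\R_{+})$ must be a gradient, say $u=\nabla p$ for some scalar $p\in L^{2}_{\mathrm{loc}}(\R_{+})$ (de Rham / Helmholtz-type lemma on the open set $\R_{+}$). Combined with $\D\ u=0$ this forces $\Delta p=0$, so $p$ is harmonic and $u=\nabla p$ is a bounded harmonic vector field on the half space. The boundary condition $u=0$ on $\{x_n=0\}$ then translates into $\nabla p=0$ on the flat boundary. So after this reduction the claim becomes: a bounded harmonic gradient field on $\overline{\R_{+}}$ which vanishes on $\{x_n=0\}$ is identically zero.

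To finish from there I would reflect. Since $\nabla p$ vanishes on $\{x_n=0\}$, in particular $\partial_{x_n} p=0$ there, so the even extension $\tilde p(x',x_n)=p(x',|x_n|)$ is $C^{1}$ across the hyperplane and, by the same Weyl/reflection argument used in the proof of Lemma 2.5, weakly harmonic in all of $\R$; by Weyl's lemma it is smooth and harmonic on $\R$. Its gradient $\nabla\tilde p$ is bounded on $\R$ (it equals $\nabla p$ up to a sign on each half space, and $u=\nabla p$ is in $L^{\infty}$). By Liouville's theorem for bounded harmonic functions, $\nabla\tilde p$ is constant; but the tangential components $\partial_{x'}p$ vanish on $\{x_n=0\}$, and so does $\partial_{x_n}p$, hence the constant is zero. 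Therefore $u=\nabla p\equiv 0$.

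The main obstacle is the very first step: justifying that $L^{2}$-orthogonality to $C^{\infty}_{c,\sigma}(\R_{+})$ yields a (locally integrable) potential $p$ with enough regularity that $\Delta p=0$ makes sense and Weyl's lemma applies. This is exactly the de Rham-type statement the author alludes to just before the proposition; on a half space (which is star-shaped, hence has trivial de Rham cohomology) it is standard, but one must be a little careful because $u$ is only assumed bounded and $C^{1}$ up to the boundary, not globally $L^{2}$ — so the potential is only local, $p\in L^{2}_{\mathrm{loc}}$, which is all that is needed. Once $p$ is in hand, $\D\ u=0$ gives $\Delta p=0$ weakly, elliptic regularity (Weyl) upgrades $p$ to a classical harmonic function, and the reflection-plus-Liouville argument is routine and parallels Lemma 2.5 already proved in the paper.
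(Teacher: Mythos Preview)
Your proposal is correct and follows essentially the same route as the paper: de Rham gives a potential $p$ with $u=\nabla p$, divergence-freeness makes $p$ harmonic, the Neumann condition $\partial_{x_n}p=0$ on $\{x_n=0\}$ allows an even reflection to a harmonic function on $\R$, and Liouville for the bounded gradient together with the boundary vanishing forces $u\equiv 0$. The only cosmetic difference is that the paper invokes de Rham directly to obtain $p\in C^{2}(\overline{\R_{+}})$ from $u\in C^{1}(\overline{\R_{+}})$, whereas you first take $p\in L^{2}_{\mathrm{loc}}$ and then upgrade via Weyl's lemma; both are fine.
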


\begin{proof}
By de Rham's theorem (e.g., \cite[Theorem 1.1]{SS}), there exists a function $\Phi\in C^{2}\big(\overline{\R_{+}}\big)$ such that $u=\nabla \Phi$. Since $\D\ u=0$ in $\R_{+}$ and $u^{n}=0$ on $\{x_n=0\}$, the function $\Phi$ is harmonic in $\R_{+}$ and $\partial\Phi/\partial x_n=0$ on $\{x_n=0\}$. We extend $\Phi$ to $\R$ by the even extension, i.e.,  
\begin{equation*}
\tilde{\Phi}(x',x_n)=
\begin{cases}
&\Phi(x',x_n)\quad\hspace{8pt} \textrm{for}\  x_n\geq 0,\\
&\Phi(x',-x_n)\quad \textrm{for}\ x_n< 0.
\end{cases}
\end{equation*}
Then, $\tilde{\Phi}\in C^{2}(\R)$ is harmonic in $\R$. We apply the Liouville theorem for $\nabla\tilde{\Phi}\in L^{\infty}(\R)$ and conclude that $\nabla\tilde{\Phi}$ is constant. Since $\nabla \Phi$ is vanishing on $\{x_n=0\}$, $\nabla\Phi\equiv 0$ follows.  
\end{proof}

\vspace{5pt}

In the sequel, we prove (5.6). We consider the dual problem,\\
\begin{align*}
-\partial_{t}\varphi-\Delta \varphi+\nabla \pi&=\partial_{\textrm{tan}}f\hspace{10pt} \textrm{in}\quad \mathbb{R}^{n}_{+}\times (0,T),   \tag{5.7}\\
\textrm{div}\ \varphi &=0\hspace{28pt}\textrm{in}\quad \R_{+}\times(0,T), \tag{5.8}\\
\varphi& =0\hspace{28pt}\textrm{on}\quad  \partial\R_{+}\times(0,T), \tag{5.9} \\
\varphi& =0\hspace{28pt}\textrm{on}\quad \R_{+}\times\{t=T\}. \tag{5.10}
\end{align*}
It is proved in \cite{A2} (see Proposition A.1) that solutions $(\varphi,  \pi)$ exist and satisfy $\varphi\in S$ and $\nabla \pi\in L^{\infty}\big(0,T; L^{1}\big(\R_{+}\big)\big)$, where

\begin{equation*}
\begin{aligned}
S=\bigg\{& \varphi\in C^{\infty}\big(\overline{\R_{+}}\times [0,T]\big)\ \Big|\ \varphi, \nabla\varphi,\nabla^{2}\varphi, \partial_{t}\varphi, x_n^{-1}\varphi \in L^{\infty}\big(0,T; L^{1}\big(\R_{+}\big)\big),  \\
&  \varphi=0\ \textrm{on}\ \{x_n=0\}\cup\{t=T\}\ \bigg\}.    
\end{aligned}
\end{equation*}
It is noted that the solution $\varphi$ is in $L^{\infty}(0,T; L^{1})$ by $\nabla S(t)f\in L^{1}$ \cite{GMS} (\cite[Proposition 3.1]{A2}) although $S(t)f\notin L^{1}$ for general $f$ (i.e., there exists some $f\in L^{2}_{\sigma}\cap L^{1}$ such that $S(t)f\notin L^{1}$  \cite{DHP}, \cite{Sa}).\\

We complete the proof of Lemma 5.2 and then give a proof for the following Proposition 5.4 later in Appendix A.

 \vspace{5pt}
 
\begin{prop}
Under the assumption of Lemma 5.2, the initial condition (5.3) is extendable for all $\varphi \in {S}$.
\end{prop}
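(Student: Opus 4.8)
The plan is to establish that the identity \eqref{5.3}, which is assumed for $\varphi\in C^\infty_c(\R_+\times[0,T))$, extends to all $\varphi\in S$ by a density/approximation argument. The point is that each $\varphi\in S$ fails to be compactly supported in two respects: it need not decay fast in the spatial variable, and it need not vanish in a neighbourhood of $\{t=0\}$. I would handle these separately. For the spatial direction, fix a cut-off $\chi_R(x)=\chi(x/R)$ with $\chi\in C^\infty_c(\R^n)$, $\chi\equiv 1$ on $B_1$; then $\chi_R\varphi$ has the form "admissible test function times spatial cut-off'', and on substituting $\chi_R\varphi$ into \eqref{5.3} the extra commutator terms involve $\nabla\chi_R$ and $\Delta\chi_R$, hence carry factors $R^{-1}$ and $R^{-2}$ and are supported on the annulus $R\le|x|\le 2R$. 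Using the $L^\infty$-bounds \eqref{5.4}--\eqref{5.5} for $(v,\nabla q)$ together with the $L^1$-in-space membership of $\varphi,\nabla\varphi,\nabla^2\varphi,\partial_t\varphi$ from the definition of $S$ (more precisely, the tail of these $L^1$-norms over $\{|x|\ge R\}$ goes to zero), these commutator terms tend to $0$ as $R\to\infty$. One must also be a little careful with the $\nabla q\cdot\varphi$ term: $\varphi$ itself is $L^1$ in space uniformly in $t$, and $\nabla q$ is bounded by $t^{-\gamma-1/2}(x_n^2+t)^{-1/2}$, which is locally integrable enough near $\{x_n=0\}$ for the product to make sense; this is exactly why \eqref{5.5} is stated with the weight $(x_n^2+t)^{1/2}$ rather than just $x_n$.

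For the time direction, introduce a cut-off $\eta_\varepsilon(t)$ that is $0$ for $t\le\varepsilon$, $1$ for $t\ge 2\varepsilon$, with $|\eta_\varepsilon'|\le C/\varepsilon$, and test \eqref{5.3} against $\eta_\varepsilon\chi_R\varphi$ (legitimate after the spatial truncation, since this is now genuinely compactly supported in $\R_+\times(0,T)$, and $\varphi$ vanishes at $t=T$). The new term produced is $-\int\!\!\int v\cdot\varphi\,\eta_\varepsilon'\,\chi_R\,\dd x\dd t$, concentrated on $\varepsilon\le t\le 2\varepsilon$. Here I would use the temporal decay \eqref{5.4}: $\|v(\cdot,t)\|_{L^\infty}\le C t^{-\gamma}$ with $\gamma<1/2$, while $\|\varphi(\cdot,t)\|_{L^1}$ is bounded uniformly in $t$, so the term is bounded by $\varepsilon^{-1}\int_\varepsilon^{2\varepsilon} t^{-\gamma}\dd t\le C\varepsilon^{-\gamma}$, which tends to $0$ as $\varepsilon\to 0$ precisely because $\gamma<1/2$ (indeed $\gamma\ge 0$ already suffices here, but the genuine constraint $\gamma<1/2$ enters through the pressure bounds \eqref{5.5}). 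Letting first $R\to\infty$ and then $\varepsilon\to 0$ recovers \eqref{5.3} for the given $\varphi\in S$, now with all integrals absolutely convergent by the $L^\infty\times L^1$ pairing; a dominated-convergence argument in each remaining integral closes the estimate.

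The main obstacle I anticipate is justifying that the weighted integrals defining the extended identity actually converge absolutely, i.e. verifying the pairing $L^\infty$-bounds on $(v,\nabla q)$ against the $L^1$-in-space quantities in the definition of $S$ near the boundary $\{x_n=0\}$ and for small $t$. In particular the term $\int\!\!\int \nabla q\cdot\varphi$ requires care: one needs $\varphi/x_n\in L^\infty(0,T;L^1(\R_+))$ (which is built into $S$) combined with the bound $x_n|\nabla q|\le C t^{-\gamma-1/2}$ coming from \eqref{5.2}/\eqref{5.5}, and then the time-integrability follows because $\gamma+1/2<1$. Once all four integrals in \eqref{5.3} are shown to be absolutely convergent for $\varphi\in S$, the cut-off limits above are routine applications of dominated convergence, and the proposition follows.
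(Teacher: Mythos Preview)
Your approximants $\eta_\varepsilon\chi_R\varphi$ are not admissible in \eqref{5.3}. The test functions there lie in $C^\infty_c(\R_+\times[0,T))$ with $\R_+$ the \emph{open} half space, so they vanish in a full neighbourhood of $\{x_n=0\}$. An element $\varphi\in S$ only vanishes on $\{x_n=0\}$, not near it, and your spatial cut-off $\chi_R$ acts only at large $|x|$; hence $\eta_\varepsilon\chi_R\varphi$ still has support touching $\{x_n=0\}$ and cannot be plugged into \eqref{5.3}. You are missing a third cut-off $\rho_m(x_n)$ in the normal variable, equal to $0$ for $x_n\le 1/m$ and to $1$ for $x_n\ge 2/m$. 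The commutator terms this produces carry factors $\partial_{x_n}\rho_m$, $\partial_{x_n}^2\rho_m$ of size $m$, $m^2$ on the thin slab $\{1/m\le x_n\le 2/m\}$; to kill them you must use the Dirichlet condition $v=0$ on $\{x_n=0\}$ together with the bound $\|\varphi(\cdot,x_n,t)\|_{L^1(\mathbb{R}^{n-1})}\le x_n\sup_s\|\partial_n\varphi(\cdot,s,t)\|_{L^1(\mathbb{R}^{n-1})}$ coming from $\varphi|_{x_n=0}=0$. This is exactly the content of the paper's Proposition~A.2, and it is the substantive step in the proof; without it your argument does not close.

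A secondary point: your temporal cut-off $\eta_\varepsilon$ near $t=0$ is not needed. Test functions in $C^\infty_c(\R_+\times[0,T))$ are already allowed to be nonzero at $t=0$ (the support is compact in $[0,T)$, which permits touching $t=0$), so there is nothing to approximate there. What does require a cut-off is the endpoint $t=T$: the original test functions vanish in a neighbourhood of $t=T$, whereas $\varphi\in S$ only vanishes at $t=T$. Your absolute-convergence remarks about the pairing $\nabla q\cdot\varphi$ via $x_n^{-1}\varphi\in L^\infty(0,T;L^1)$ and $x_n|\nabla q|\le Ct^{-\gamma-1/2}$ with $\gamma+1/2<1$ are correct and are indeed what is needed once the boundary cut-off is in place.
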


\begin{proof}[Proof of Lemma 5.2]
For $f\in C^{\infty}_{c,\sigma}\big(\R_{+}\times (0,T)\big)$, there exists a smooth solution $(\varphi,\pi)$ for (5.7)--(5.10) satisfying $\varphi\in S$ and $\nabla\pi\in L^{\infty}(0,T; L^{1})$ by Proposition A.1. Since the condition (5.3) is extendable for all test functions in $S$ by Proposition 5.4, it follows that   
\begin{equation*} 
\begin{aligned}
\int_{0}^{T}\int_{\R_{+}}v\cdot \partial_{\textrm{tan}}f \dd x\dd t
&=\int_{0}^{T}\int_{\R_{+}}v\cdot (-\partial_{t} \varphi-\Delta \varphi+\nabla \pi) \dd x\dd t   \\
&=\int_{0}^{T}\int_{\R_{+}}(v\cdot \nabla \pi-\nabla q\cdot \varphi)\dd x\dd t.   
\end{aligned}
\end{equation*}
Since $v(\cdot, t)\in L^{\infty}(\R_{+})\cap C^{2}(\overline{\R_{+}})$ satisfies $ \D\ v=0$ in $\R_{+}$, $v^{n}=0$ on $\{x_n=0\}$ and $\nabla \pi(\cdot,t)\in L^{1}(\R_{+})$, the first term vanishes (see, e.g.,  \cite[Proposition 2.3]{A2}). Similarly, the second term vanishes. Thus, (5.6) holds for all $f\in C^{\infty}_{c,\sigma}(\R_{+}\times (0,T))$. 

We apply Proposition 5.3 for $u=\partial_{\textrm{tan}}v$ and conclude that $\partial_{\textrm{tan}}v\equiv 0$. So $\nabla q\equiv 0$ and $v\equiv 0$. The proof is complete. 
\end{proof}

\begin{proof}[Proof of Theorem 5.1]
Since $(x_n^{2}+t)^{1/2}\leq x_n+t^{1/2}$, the condition (5.5) is satisfied under the assumptions (5.1) and (5.2). The assertion follows from Lemma 5.2.
\end{proof}

\vspace{5pt}

\section{A priori estimates for the Stokes flow }

\vspace{5pt}

We prove Theorem 1.3. We first show that $\tilde{L}^{p}$-solutions are sufficiently regular near $t=0$ and then prove (1.7) by a blow-up argument. After the proof of (1.7), we prove the estimate (1.8) by approximation.

\vspace{5pt}

\subsection{\rm{Regularity of $\tilde{L}^{p}$-solutions}}\mbox{}\\

\begin{prop}
Let $\Omega$ be a uniformly $C^{3}$-domain in $\R$, $n\geq 2$. Let $p>n$. Then, $\tilde{L}^{p}$-solutions $(v, q)$ for $v_0\in \tilde{L}^{p}_{\sigma}(\Omega)$ are bounded and H\"older continuous in $\overline{\Omega}\times [\delta,T]$ for each $\delta >0$ up to second orders. Moreover, for $\gamma \in (0,1)$, 
\begin{align*}
t^{\gamma}\big\|N(v,q)\big\|_{\infty}(t)\in C[0,T] \quad \textrm{and}\quad     
\lim_{t\downarrow 0}t^{\gamma}\big\|N(v,q)\big\|_{\infty}(t)=0,  \tag{6.1} 
\end{align*}
provided that $p>n/({2\gamma})$.
\end{prop}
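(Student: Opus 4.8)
The plan is to deduce (6.1) from two facts. First, for a uniformly $C^{3}$-domain the $\tilde{L}^{p}$-solution $(v,q)$ with $v_0\in\tilde{L}^{p}_{\sigma}(\Omega)$ is, for every $t>0$, a classical solution for which $v,\nabla v,\nabla^{2}v,\partial_{t}v,\nabla q$ are bounded and parabolically Hölder continuous on $\overline{\Omega}\times[\delta,T]$ for each $\delta>0$, and $t\mapsto N(v,q)(\cdot,t)$ is continuous into $C_{b}(\overline{\Omega})$ on $(0,T]$. Second, there is the short-time bound
\[
\big\|N(v,q)\big\|_{L^{\infty}(\Omega)}(t)\leq \frac{C}{t^{\,n/(2p)}}\big\|v_0\big\|_{\tilde{L}^{p}(\Omega)},\qquad 0<t\leq T .
\]
Granting these, $t^{\gamma}\|N(v,q)\|_{L^{\infty}(\Omega)}(t)\leq Ct^{\gamma-n/(2p)}\|v_0\|_{\tilde{L}^{p}(\Omega)}\to 0$ as $t\downarrow 0$ exactly when $\gamma>n/(2p)$, i.e. $p>n/(2\gamma)$; combined with the continuity on $(0,T]$ this is (6.1).

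For the first fact I would invoke the $\tilde{L}^{p}$-theory of \cite{FKS1}, \cite{FKS2}, \cite{FKS3}: the Stokes semigroup is analytic on $\tilde{L}^{p}_{\sigma}$, so $v(t)=S(t)v_0\in D(A^{\infty})$ for $t>0$, with $\|A^{k}v(t)\|_{\tilde{L}^{p}(\Omega)}\lesssim t^{-k}\|v_0\|_{\tilde{L}^{p}(\Omega)}$. By the $W^{2,p}$- and $W^{3,p}$-regularity of the stationary Stokes system on uniformly $C^{2}$- and $C^{3}$-domains, $D(A)\hookrightarrow W^{2,p}(\Omega)$, and each $A^{k}v(t)$ — which solves the stationary Stokes problem with right-hand side $A^{k+1}v(t)\in D(A)\subset W^{1,p}$ — lies in $W^{3,p}(\Omega)$; since $p>n$, Sobolev embedding puts $v(t)$ and all its time derivatives $\partial_{t}^{k}v(t)=\pm A^{k}v(t)$ in $C^{2}(\overline{\Omega})$, uniformly on $[\delta,T]$. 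Applying $\D$ to (1.1) shows $q(\cdot,t)$ is harmonic, and the momentum equation gives $\nabla q=\Delta v-\partial_{t}v\in C^{0}(\overline{\Omega})$ and $\partial_{t}\nabla q=\Delta\partial_{t}v-\partial_{t}^{2}v\in C^{0}(\overline{\Omega})$, again uniformly on $[\delta,T]$. Hence $v,\nabla v,\nabla^{2}v,\partial_{t}v,\nabla q$ are bounded, $C^{1}$-or-better in $x$, and Lipschitz in $t$ on $\overline{\Omega}\times[\delta,T]$, so parabolically Hölder continuous there; moreover $t\mapsto v(t)$ is continuous into $D(A^{2})\hookrightarrow C^{2}(\overline{\Omega})$ and $t\mapsto Av(t),\ \Delta v(t)$ into $C^{0}(\overline{\Omega})$, so $t\mapsto t^{\gamma}\|N(v,q)\|_{L^{\infty}(\Omega)}(t)$ is continuous on $(0,T]$.

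For the second fact I would use the $L^{p}$--$L^{\infty}$ smoothing estimates of the Stokes semigroup on $\tilde{L}^{p}$,
\[
\big\|\partial_{t}^{s}\partial_{x}^{k}S(t)v_0\big\|_{L^{\infty}(\Omega)}\leq \frac{C}{t^{\,s+|k|/2+n/(2p)}}\big\|v_0\big\|_{\tilde{L}^{p}(\Omega)},\qquad 2s+|k|\leq 2,\ 0<t\leq T,
\]
which follow from the $\tilde{L}^{p}$--$\tilde{L}^{q}$ and gradient estimates of the $\tilde{L}^{p}$-theory combined with Sobolev embedding (and a splitting $S(t)=S(t/2)S(t/2)$ for the top-order derivatives), the exponent $n/(2p)$ being the scaling-critical one. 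These control the first four terms of $N(v,q)$; for the last one, $\nabla q=\Delta v-\partial_{t}v$ gives $\|\nabla q(t)\|_{L^{\infty}(\Omega)}\leq\|\nabla^{2}v(t)\|_{L^{\infty}(\Omega)}+\|\partial_{t}v(t)\|_{L^{\infty}(\Omega)}$, which obeys the same $t^{-1-n/(2p)}$ bound. Summing with the weights appearing in $N(v,q)$ yields the displayed short-time estimate.

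The main obstacle is the pressure near $\partial\Omega$: the harmonic-pressure gradient estimate (4.1) controls only $d_{\Omega}(x)|\nabla q(x,t)|$, which degenerates as $d_{\Omega}(x)\to 0$ and so does not bound $\|\nabla q\|_{L^{\infty}(\Omega)}$ directly; the way around it is to eliminate $\nabla q$ through the equation $\nabla q=\Delta v-\partial_{t}v$ and control it by the velocity. The remaining subtlety is to obtain the scaling-sharp exponent $n/(2p)$ rather than a worse one coming from iterated Sobolev embedding — this is precisely what makes the hypothesis $p>n/(2\gamma)$, rather than merely "$p$ large", the correct one.
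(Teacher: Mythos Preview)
Your overall plan is sound, and your treatment of the regularity on $\overline{\Omega}\times[\delta,T]$ is close in spirit to the paper's (both route through stationary Stokes $W^{3,p}$-regularity and analyticity of $S(t)$ on $\tilde{L}^{p}_{\sigma}$). The genuine gap is in the second step, and you have in fact named it yourself without closing it.

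You assert the short-time bound $\|\partial_{t}^{s}\partial_{x}^{k}S(t)v_0\|_{L^{\infty}}\leq Ct^{-s-|k|/2-n/(2p)}\|v_0\|_{\tilde{L}^{p}}$, saying it ``follows from the $\tilde{L}^{p}$--$\tilde{L}^{q}$ and gradient estimates of the $\tilde{L}^{p}$-theory combined with Sobolev embedding''. But the $\tilde{L}^{p}$-theory of \cite{FKS1}, \cite{FKS2}, \cite{FKS3} does \emph{not} supply $\tilde{L}^{p}$--$\tilde{L}^{q}$ smoothing on a general uniformly $C^{3}$-domain; it gives only analyticity on $\tilde{L}^{p}_{\sigma}$, hence $\tilde{L}^{p}\to\tilde{L}^{p}$ bounds such as $\|v(t)\|_{\tilde{L}^{p}}+t^{1/2}\|\nabla v(t)\|_{\tilde{L}^{p}}\leq C\|v_0\|_{\tilde{L}^{p}}$. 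Feeding these into the global Sobolev embedding $W^{1,p}\hookrightarrow L^{\infty}$ yields only $\|v(t)\|_{L^{\infty}}\lesssim t^{-1/2}\|v_0\|_{\tilde{L}^{p}}$, with exponent $1/2$ instead of $n/(2p)$; iterating for higher derivatives makes it worse, not better. Your final paragraph flags exactly this ``remaining subtlety'' but offers no mechanism to recover the scaling-sharp exponent.

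The paper's device for this is to work in the uniformly local spaces $L^{p}_{\textrm{ul}}$ and use the scale-dependent interpolation inequality
\[
\|\varphi\|_{L^{\infty}(\Omega)}\leq \frac{C}{r^{n/p}}\Big(\|\varphi\|_{L^{p}_{\textrm{ul},r}(\Omega)}+r\|\nabla\varphi\|_{L^{p}_{\textrm{ul},r}(\Omega)}\Big),\qquad r\leq r_0,
\]
valid on uniformly regular domains (Lunardi \cite[Lemma~3.1.4]{L}). One first proves the $L^{p}_{\textrm{ul}}$ bound $\sup_{0<t\leq T}\|\tilde{N}(v,q)\|_{L^{p}_{\textrm{ul}}}(t)\leq C\|v_0\|_{\tilde{L}^{p}}$, where $\tilde{N}$ augments $N$ by $t^{3/2}(|\nabla^{3}v|+|\nabla\partial_{t}v|+|\nabla^{2}q|)$; the third-order terms are produced by local stationary Stokes regularity plus Poincar\'e, exactly the $W^{3,p}$ input you also invoke, but applied on balls $\Omega_{x_0,1}$ rather than globally. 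Then substituting $\varphi=v,\nabla v,\nabla^{2}v,\partial_{t}v,\nabla q$ and choosing $r=t^{1/2}$ makes the two terms on the right comparable (by the time weights in $N$), and the prefactor $r^{-n/p}=t^{-n/(2p)}$ delivers the sharp exponent. This localized Gagliardo--Nirenberg substitute is the missing ingredient in your argument.
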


\begin{proof}
We set 
\begin{equation*}
\tilde{N}(v,q)(x,t)=N(v,q)(x,t)+t^{\frac{3}{2}}\big|\nabla \partial_{t}v(x,t) \big|
+t^{\frac{3}{2}}\big|\nabla^{3} v(x,t) \big|+t^{\frac{3}{2}}\big|\nabla^{2} q(x,t) \big|.
\end{equation*}
We shall show that  
\begin{equation*}
\sup_{0\leq t\leq T}\big\|\tilde{N}(v,q) \big\|_{L^{p}_{\textrm{ul}}(\Omega)}(t)\leq C\big\|v_0 \big\|_{\tilde{L}^{p}(\Omega)}  \tag{6.2}
\end{equation*}
for $\tilde{L}^{p}$-solutions $(v,q)$ for $p>n$, where $L^{p}_{\textrm{ul}}(\Omega)$ denotes the uniformly local $L^{p}$ space and is equipped with the norm
\begin{equation*}
\big\|f\big\|_{{L}^{p}_{\textrm{ul},r}(\Omega)}=\sup_{x_0\in \Omega}\big\|f\big\|_{L^{p}(\Omega_{x_0,r})},\quad \Omega_{x_0,r}=B_{x_0}(r)\cap \Omega. 
\end{equation*}
We define the space $W^{1,p}_{\textrm{ul}}(\Omega)$ by a similar way. For simplicity, we suppress the subscript for $r=1$, i.e., $||f||_{L^{p}_{\textrm{ul},1}}=||f||_{L^{p}_{\textrm{ul}}}$ and $\Omega_{x_0,1}=\Omega_{x_0}$. 

We observe from (6.2) that $(v,q)$ is bounded and H\"older continuous in $\overline{\Omega}\times [\delta,T]$ for $\delta >0$. In fact, by the Sobolev embedding we estimate
\begin{equation*}
\sup_{\delta \leq t\leq T}\big\|N(v,q) \big\|_{L^{\infty}(\Omega)}(t)\leq C\big\|v_0 \big\|_{\tilde{L}^{p}(\Omega)}.  \tag{6.3}
\end{equation*}  
Thus, $(v,q)$ is bounded in $\Omega\times [\delta ,T]$. Moreover, $\partial_{t}v$, $\nabla^{2} v$, $\nabla q$ are H\"older continuous in $\overline{\Omega}$ for each $t\in (0,T]$. We observe that $\nabla^{2} v(\cdot,t)$ is H\"older continuous in $[\delta, T]$. Let $A$ denote the generator of $S(t)$ on $\tilde{L}^{p}_{\sigma}$ and $D(A)$ denote the domain of $A$ in $\tilde{L}^{p}_{\sigma}$. We may assume $v_0\in D(A)$. For $t>s\geq \delta $, it follows from (6.3) that 
\begin{align*}
\big\|\nabla^{2}v(t)-\nabla^{2}v(s)\big\|_{L^{\infty}(\Omega)}
&\leq \int_{s}^{t}\big\|\nabla^{2}S(r)Av_0\big\|_{L^{\infty}(\Omega)}\dd r   \\
&\leq C|t-s|\big\|Av_0\big\|_{\tilde{L}^{p}(\Omega)}.
\end{align*}
Thus, $\nabla^{2} v(\cdot,t)$ is H\"older continuous in $[\delta ,T]$. By a similar way, we are able to prove that $\partial_{t}v$ and $\nabla q$ are H\"older continuous in $[\delta ,T]$. We proved that $(v,q)$ is bounded and H\"older continuous in $\overline{\Omega}\times [\delta,T]$. In particular, $||N(v,q)||_{\infty}(t)\in C(0,T]$.

We prove (6.1) by applying the interpolation inequality,
\begin{equation*}
\big\|\varphi\big\|_{L^{\infty}(\Omega)}\leq \frac{C}{r^{\frac{n}{p}}}
\left(\big\|\varphi\big\|_{L^{p}_{\textrm{ul},r}(\Omega)}+r\big\|\nabla \varphi\big\|_{L^{p}_{\textrm{ul},r}(\Omega)} \right)   \tag{6.4}
\end{equation*}
for $\varphi\in W^{1,p}_{\textrm{ul}}\big(\Omega)$ and $r\leq r_0$ (see \cite[Lemma 3.1.4]{L}). We may assume $r_0\leq 1$. We substitute $\varphi=v$ and $r=t^{1/2}$ into (6.4) to estimate  
\begin{align*}
\big\|v\big\|_{L^{\infty}(\Omega)}
&\leq \frac{C}{t^{\frac{n}{2p}}}\Big(\big\|v\big\|_{L^{p}_{\textrm{ul}}(\Omega)}+t^{\frac{1}{2}}\big\|\nabla v\big\|_{L^{p}_{\textrm{ul}}(\Omega)} \Big) \\
&\leq \frac{C'}{t^{\frac{n}{2p}}}\big\|v_0\big\|_{\tilde{L}^{p}(\Omega)}
\end{align*}
by (6.2). By a similar way, we apply (6.4) for $\nabla v$, $\nabla^{2} v$, $\partial_{t} v$, $\nabla q$ and observe that 
\begin{equation*}
\sup_{0<t\leq 1}t^{\frac{n}{2p}}\big\|N(v,q)\big\|_{\infty}(t)<\infty.
\end{equation*}
Thus, $t^{\gamma}||N(v,q)||_{\infty}(t)$ is continuous in $[0,T]$ and takes zero at $t=0$ provided that $p>n/(2\gamma)$.

It remains to show (6.2). By estimates of $S(t)$ and $\p$ on $\tilde{L}^{p}$ \cite[Theorem 1.3]{FKS3}, it follows that 
\begin{equation*}
\sup\limits_{0\leq t\leq {T}}\bigl\|N(v,q)\bigr\|_{\tilde{L}^{p}(\Omega)}(t)\leq C\big\|v_0\big\|_{\tilde{L}^{p}(\Omega)}.     \tag{6.5}
\end{equation*}
Moreover, we have 
\begin{equation*}
\sup\limits_{0\leq t\leq T}t^{\frac{3}{2}}\big\|\nabla \partial_{t}v\big\|_{\tilde{L}^{p}(\Omega)}(t)\leq C\big\|v_0\big\|_{\tilde{L}^{p}(\Omega)},        \tag{6.6}
\end{equation*}
since $\partial_{t}v={A}e^{t{A}}v_0=e^{\frac{t}{2}{A}}Ae^{\frac{t}{2}{A}}v_0$. We estimate the uniformly local $L^{p}$-norms of $\nabla^{3}v$ and $\nabla^{2}q$. For $x_0\in \Omega$, we take a $C^{3}$-bounded domain $\Omega'$ such that $\Omega_{x_0}\subset \Omega'\subset \Omega_{x_0,2}$ and set the average of $q$ in $\Omega'$ by 
\begin{equation*}
\big(q\big)=\fint_{\Omega'}q \dd x.
\end{equation*}
By the Poincar\'e inequality \cite[5.8.1]{E}, we estimate 
\begin{equation*}
\big\|q-(q)\big\|_{L^{p}(\Omega')}\leq C\big\|\nabla q\big\|_{L^{p}(\Omega')}.  \tag{6.7}
\end{equation*}
The constant $C$ depends on $x_0\in \Omega$ and the boundary regularity of $\Omega$, but is uniformly bounded for $x_0\in\Omega$ since $\Omega'\subset B_{x_0}(2)$ and the boundary $\partial\Omega$ is uniformly regular. We shift $q$ to $\hat{q}=q-(q)$. By the higher-order regularity theory \cite[Chapter IV.4 and 5]{Gal} for the stationary Stokes equations (for each $t>0$), 
\begin{align*}
-\Delta v+\nabla \hat{q}
&=-\partial_{t}v\quad \textrm{in}\ \Omega, \\
\D\ v
&=0\qquad\hspace{4pt} \textrm{in}\ \Omega,\\
v
&=0\qquad\hspace{4pt} \textrm{on}\ \partial\Omega,
\end{align*}
we estimate
\begin{equation*}
\big\|\nabla^{3}v\big\|_{L^{p}(\Omega_{x_0})}+\big\|\nabla^{2} q\big\|_{L^{p}(\Omega_{x_0})}
\leq C\left(\big\|\partial_{t}v\big\|_{W^{1,p}(\Omega')}+\big\|v\big\|_{W^{1,p}(\Omega')}+\big\|\hat{q}\big\|_{L^{p}(\Omega')}   \right),   \tag{6.8}
\end{equation*}
with some constant $C$ independent of $x_0\in\Omega$. Since $x_0\in \Omega$ is an arbitrary point, by (6.5)--(6.8) we obtain  
\begin{equation*}
\sup_{0\leq t\leq T}\Big( t^{\frac{3}{2}}\big\|\nabla^{3}v\big\|_{L^{p}_{\textrm{ul}}(\Omega)}(t)+t^{\frac{3}{2}}\big\|\nabla^{2}q\big\|_{L^{p}_{\textrm{ul}}(\Omega)}(t)\Big)
\leq C\big\|v_0\big\|_{\tilde{L}^{p}(\Omega)}.     
\end{equation*}
We proved (6.2). The proof is complete.
\end{proof}

\vspace{5pt}

\subsection{\rm{A blow-up argument}}\mbox{}\\

Now, we prove the a priori estimate (1.7) by a blow-up argument. For $\alpha\in (0,1)$ we set 
\begin{equation*}
\gamma =\frac{1-\alpha}{2}.
\end{equation*}
Then, $t^{\gamma}||N(v,q)||_{\infty}(t)$ is continuous in $[0,T]$ and takes zero at $t=0$ for $\tilde{L}^{p}$-solutions $(v,q)$ for $v_0\in \tilde{L}^{p}_{\sigma}$ provided that $p> n/(2\gamma)$ by (6.1).

\vspace{5pt}

\begin{prop}
Let $\Omega$ be a strongly admissible, uniformly $C^{3}$-domain. For $\alpha\in (0,1)$ and $p>n/(1-\alpha)$, there exist some constants $T_0$ and $C$ such that (1.7) holds for all $\tilde{L}^{p}$-solutions for $v_0=\p\partial f, f\in C^{\infty}_{c}(\Omega)$.
\end{prop}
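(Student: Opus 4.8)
The plan is to prove Proposition 6.2 by a contradiction argument following the blow-up scheme sketched in the introduction. Suppose (1.7) fails for every pair of constants $T_0,C$. Then one obtains a sequence of $\tilde{L}^{p}$-solutions $(v_m,q_m)$ with $v_{0,m}=\p_\Omega\partial f_m$, $f_m\in C^\infty_c(\Omega)$, and times $t_m\in(0,1/m)$ realizing (up to a factor $1/2$) the supremum $M_m=\sup_{0\le t\le 1/m}t^\gamma\|N(v_m,q_m)\|_{L^\infty}(t)$, while $[f_m]^{(\alpha)}_\Omega/M_m\to 0$. Normalizing by $M_m$, one may assume $\sup_{0\le t\le t_m}t^\gamma\|N(\tilde v_m,\tilde q_m)\|_{L^\infty}(t)\le 1$, $t_m^\gamma\|N(\tilde v_m,\tilde q_m)\|_{L^\infty}(t_m)\ge 1/2$, and $[\tilde f_m]^{(\alpha)}_\Omega<1/m$. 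Note that by Proposition 6.1 the quantity $t^\gamma\|N\|_\infty(t)$ is continuous on $[0,1/m]$ and vanishes at $t=0$ (using $p>n/(1-\alpha)=n/(2\gamma)$), so $t_m$ is well-defined and $t_m>0$. Choosing $x_m\in\Omega$ with $t_m^\gamma N(\tilde v_m,\tilde q_m)(x_m,t_m)\ge 1/4$, one rescales to $u_m(x,t)=t_m^\gamma\tilde v_m(x_m+t_m^{1/2}x,t_mt)$, $p_m(x,t)=t_m^{\gamma+1/2}\tilde q_m(x_m+t_m^{1/2}x,t_mt)$, $g_m(x)=t_m^{-\alpha/2}\tilde f_m(x_m+t_m^{1/2}x)$, so that $(u_m,p_m)$ solves (1.1)--(1.4) on $\Omega_m\times(0,1]$ with $u_{0,m}=\p_{\Omega_m}\partial g_m$, and $\Omega_m=(\Omega-\{x_m\})/t_m^{1/2}$ expands to either $\R$ or a half space (after rotation) because $\Omega$ is uniformly $C^3$ and $t_m^{1/2}\to 0$.

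The next step is compactness. On $\Omega_m$ the rescaled solution satisfies $\sup_{0\le t\le 1}t^\gamma\|N(u_m,p_m)\|_{L^\infty(\Omega_m)}(t)\le 1$ and $N(u_m,p_m)(0,1)\ge 1/4$. Using the local H\"older estimates of Lemma 4.3 — valid since uniformly $C^3$-domains are admissible for $\alpha=0$ and strongly admissible by Section 2, so the harmonic-pressure estimate (4.1) holds with a dilation-invariant constant and is inherited by $\Omega_m$ — one gets equi-H\"older bounds for $u_m,\nabla u_m,\nabla^2u_m,\partial_tu_m,\nabla p_m$ on compact subsets of $\overline{\R_{+,c}}\times(0,1]$ (away from $t=0$), where $\R_{+,c}$ is the limiting half space or all of $\R$. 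By Ascoli--Arzel\`a and a diagonal argument a subsequence converges, locally uniformly in space-time away from $t=0$, to a limit $(u,p)$ which solves (1.1)--(1.3) classically on the limit domain, satisfies $\sup_{0<t\le1}t^\gamma\|N(u,p)\|_\infty(t)\le1$, $N(u,p)(0,1)\ge1/4$, and — crucially — the harmonic-pressure gradient bound $\sup\{t^{\gamma+1/2}x_n|\nabla p(x,t)|\}<\infty$ from the rescaled (4.1), which is exactly the decay hypothesis (5.2) needed for the half-space uniqueness theorem.

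The key remaining step, and the one I expect to be the main obstacle, is to show the blow-up limit has the right initial condition, namely that $(u,p)$ satisfies (5.3) with $\varphi\in C^\infty_c(\R_{+,c}\times[0,T))$, i.e. the weak initial trace is zero. Here one must control $u_{0,m}=\p_{\Omega_m}\partial g_m=\partial g_m-\nabla\Phi_{0,m}$ where $\nabla\Phi_{0,m}=\q_{\Omega_m}\partial g_m$. The term $\partial g_m$ tends to zero weakly because $[g_m]^{(\alpha)}_{\Omega_m}=t_m^{(\alpha-\alpha)/2}[\tilde f_m]^{(\alpha)}_\Omega\cdot(\text{scaling})\to 0$ (the H\"older seminorm is scale-invariant up to the normalization, and $[\tilde f_m]^{(\alpha)}_\Omega<1/m$); the genuinely delicate term is $\nabla\Phi_{0,m}$, handled by the scale-invariant H\"older estimate (1.12) of Lemma 3.3: decomposing $\nabla\Phi_{0,m}=\nabla\Phi_{1,m}+\nabla\Phi_{2,m}$ with $\nabla\Phi_{1,m}=\q_{\R}\partial g_m$ and $\Phi_{2,m}$ solving the Neumann problem, one gets $[\Phi_{1,m}]^{(\alpha)}_{\R}+\sup_x d^{1-\alpha}_{\Omega_m}(x)|\nabla\Phi_{2,m}(x)|\le C[g_m]^{(\alpha)}_{\Omega_m}\to 0$, so $\nabla\Phi_{0,m}\to 0$ in a sense strong enough to pass to the limit in the weak formulation against test functions vanishing near $\partial\R_{+,c}$. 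Combining this with the equi-continuity to justify the limit passage, one concludes $(u,p)$ solves (1.1)--(1.3) with zero initial trace; then Theorem 5.1 (half-space case, whose hypotheses (5.1)--(5.2) are verified above) or Lemma B.2 plus the Liouville argument of Proposition 5.3 (whole-space case, where $\nabla p\equiv 0$ and $u$ solves the heat equation) forces $u\equiv 0$, $\nabla p\equiv 0$, contradicting $N(u,p)(0,1)\ge1/4$. This establishes (1.7) on $[0,T_0]$ for some $T_0$, with $T_0=\infty$ when $\Omega$ is bounded since then the argument can be run without restricting to small $t$.
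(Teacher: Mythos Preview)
Your proof is correct and follows the paper's blow-up argument essentially verbatim: the same normalization, rescaling, case split according to $c_m=d_\Omega(x_m)/t_m^{1/2}$, compactness via Lemma~4.3, initial-condition control via Lemma~3.3, and uniqueness via Theorem~5.1 (half space) or heat-equation uniqueness (whole space). Two small corrections: for the whole-space case the paper applies Proposition~B.1 directly (not Lemma~B.2, which is the half-space version, nor Proposition~5.3); and your final remark that $T_0=\infty$ for bounded domains ``since the argument can be run without restricting to small $t$'' is neither part of Proposition~6.2 nor how the paper obtains it---the blow-up argument yields only some finite $T_0$, and the extension to all $t>0$ is carried out separately (in the proof of Theorem~1.1) via the exponential decay of $S(t)$ on $C_{0,\sigma}$.
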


\begin{proof}
We argue by contradiction. Suppose on the contrary that (1.7) were false for any choice of constants $C$ and $T_0$. Then, there would exist a sequence of $\tilde{L}^{p}$-solutions $(v_m,q_m)$ for $v_{0,m}=\p \partial f_{m}$, $f_m\in C^{\infty}_{c}(\Omega)$ such that 
\begin{equation*}
\sup_{0\leq t\leq 1/m}t^{\gamma}\big\|N(v_m,q_m)\big\|_{\infty}(t)>m\Big[f_m\Big]^{(\alpha)}_{\Omega}.
\end{equation*}   
We take a point $t_m\in (0,1/m)$ such that 
\begin{equation*}
t_m^{\gamma}\big\|N(v_m,q_m)\big\|_{\infty}(t_m)\geq \frac{1}{2}M_{m},\quad M_{m}=\sup_{0\leq t\leq 1/m}t^{\gamma}\big\|N(v_m,q_m)\big\|_{\infty}(t),
\end{equation*}   
and divide $(v_m,q_m)$ by $M_{m}$ to get $\tilde{v}_{m}=v_m/M_{m}, \tilde{q}_{m}=q_m/M_{m}$ and $\tilde{f}_{m}=f_m/M_m$ satisfying 
\begin{align*}
\sup_{0\leq t\leq t_m}t^{\gamma}\big\|N(\tilde{v}_m, \tilde{q}_m)\big\|_{\infty}(t)
&\leq 1,\\
\Big[\tilde{f}_m\Big]^{(\alpha)}_{\Omega}
&< \frac{1}{m},\\
t_m^{\gamma}\big\|N(\tilde{v}_m,\tilde{q}_m)\big\|_{\infty}(t_m)
&\geq \frac{1}{2}.
\end{align*}
We take a point $x_m\in \Omega$ such that 
\begin{equation*}
t_m^{\gamma}N(\tilde{v}_m,\tilde{q}_m)(x_m,t_m)\geq \frac{1}{4},
\end{equation*}
and rescale $(\tilde{v}_m,\tilde{q}_m)$ around $(x_m,t_m)$ to get a blow-up sequence 
\begin{equation*}
u_m(x,t)=t^{\gamma}_{m}\tilde{v}_m(x_m + t_m^{\frac{1}{2}}x,t_m t),
\quad p_m(x,t)=t_m^{\gamma+\frac{1}{2}}\tilde{q}_m(x_m + t_m^{\frac{1}{2}}x,t_m t),
\end{equation*}
and 
\begin{equation*}
g_{m}(x)=t_{m}^{-\frac{\alpha}{2}}\tilde{f}_{m}(x_m+t_{m}^{\frac{1}{2}}x).
\end{equation*}
The blow-up sequence $(u_m,p_m)$ satisfies (1.1)--(1.4) in $\Omega_{m}\times (0,1]$ for $u_{0,m}=\p_{\Omega_{m}}\partial g_{m}$ and  
\begin{equation*}
\Omega_{m}=\frac{\Omega-\{x_m\}}{t^{\frac{1}{2}}_{m}}.
\end{equation*}
The estimates for $(\tilde{v}_m,\tilde{q}_m)$ are inherited to   
\begin{align*}
\sup_{0\leq t\leq 1}t^{\gamma}\big\|N(u_m, p_m)\big\|_{L^{\infty}(\Omega_{m})}(t)
&\leq 1,             \tag{6.9}  \\
\Big[g_m\Big]^{(\alpha)}_{\Omega_m}
&< \frac{1}{m},          \tag{6.10}   \\
N(u_m,p_m)(0,1)
&\geq \frac{1}{4}.       \tag{6.11}
\end{align*}
We set $c_m={d_{m}}/{t_{m}^{\frac{1}{2}}}$ for $d_{m}=d_{\Omega}(x_m)$. Then, the proof is divided into two cases depending on whether $\{c_m\}$ converges or not.\\

\noindent 
\textit{Case} 1 $\overline{\lim}_{m\to\infty}c_m=\infty$. We may assume ${\lim}_{m\to\infty}c_m=\infty$. In this case, the rescaled domain $\Omega_{m}$ expands to the whole space. In fact, for each $R>0$ we observe that 
\begin{equation*}
\inf \big\{d_{\Omega_{m}}(x) \bigm| |x|\leq R \big\}\to \infty
\quad \textrm{as}\ c_m\to\infty.
\end{equation*}
We take an arbitrary $\varphi\in C_{c}^{\infty}(\R\times [0,1))$. We may assume that $\varphi$ is supported in $\Omega_{m}\times [0,T)$. Since $(u_m,p_m)$ satisfies (1.1) in $\Omega_{m}\times (0,1]$ for $u_{0,m}=\partial g_m-\nabla \Phi_{0,m}$ and $\nabla \Phi_{0,m}=\q_{\Omega_{m}}\partial g_m$, it follows that   
\begin{equation*}
\int_{0}^{1}\int_{\R}\big(u_{m}\cdot (\partial_{t}\varphi+\Delta \varphi)-\nabla p_{m}\cdot \varphi\big)\dd x\dd t
=\int_{\Omega_m}(g_m\cdot \partial\varphi_{0}-\Phi_{0,m}\D\ \varphi_{0})\dd x,    \tag{6.12}
\end{equation*}
where $\varphi_{0}(x)=\varphi(x,0)$.

We apply Lemma 4.3 (i) and observe that $\partial_{t}u_m$, $\nabla^{2}u_m$, $\nabla p_m$ are equi-continuous in the interior of $\Omega_{m}$. There exists a subsequence of $\{(u_m,p_m)\}$ (still denoted by $\{(u_m,p_m)\}$) such that $(u_m,p_m)$ converges to a limit $(u,p)$ locally uniformly in $\R\times (0,1]$ together with $\nabla u_m$, $\nabla^{2}u_m$, $\partial_t u_m$, $\nabla p_m$. Moreover, it follows from (4.1) and (6.9) that  
\begin{equation*}
\sup\left\{t^{\gamma+\frac{1}{2}}d_{\Omega_{m}}(x)\big|\nabla p_{m}(x,t)\big|\ 
\Big|\ x \in \Omega_{m},\ 0<t\leq 1    \right\}\leq C,   \tag{6.13}
\end{equation*} 
with some constant $C$ independent of $m$. Since $\Omega_m$ expands to the whole space, $\nabla p_{m}$ converges to zero locally uniformly in $\R\times (0,1]$, i.e., $\nabla p\equiv 0$. 

We apply Lemma 3.3 for $\nabla \Phi_{1,m}=\q_{\R}\partial g_m$ and $\nabla \Phi_{2,m}=\q_{\Omega_m}\partial g_m-\q_{\R}\partial g_m$ to estimate    
\begin{equation*}
\Big[\Phi_{1,m}\Big]^{(\alpha)}_{\R}
+\sup_{x\in \Omega_{m}}d_{\Omega_{m}}^{1-\alpha}(x)\big|\nabla\Phi_{2,m}(x)\big|
\leq C\Big[g_m\Big]^{(\alpha)}_{\Omega_{m}},   \tag{6.14}
\end{equation*} 
with some constant $C$ independent of $m$. By (6.10) and (6.14), the right-hand side of (6.12) vanishes as $m\to\infty$. Thus, the limit $u$ satisfies 
\begin{equation*}
\int_{0}^{1}\int_{\R}u\cdot (\partial_{t}\varphi+\Delta \varphi)\dd x\dd t=0.
\end{equation*}
By the uniqueness of the heat equation (Proposition B.1), we conclude that $u\equiv 0$ (and $\nabla p\equiv 0$). This contradicts $N(u,p)(0,1)\geq 1/4$ by (6.11) so Case 1 does not occur.\\

\noindent 
\textit{Case} 2 $\overline{\lim}_{m\to\infty}c_m<\infty$. By choosing a subsequence, we may assume $\lim_{m\to\infty}c_m=c_0$ for some $c_0\geq 0$. In this case, the rescaled domain $\Omega_m$ expands to a half space. Since $d_{\Omega}(x_m)=c_m t_m^{1/2}\to 0$, the points $\{x_m\}$ accumulate to the boundary. By translation and rotation around $\tilde{x}_{m}\in \partial\Omega$, the projection of $x_m$ to $\partial\Omega$, we may assume that $x_m=(0,d_m)$ and $\tilde{x}_{m}=0$. We consider the neighborhood of the origin denoted by   
\begin{equation*}
\Omega_{\textrm{loc}}=\left\{(x',x_n)\in \R\ \big|\ h(x')<x_n<h(x')+\beta',\ |x'|<\alpha'\right\},
\end{equation*} 
with some constants $\alpha',\beta', K'$ and a $C^{3}$-function $h$ satisfying $h(0)=0$, $\nabla'h(0)=0$ and $||h||_{C^{3}(\{|x'|<\alpha'\})}\leq K'$. Since $\Omega_{\textrm{loc}}\subset \Omega$ is rescaled to 
\begin{equation*}
\Omega_{\textrm{loc},m}=\left\{(x',x_n)\in \R\ \Bigg|\ h_{m}(x')-c_m<x_n<h_{m}(x')-c_m+\frac{\beta'}{t_m^{\frac{1}{2}}},\ |x'|<\frac{\alpha'}{t_{m}^{\frac{1}{2}}}\right\},
\end{equation*} 
where $h_m(x')=t_{m}^{-1/2}h(t_m^{1/2}x')$, $\Omega_{\textrm{loc},m}$ expands to the half space $\R_{+,-c_0}=\{(x',x_n)\ |\ x_n>-c_0 \}$.

We take an arbitrary $\varphi\in C^{\infty}_{c}(\R_{+,-c_0}\times [0,1))$ and observe that $\varphi$ is supported in $\Omega_{\textrm{loc},m}\times [0,1)$ for sufficiently large $m$. Since $(u_m,p_m)$ satisfies (1.1), it follows that 
\begin{equation*}
\int_{0}^{1}\int_{\Omega_{\textrm{loc},m}}\big(u_{m}\cdot (\partial_{t}\varphi+\Delta \varphi)-\nabla p_{m}\cdot \varphi\big)\dd x\dd t
=\int_{\Omega_m}(g_m\cdot \partial\varphi_{0}-\Phi_{0,m}\D\ \varphi_{0})\dd x.  \tag{6.15}
\end{equation*}
We apply Lemma 4.3 (ii) and observe that $\partial_{t}u_m$, $\nabla^{2}u_m$, $\nabla p_m$ are equi-continuous up to the boundary of $\Omega_m$. There exists a subsequence denoted by $\{(u_m,p_m)\}$ such that $(u_m,p_m)$ converges to a limit $(u,p)$ locally uniformly in $\overline{\mathbb{R}^{n}_{+,-c_0}}\times (0,1]$ together with $\nabla u_m$, $\nabla^{2}u_m$, $\partial_t u_m$, $\nabla p_m$. By (6.13), the limit $p$ satisfies  
\begin{equation*}
\sup\left\{t^{\gamma+\frac{1}{2}}(x_n+c_0)\big|\nabla p(x,t)\big|\ 
\Big|\ x \in \R_{+,-c_0},\ 0<t\leq 1    \right\}\leq C.   
\end{equation*} 
By (6.10), (6.14) and sending $m\to\infty$, the right-hand side of (6.15) vanishes as in Case 1. Thus, the limit $(u,p)$ satisfies 
\begin{equation*}
\int_{0}^{1}\int_{\R_{+,-c_0}}\big(u\cdot (\partial_{t}\varphi+\Delta \varphi)-\nabla p\cdot \varphi\big)\dd x\dd t
=0.    
\end{equation*}
We apply Theorem 5.1 and conclude that $u\equiv 0$ and $\nabla p\equiv 0$. This contradicts $N(u,p)(0,1)\geq 1/4$ by (6.11) so Case 2 does not occur.

We reached a contradiction. The proof is now complete. 
\end{proof}

\vspace{5pt}

\subsection{\rm{Approximation}}\mbox{}\\

We prove the estimate (1.8) by interpolation and approximation. After the proof of Theorem 1.3, we give a proof for Theorems 1.2 and 1.1.

\vspace{5pt}

\begin{prop}
Let $\Omega$ be a domain in $\R$. Then, the estimate  
\begin{equation*}
\Big[f\Big]^{(\alpha)}_{\Omega}\leq 2\big\|f\big\|_{L^{\infty}(\Omega)}^{1-\alpha}\big\|\nabla f\big\|_{L^{\infty}(\Omega)}^{\alpha}     \tag{6.16}
\end{equation*}
holds for $f\in C^{\infty}_{c}(\Omega)$ and $\alpha\in (0,1)$. 
\end{prop}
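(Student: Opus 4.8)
The plan is to prove the inequality pointwise and then take the supremum defining $[f]^{(\alpha)}_{\Omega}$. First I would use that $f\in C^{\infty}_{c}(\Omega)$ to pass to the whole space: extending $f$ by zero to $\R\backslash\overline{\Omega}$ produces $f\in C^{\infty}_{c}(\R)$ with $\|f\|_{L^{\infty}(\R)}=\|f\|_{L^{\infty}(\Omega)}$ and $\|\nabla f\|_{L^{\infty}(\R)}=\|\nabla f\|_{L^{\infty}(\Omega)}$, and since $\Omega\subset\R$ we have $[f]^{(\alpha)}_{\Omega}\le [f]^{(\alpha)}_{\R}$. Hence it suffices to bound the H\"older semi-norm over $\R$, where the segment joining any two points stays inside the domain.

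Next, for $x,y\in\R$ with $x\ne y$ I would record the two elementary bounds $|f(x)-f(y)|\le 2\|f\|_{L^{\infty}(\Omega)}$ and, applying the mean value theorem along the segment $[x,y]$, $|f(x)-f(y)|\le \|\nabla f\|_{L^{\infty}(\Omega)}\,|x-y|$. Interpolating between these two estimates gives
\[
\frac{|f(x)-f(y)|}{|x-y|^{\alpha}}
=\left(\frac{|f(x)-f(y)|}{|x-y|}\right)^{\alpha}|f(x)-f(y)|^{1-\alpha}
\le \|\nabla f\|_{L^{\infty}(\Omega)}^{\alpha}\bigl(2\|f\|_{L^{\infty}(\Omega)}\bigr)^{1-\alpha},
\]
and since $2^{1-\alpha}\le 2$ for $\alpha\in(0,1)$, the right-hand side is at most $2\|f\|_{L^{\infty}(\Omega)}^{1-\alpha}\|\nabla f\|_{L^{\infty}(\Omega)}^{\alpha}$. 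Taking the supremum over all $x\ne y$ in $\R$ then yields (6.16).

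I do not expect any genuine obstacle. The only point requiring a moment's care is that for a general domain $\Omega$ the segment $[x,y]$ need not lie in $\Omega$, so the mean value inequality cannot be invoked directly on $\Omega$; this is exactly what the zero extension circumvents, and it is available precisely because $f$ has compact support in $\Omega$.
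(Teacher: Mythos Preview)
Your proposal is correct and matches the paper's own proof essentially line for line: the paper also zero-extends $f$ to $\R$, writes $\frac{|f(x)-f(y)|}{|x-y|^{\alpha}} = |f(x)-f(y)|^{1-\alpha}\bigl(\frac{|f(x)-f(y)|}{|x-y|}\bigr)^{\alpha}$, and bounds the two factors by $2\|f\|_{\infty}$ and $\|\nabla f\|_{\infty}$ respectively. Your explicit remark that the zero extension is what lets the mean value inequality work (since segments need not stay in $\Omega$) is a nice clarification the paper leaves implicit.
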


\begin{proof}
We identify $f\in C^{\infty}_{c}(\Omega)$ and its zero extension to $\R\backslash \overline{\Omega}$. For arbitrary $x,y\in \R$, $x\neq y$, we estimate 
\begin{align*}
\frac{\big|{f}(x)-{f}(y)\big|}{|x-y|^{\alpha}}
&=\big|{f}(x)-{f}(y)\big|^{1-\alpha}\Bigg(\frac{\big|{f}(x)-{f}(y)\big|}{|x-y|}\Bigg)^{\alpha}  \\
&\leq 2\big\|{f}\big\|_{L^{\infty}(\R)}^{1-\alpha}\big\|\nabla {f}\big\|_{L^{\infty}(\R)}^{\alpha}.
\end{align*}
Since $f$ is supported in $\Omega$, (6.16) follows.
\end{proof}

\vspace{5pt}

\begin{prop}
Let $\Omega$ be a domain with Lipschitz boundary. \\
(i) When $\Omega$ is bounded, 
\begin{equation*}
C^{1}_{0}(\Omega)=\Big\{f\in C^{1}(\overline{\Omega})\ \big|\ f=0,\ \nabla f=0\ \textrm{on}\ \partial\Omega \Big\}.
\end{equation*}
(ii) When $\Omega$ is unbounded, 
\begin{equation*}
C^{1}_{0}(\Omega)=\Big\{f\in C^{1}(\overline{\Omega})\ \big|\ f\ \textrm{and}\ \nabla f\ \textrm{are vanishing on}\  \partial\Omega\ \textrm{and as}\ |x|\to\infty \Big\}.
\end{equation*}
Moreover, $C^{\infty}_{c}(\Omega)$ is dense in $C^{1}_{0}\cap W^{1,2}(\Omega)$.
\end{prop}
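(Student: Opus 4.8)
The plan is to prove the two set inclusions in (i)/(ii) and then deduce the density statement, working throughout with the distance $d_\Omega$ and the local Lipschitz graph functions $h$ describing $\partial\Omega$; after a preliminary cut-off all functions will have compact support, so only finitely many boundary charts are ever needed and $L$ can denote the maximum of the corresponding Lipschitz constants. The inclusion ``$\subseteq$'' is routine: if $f=\lim f_k$ in $W^{1,\infty}(\Omega)$ with $f_k\in C^\infty_c(\Omega)$, then uniform convergence of $f_k$ and of $\nabla f_k$ gives $f\in C^1(\overline\Omega)$; since each $f_k$ vanishes near $\partial\Omega$, letting $k\to\infty$ yields $f=0$ and $\nabla f=0$ on $\partial\Omega$; and since $\textrm{supp}\,f_k$ is bounded, for every $\varepsilon>0$ one has $|f|+|\nabla f|\le 2\|f-f_k\|_{W^{1,\infty}}<\varepsilon$ outside a ball, which is the vanishing at infinity in (ii).

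The substantial direction is ``$\supseteq$''. Given $f$ in the right-hand side, I would proceed in four steps. (a) In the unbounded case, reduce to $\textrm{supp}\,f$ compact: with $\theta_R(x)=\theta(x/R)$ as in Section~2 and $|f\nabla\theta_R|\le (C/R)\|f\|_{L^\infty(\Omega\setminus B_R)}$, the vanishing of $f,\nabla f$ at infinity gives $\theta_R f\to f$ in $W^{1,\infty}$ (this step is empty when $\Omega$ is bounded). (b) Establish the boundary decay
\[
|\nabla f(x)|\le\omega(d_\Omega(x)),\qquad |f(x)|\le(1+L)\,d_\Omega(x)\,\omega\big((1+L)d_\Omega(x)\big),
\]
where $\omega(s)=\sup\{|\nabla f(z)|:z\in\overline\Omega,\ d_\Omega(z)\le s\}\to0$ as $s\to0$ by a compactness argument using that $\textrm{supp}\,f$ is compact and $\nabla f=0$ on $\partial\Omega$; the bound on $f$ comes from integrating $\nabla f$ along the vertical segment, lying in $\overline\Omega$, that joins $x$ (close to $\partial\Omega$) to $(x',h(x'))\in\partial\Omega$ inside a chart, its length being $x_n-h(x')\le(1+L)d_\Omega(x)$. (c) Multiply by a smooth cut-off $\psi_\delta$ with $\psi_\delta\equiv0$ on $\{d_\Omega\le\delta\}$, $\psi_\delta\equiv1$ on $\{d_\Omega\ge3\delta\}$, $0\le\psi_\delta\le1$ and $|\nabla\psi_\delta|\le C/\delta$ (built by composing a one-dimensional profile with a mollification of $d_\Omega$); then $\|f-\psi_\delta f\|_{W^{1,\infty}}\le C\big(\|\nabla f\|_{L^\infty(\{d_\Omega<3\delta\})}+\delta^{-1}\|f\|_{L^\infty(\{d_\Omega<3\delta\})}\big)\le C\omega(C\delta)\to0$ by (b), while $\psi_\delta f\in C^1(\overline\Omega)$ is supported in the compact set $\textrm{supp}\,f\cap\{d_\Omega\ge\delta\}\subset\Omega$. (d) Mollify $\psi_\delta f$ (extended by $0$) at a scale below $\textrm{dist}(\textrm{supp}(\psi_\delta f),\partial\Omega)$ to obtain functions in $C^\infty_c(\Omega)$ converging to $\psi_\delta f$ in $W^{1,\infty}$, and diagonalise over $\delta$. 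This gives $f\in C^1_0(\Omega)$ and completes (i) and (ii).

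For the ``Moreover'' part, when $\Omega$ is bounded one has $W^{1,\infty}(\Omega)\hookrightarrow W^{1,2}(\Omega)$, so $C^1_0\cap W^{1,2}=C^1_0$ and the construction above already converges in $W^{1,2}$. When $\Omega$ is unbounded I would run the same four steps on $f\in C^1_0\cap W^{1,2}$, noting additionally that $\|(1-\theta_R)f\|_{W^{1,2}}\to0$ since $f\in W^{1,2}$, and that after step (a) every subsequent error term is supported in the fixed ball $B_{2R}$, where $\|\cdot\|_{L^2(B_{2R})}\le|B_{2R}|^{1/2}\|\cdot\|_{L^\infty}$; hence the $W^{1,\infty}$-smallness of the errors already proved forces $W^{1,2}$-smallness, and a diagonal choice gives a sequence in $C^\infty_c(\Omega)$ converging to $f$ in $W^{1,\infty}\cap W^{1,2}$.

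The main obstacle is step (b). Because $|\nabla\psi_\delta|$ is of order $1/\delta$ on the shell $\{\delta<d_\Omega<3\delta\}$, the term $f\,\nabla\psi_\delta$ in $\nabla(\psi_\delta f)$ is controllable only thanks to the sharp decay $|f(x)|=o(d_\Omega(x))$ near $\partial\Omega$, and this estimate genuinely uses \emph{both} boundary conditions $f|_{\partial\Omega}=0$ and $\nabla f|_{\partial\Omega}=0$ together with the Lipschitz geometry of $\Omega$ (the comparison $x_n-h(x')\le(1+L)d_\Omega(x)$ along an interior vertical segment); everything else — the cut-offs $\theta_R$ and $\psi_\delta$, the mollification, the diagonal argument, and the bookkeeping of the $L^2$ norms — is standard.
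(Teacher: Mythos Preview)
Your argument is correct and complete, but it follows a genuinely different route from the paper's. The paper handles the bounded case by first treating \emph{star-shaped} domains via the dilation $f_\lambda(x)=f(x/\lambda)$, $\lambda<1$: since $f$ and $\nabla f$ vanish on $\partial\Omega$, the extension of $f_\lambda$ by zero outside $\lambda\Omega$ is $C^1$ in $\Omega$, and $f_\lambda\to f$ in $W^{1,\infty}$ as $\lambda\uparrow1$; a general bounded Lipschitz domain is then decomposed into star-shaped pieces by a standard partition-of-unity lemma (Galdi, Lemma~II.1.3). The unbounded case is reduced to the bounded one by the same spatial cut-off $\theta_m$ you use. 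Your approach instead works directly with the distance function: you exploit the quantitative decay $|f(x)|=o(d_\Omega(x))$ near $\partial\Omega$---which indeed requires both boundary conditions and the Lipschitz comparison $x_n-h(x')\le(1+L)d_\Omega(x)$---and this is precisely what makes the boundary cut-off $\psi_\delta$ work in $W^{1,\infty}$. Your method is more self-contained (no appeal to the star-shaped decomposition lemma) and makes the role of the hypothesis $\nabla f|_{\partial\Omega}=0$ completely explicit; the paper's dilation trick is slicker when the geometry cooperates but hides this mechanism inside the $C^1$-matching at $\partial(\lambda\Omega)$. Both yield the density in $C^1_0\cap W^{1,2}$ by the same $L^2$-bookkeeping you describe.
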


\begin{proof}
We begin with the case when $\Omega$ is star-shaped, i.e., $\lambda \Omega\subset \overline{\Omega}$ for $\lambda<1$. We take $f\in C^{1}(\overline{\Omega})$ satisfying $f=0$ and $\nabla f=0$ on $\partial\Omega$ and set 
\begin{align*}
f_{\lambda}(x)=
\begin{cases}
&f(x/\lambda)\quad \textrm{for}\ x\in \lambda\Omega, \\
&0\qquad\quad\ \textrm{for}\ x\in \Omega\backslash \overline{\lambda\Omega}.
\end{cases}
\end{align*}
Since $f$ and $\nabla f$ are vanishing on $\partial\Omega$, $f_{\lambda}$ is continuously differentiable in $\Omega$. Since $f_{\lambda}$ converges to $f$ in $W^{1,\infty}(\Omega)$ as $\lambda\to 1$, by mollification of $f_{\lambda}$, we obtain the sequence $\{f_m\}\subset C_{c}^{\infty}(\Omega)$ satisfying $f_m\to f$ in $W^{1,\infty}(\Omega)$. When $\Omega$ is a general bounded Lipschitz domain, we decompose $\Omega$ into star-shaped domains (see \cite[Lemma II.1.3]{Gal}) and reduce the problem to the case of star-shaped. When $\Omega$ is bounded,  $C^{\infty}_{c}$ is dense in $C^{1}_{0}\cap W^{1,2}=C_{0}^{1}$. We proved (i). 

We prove (ii). For $f\in C^{1}(\overline{\Omega})$ satisfying 
\begin{align*}
&f=0,\quad \nabla f=0\quad\textrm{on}\ \partial\Omega,\\
&\lim_{|x|\to\infty}f(x)=0,\quad \lim_{|x|\to\infty}\nabla f(x)=0,
\end{align*}
we prove that there exists a sequence $\{f_m\}\subset C^{\infty}_{c}$ such that 
\begin{equation*}
\lim_{m\to\infty}\big\|f-f_m\big\|_{W^{1,\infty}(\Omega)}=0.  \tag{6.17}
\end{equation*}
Here, we write $\lim_{|x|\to\infty}f(x)=0$ in the sense that $f(x_m)\to0$ as $m\to\infty$ for any sequence $\{x_m\}\subset \Omega$ such that $|x_m|\to\infty$. This condition is equivalent to 
\begin{equation*}
\lim_{R\to\infty}\sup\Big\{f(x)\ \big|\ x\in \Omega, \ |x|\geq R\Big\}=0.
\end{equation*}
Let $\theta\in C^{\infty}_{c}[0,\infty)$ be a cutoff function such that $\theta\equiv 1$ in $[0,1/2]$, $\theta\equiv 0$ in $[1,\infty)$ and $0\leq \theta\leq 1$. We set $\theta_{m}(x)=\theta(|x|/m)$ so that $\theta_{m}\in C^{\infty}_{c}(\R)$ satisfies $\theta_{m}\equiv 1$ for $|x|\leq m/2$ and $\theta_{m}\equiv 0$ for $|x|\geq m$. We observe that $\tilde{f}_{m}=f\theta_{m}$ satisfies $\tilde{f}_m\in C^{1}(\overline{\Omega})$ and spt $\tilde{f}_m\subset \overline{\Omega}\cap \{|x|\leq m \}$. Since $f$ and $\nabla f$ are vanishing on $\partial\Omega$, $\tilde{f}_m$ satisfies $\tilde{f}_m=0$ and $\nabla \tilde{f}_m=0$ on $\partial\Omega$. Moreover, $\tilde{f}_{m}$ converges to $f$ uniformly in $\overline{\Omega}$ since $f$ is decaying as $|x|\to\infty$, i.e., 
\begin{align*}
\big\|f-\tilde{f}_{m}\big\|_{L^{\infty}(\Omega)}
&=\big\|f(1-\theta_{m})\big\|_{L^{\infty}(\Omega)}  \\
&\leq \sup\big\{f(x)\ |\ x\in \Omega,\ |x|\geq m/2\   \big\}\\
&\to 0\quad \textrm{as}\ m\to\infty.
\end{align*}
By a similar way, $\nabla \tilde{f}_{m}$ converges to $\nabla f$ uniformly in $\overline{\Omega}$. Thus, we have 
\begin{equation*}
\lim_{m\to\infty}\big\|f-\tilde{f}_m\big\|_{W^{1,\infty}(\Omega)}=0.  
\end{equation*}
We set $\Omega_{m}=\Omega\cap B_{0}(m)$. We may assume that $\Omega_{m}$ has Lipschitz boundary by taking a bounded Lipschitz domain $\Omega'_{m}\supset \Omega_{m}$ if necessary. Since $\tilde{f}_{m}\in C^{1}_{0}(\Omega_{m})$ by the assertion (i), for each $m\geq 1$ there exists $\{\tilde{f}_{m,k}\}\subset C^{\infty}_{c}(\Omega_{m})$ such that 
\begin{equation*}
\lim_{k\to\infty}\big\|\tilde{f}_{m}-\tilde{f}_{m,k}\big\|_{W^{1,\infty}(\Omega_{m})}=0,
\end{equation*}
i.e., for an arbitrary $\varepsilon>0$ there exists $K=K_{m,\varepsilon}$ such that 
\begin{equation*}
\big\|\tilde{f}_{m}-\tilde{f}_{m,k}\big\|_{W^{1,\infty}(\Omega_{m})}
\leq \varepsilon\quad \textrm{for}\ k\geq K_{m,\varepsilon}.
\end{equation*}
We set $f_{m}=\tilde{f}_{m,k}$ for $k=K_{m,\varepsilon}$. Then, $f_m\in C^{\infty}_{c}(\Omega)$ satisfies 
\begin{align*}
\big\|f- f_{m}\big\|_{W^{1,\infty}(\Omega)}
&\leq \big\|f- \tilde{f}_{m}\big\|_{W^{1,\infty}(\Omega)}
+\big\|\tilde{f}_{m}-f_m  \big\|_{W^{1,\infty}(\Omega_{m})} \\
&\leq \big\|f- \tilde{f}_{m}\big\|_{W^{1,\infty}(\Omega)}+\varepsilon.
\end{align*}
It follows that 
\begin{equation*}
\overline{\lim_{m\to\infty}}\big\|f-f_m\big\|_{W^{1,\infty}(\Omega)}\leq \varepsilon.
\end{equation*}
Since $\varepsilon$ is an arbitrary constant, letting $\varepsilon\downarrow 0$ yields (6.17). If in addition $f\in W^{1,2}$, $f_m\to f$ in $W^{1,2}$ and $C^{\infty}_{c}$ is dense in $C^{1}_{0}\cap W^{1,2}$. The proof is complete.
\end{proof}

\begin{proof}[Proof of Theorem 1.3]
It follows from (1.7) and (6.16) that     
\begin{equation*}
\sup_{0\leq t\leq T_0}t^{\gamma}\big\|N(v,q)\big\|_{\infty}(t)\leq C\big\|f\big\|_{\infty}^{1-\alpha}\big\|\nabla f\big\|_{\infty}^{\alpha}    \tag{6.18}
\end{equation*}
for all $\tilde{L}^{p}$-solutions for $v_0=\p\partial f$, $f\in C^{\infty}_{c}$ for some $T_0>0$. Since $v=S(t)\p\partial f$ and $S(t)$ is an analytic semigroup on $C_{0,\sigma}$ \cite{AG1}, we are able to extend $T_0$ up to an arbitrary time. Since $C_{c}^{\infty}$ is dense in $C^{1}_{0}\cap W^{1,2}$ by Proposition 6.4, we are able to extend (6.18) for $f\in C^{1}_{0}\cap W^{1,2}$. We proved (1.8). The proof is complete.
 \end{proof}

\vspace{2pt}

\begin{rem}
We used $\tilde{L}^{p}$-theory in order to establish (1.7) since $L^{p}$-theory may not be available for general unbounded domains (see \cite{GHHS} for $L^{p}$-theory for uniformly $C^{3}$-domains). If $L^{p}$-theory is available, the statement of Theorem 1.3 is valid by replacing $\tilde{L}^{p}$ to $L^{p}$.  
\end{rem}

\vspace{2pt}

\begin{proof}[Proof of Theorems 1.2 and 1.1]
Since bounded and exterior domains of class $C^{3}$ are strongly admissible, Theorem 1.2 holds. It remains to show (1.5) for all $t>0$ for bounded domains. It is shown in \cite[Remark 5.4 (i)]{AG1} that the maximum of $S(t)v_0$ for $v_0\in C_{0,\sigma}$ exponentially decays as $t\to \infty$, i.e.,  
\begin{equation*}
\big\|S(t)v_0\big\|_{\infty}\leq Ce^{-\mu t}||v_0||_{\infty}\quad \textrm{for}\ t\geq 0
\end{equation*}
with some constants $\mu>0$ and $C>0$. It follows that
\begin{align*}
\big\|S(t)\p\partial f\big\|_{\infty}
&=\big\|S(t-1)S(1)\p\partial f\big\|_{\infty}\\
&\leq Ce^{-\mu (t-1)}\big\|f\big\|_{\infty}^{1-\alpha}\big\|\nabla f\big\|_{\infty}^{\alpha}\quad \textrm{for}\ t\geq 1.
\end{align*}
Thus, the estimate (1.5) is valid for all $t>0$ for bounded domains. We proved Theorem 1.1.
\end{proof}

\section*{acknowledgements}
The most of this work was done at Nagoya University. The author is grateful to Professor Toshiaki Hishida for valuable comments on this work. The author thanks the referee for helpful remarks and informing him of the paper \cite{Chung} related to Proposition B.1. This work was supported by JSPS through the Grant-in-aid for JSPS Fellow No. 26-2251 and Research Activity Start-up 15H06312 and by Kyoto University Research Funds for Young Scientists (Start-up) FY 2015.

\vspace{15pt}

\appendix

\section{$L^{1}$-type results for the Stokes equations in a half space}

\vspace{5pt}

In Appendix A, we recall an existence result for the dual problem (5.7)--(5.10) on $L^{1}$ and give a proof for Proposition 5.4. 

\vspace{5pt}

\begin{prop}
For $f\in C^{\infty}_{c,\sigma}(\R_{+}\times (0,T))$, there exists a smooth solution $(\varphi,\pi)$ of (5.7)--(5.10) in $\overline{\R_{+}}\times [0,T]$ satisfying $\varphi\in {{S}}$ and $\nabla\pi\in L^{\infty}\big(0,T; L^{1}(\R_{+})\big)$.
\end{prop}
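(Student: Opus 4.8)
The plan is to follow the $L^{1}$-theory for the Stokes system in a half space from \cite{A2}. Reversing time by $s=T-t$ and setting $\psi(x,s)=\varphi(x,T-s)$, $\rho(x,s)=\pi(x,T-s)$, $F(x,s)=\partial_{\textrm{tan}}f(x,T-s)$, the backward problem (5.7)--(5.10) becomes the forward Stokes Cauchy problem $\partial_{s}\psi-\Delta\psi+\nabla\rho=F$, $\D\ \psi=0$, $\psi|_{\partial\R_{+}}=0$, $\psi|_{s=0}=0$. Since $f\in C^{\infty}_{c,\sigma}(\R_{+}\times(0,T))$, the forcing $F$ is a smooth, solenoidal, compactly supported field whose temporal support is a compact subset of $(0,T)$; in particular $\p_{\R_{+}}F=F$, so Duhamel's formula yields the solution $\psi(s)=\int_{0}^{s}S(s-\tau)F(\tau)\dd\tau$ in $L^{2}_{\sigma}(\R_{+})$, with associated pressure gradient $\nabla\rho=(I-\p_{\R_{+}})\Delta\psi$. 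Smoothness of $(\psi,\rho)$ on $\overline{\R_{+}}\times[0,T]$, including $\psi=0$ on $\{x_{n}=0\}\cup\{s=0\}$, follows from the smoothing of $S(t)$ together with the fact that $F$ vanishes near $s=0$ and $s=T$; I would only sketch this routine parabolic regularity, e.g.\ via the explicit half-space solution formula.

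The substance is the $L^{1}$-type bounds defining $S$. The essential input is the half-space gradient estimate $\|\nabla S(t)a\|_{L^{1}(\R_{+})}\le Ct^{-1/2}\|a\|_{L^{1}(\R_{+})}$ for $a\in L^{1}_{\sigma}(\R_{+})$ from \cite{GMS}. Commuting the tangential derivative of $F=\partial_{\textrm{tan}}f$ through the translation-invariant semigroup gives $\psi(s)=\int_{0}^{s}\partial_{\textrm{tan}}S(s-\tau)f(\tau)\dd\tau$, whence $\|\psi(s)\|_{L^{1}}\le C\int_{0}^{s}(s-\tau)^{-1/2}\|f(\tau)\|_{L^{1}}\dd\tau$ and $\|\nabla\psi(s)\|_{L^{1}}\le C\int_{0}^{s}(s-\tau)^{-1/2}\|\partial_{\textrm{tan}}f(\tau)\|_{L^{1}}\dd\tau$, both bounded on $[0,T]$. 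For $\nabla^{2}\psi$ and $\partial_{s}\psi$ a single $t^{-1/2}$ factor is not time-integrable, so I would integrate by parts in time: using $AS(\sigma)=-\tfrac{d}{d\sigma}S(\sigma)$ on $L^{2}_{\sigma}$ and $F(\cdot,0)=0$, the substitution $\sigma=s-\tau$ gives $A\psi(s)=\partial_{\textrm{tan}}f(s)-\int_{0}^{s}\partial_{\textrm{tan}}S(\sigma)\partial_{t}f(s-\sigma)\dd\sigma$, bounded in $L^{1}$ by the same argument applied to $\partial_{t}f$; hence $\partial_{s}\psi=F-A\psi\in L^{\infty}(0,T;L^{1})$. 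Taking the divergence of the Stokes equation shows $\rho$ is harmonic with Neumann data on $\{x_{n}=0\}$ given by a surface divergence of boundary traces of $\nabla\psi$ (using $\D\ \psi=0$ and $\psi|_{x_{n}=0}=0$); the resulting $L^{1}$-bound $\nabla\rho\in L^{\infty}(0,T;L^{1})$ is the most delicate point and is obtained in \cite{A2} from the explicit half-space representation, after which $\Delta\psi=\partial_{s}\psi+\nabla\rho-F\in L^{\infty}(0,T;L^{1})$. Finally, $x_{n}^{-1}\psi\in L^{\infty}(0,T;L^{1})$ follows from $\psi|_{x_{n}=0}=0$ and $\nabla\psi,\nabla^{2}\psi\in L^{1}(\R_{+})$: on $\{x_{n}\ge1\}$ it is controlled by $\psi\in L^{1}$, and on $\{x_{n}<1\}$ one writes $\psi(x',x_{n})=x_{n}\int_{0}^{1}\partial_{n}\psi(x',x_{n}\theta)\dd\theta$ and uses the uniform slicewise bound $\sup_{x_{n}>0}\|\partial_{n}\psi(\cdot,x_{n})\|_{L^{1}(\mathbb{R}^{n-1})}\le\|\nabla\psi(\cdot,0)\|_{L^{1}(\mathbb{R}^{n-1})}+\|\nabla^{2}\psi\|_{L^{1}(\R_{+})}$. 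Undoing the time reversal gives $\varphi\in S$ and $\nabla\pi\in L^{\infty}(0,T;L^{1}(\R_{+}))$.

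I expect the main obstacle to be exactly these $L^{1}$-estimates, and within them the harmonic pressure bound: because $S(t)$ is unbounded on $L^{1}$, every estimate must be routed through the gradient estimate of \cite{GMS}, so one must spend the tangential derivative carried by $F$ and an integration by parts in $t$ with care to avoid a non-integrable singularity, while $\nabla\pi$ in addition requires the sharper $L^{1}$ kernel analysis in a half space. Accordingly I would present existence and regularity quickly and reproduce the $L^{1}$ part by following \cite{A2} closely, pointing out the modifications needed to accommodate the forcing $\partial_{\textrm{tan}}f$.
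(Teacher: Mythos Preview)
Your approach is essentially the same as the paper's. The paper's proof simply cites \cite[Proposition 2.4]{A2} for the existence of a smooth solution with $\partial_t^{s}\partial_x^{k}\varphi,\nabla\pi\in L^{\infty}(0,T;L^{1})$ and the mixed-norm bound $\partial_n\varphi\in L^{\infty}(0,T;L^{\infty}(\mathbb{R}_{+};L^{1}(\mathbb{R}^{n-1})))$, and then derives $x_n^{-1}\varphi\in L^{\infty}(0,T;L^{1})$ from the latter by the same slice argument you use; your proposal reconstructs in more detail what \cite{A2} does (time reversal, Duhamel, the gradient estimate from \cite{GMS}, integration by parts in $t$), and for the $x_n^{-1}\varphi$ step you recover the slicewise $L^{1}$ control of $\partial_n\psi$ from $\nabla^{2}\psi\in L^{1}$ and the boundary trace rather than quoting the mixed-norm bound directly.
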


\begin{proof}
The assertion is essentially proved in \cite[Proposition 2.4]{A2}. It is proved that smooth solutions $(\varphi,\pi)$ exist and satisfy $\partial_{t}^{s}\partial_{x}^{k}\varphi, \nabla \pi\in L^{\infty}(0,T; L^{1})$ ($0\leq 2s+|k|\leq 2$), $\varphi=0$ on $\{x_n=0\}\cup \{t=T\}$ and 
\begin{equation*}
\partial_{n}\varphi\in L^{\infty}\Big(0,T; L^{\infty}\big(\mathbb{R}_{+}; L^{1}\big(\mathbb{R}^{n-1}\big)\big)\Big).     
\end{equation*}
Here, $L^{\infty}(\mathbb{R}_{+}; L^{1}(\mathbb{R}^{n-1}))$ denotes the space of all essentially bounded functions $g(\cdot,x_n): \mathbb{R}_{+}\to L^{1}(\mathbb{R}^{n-1})$ and is equipped with the norm $||g||_{L^{\infty}(\mathbb{R}_{+}; L^{1}(\mathbb{R}^{n-1}))}=\textrm{ess sup}_{x_n>0}||g||_{L^{1}(\mathbb{R}^{n-1})}(x_n)$.

The solution $\varphi$ satisfies $x_n^{-1}\varphi\in L^{\infty}(0,T; L^{1})$ (i.e., $\varphi\in S$). In fact, by $\varphi=0$ on $\{x_n=0\}$ and 
\begin{equation*}
\big\|\varphi\big\|_{L^{1}(\mathbb{R}^{n-1})}(x_n)\leq x_n\big\|\partial_{n}\varphi\big\|_{L^{\infty}(\mathbb{R}_{+}; L^{1}(\mathbb{R}^{n-1}))},
\end{equation*}
it follows that 
\begin{equation*}
\big\|x_n^{-1}\varphi\big\|_{L^{1}(\R_{+})}
\leq \int_{0}^{\infty}\frac{1}{x_n}\big\|\varphi\big\|_{L^{1}(\mathbb{R}^{n-1})}(x_n)\dd x_{n}
\leq \big\|\partial_{n}\varphi\big\|_{L^{\infty}(\mathbb{R}_{+}; L^{1}(\mathbb{R}^{n-1}))}+\big\|\varphi\big\|_{L^{1}(\R_{+})}
\end{equation*}
so $x_n^{-1}\varphi\in L^{\infty}\big(0,T; L^{1}\big)$ and $\varphi\in S$.
\end{proof}

\vspace{5pt}

We give a proof for Proposition 5.4. 

\vspace{5pt}

\begin{prop}
Under the assumption of Lemma 5.2, the condition (5.3) is extendable for all $\varphi\in C^{\infty}_{c}(\overline{\R_{+}}\times [0,T])$ satisfying 
$\varphi=0$ on $\{x_n=0\}\cup \{t=T\}$.
\end{prop}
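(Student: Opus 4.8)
The plan is to deduce the claim from the weak form (5.3), which is assumed only for test functions compactly supported in the \emph{open} half space and in $[0,T)$, by cutting $\varphi$ off both near the hyperplane $\{x_n=0\}$ and near $\{t=T\}$ and then letting the cut‑off parameters tend to $0$. Fix $\theta\in C^\infty[0,\infty)$ with $0\le\theta\le 1$, $\theta\equiv 0$ on $[0,1]$ and $\theta\equiv 1$ on $[2,\infty)$, and $\eta\in C^\infty[0,\infty)$ with the same properties; set $\theta_\varepsilon(x_n)=\theta(x_n/\varepsilon)$ and $\eta_\delta(t)=\eta((T-t)/\delta)$. Given $\varphi\in C^\infty_c(\overline{\R_{+}}\times[0,T])$ with $\varphi=0$ on $\{x_n=0\}\cup\{t=T\}$, the function $\varphi_{\varepsilon,\delta}=\theta_\varepsilon\eta_\delta\varphi$ lies in $C^\infty_c(\R_{+}\times[0,T))$, so (5.3) applies to it. Expanding the derivatives, using $\partial_{x_n}\theta_\varepsilon=\varepsilon^{-1}\theta'(x_n/\varepsilon)$ and $\partial_{x_n}^2\theta_\varepsilon=\varepsilon^{-2}\theta''(x_n/\varepsilon)$, gives
\[0=\int_0^T\int_{\R_{+}}\theta_\varepsilon\eta_\delta\bigl(v\cdot(\partial_t\varphi+\Delta\varphi)-\nabla q\cdot\varphi\bigr)\dd x\dd t+I_{\varepsilon,\delta}+II_{\varepsilon,\delta}+III_{\varepsilon,\delta},\]
where $I_{\varepsilon,\delta}=\tfrac{2}{\varepsilon}\int_0^T\int_{\R_{+}}\eta_\delta\,\theta'(x_n/\varepsilon)\,v\cdot\partial_{x_n}\varphi$, $II_{\varepsilon,\delta}=\tfrac{1}{\varepsilon^2}\int_0^T\int_{\R_{+}}\eta_\delta\,\theta''(x_n/\varepsilon)\,v\cdot\varphi$ and $III_{\varepsilon,\delta}=-\tfrac{1}{\delta}\int_0^T\int_{\R_{+}}\theta_\varepsilon\,\eta'((T-t)/\delta)\,v\cdot\varphi$.

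The contribution $III_{\varepsilon,\delta}$ is harmless: its integrand is supported in $\{T-t\in[\delta,2\delta]\}$, where $|v|\le C$ by (5.4) (here $t\ge T/2$) and, since $\varphi$ vanishes at $t=T$, $|\varphi(x,t)|\le\|\partial_t\varphi\|_\infty(T-t)\le 2\delta\|\partial_t\varphi\|_\infty$; with $|\eta'|\le C$ and $\varphi$ compactly supported this gives $|III_{\varepsilon,\delta}|\le C\delta$ uniformly in $\varepsilon$. For the main term, the integrand $v\cdot(\partial_t\varphi+\Delta\varphi)-\nabla q\cdot\varphi$ is integrable on $\R_{+}\times(0,T)$: $\varphi$ is compactly supported and $\partial_t\varphi,\Delta\varphi$ bounded while $|v|\le Ct^{-\gamma}$ with $\gamma<1/2$, and because $\varphi=0$ on $\{x_n=0\}$ one has $|\varphi|\le\|\partial_{x_n}\varphi\|_\infty x_n$, so (5.5) yields $|\nabla q\cdot\varphi|\le C t^{-\gamma-1/2}x_n(x_n^2+t)^{-1/2}\le Ct^{-\gamma-1/2}$, integrable since $\gamma+\tfrac12<1$. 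Hence, letting first $\delta\downarrow 0$ and then $\varepsilon\downarrow 0$ (dominated convergence, with $\eta_\delta\to1$ on $\{t<T\}$ and $\theta_\varepsilon\to1$ on $\{x_n>0\}$, boundedly), the main term converges to $\int_0^T\int_{\R_{+}}(v\cdot(\partial_t\varphi+\Delta\varphi)-\nabla q\cdot\varphi)\dd x\dd t$, which is the quantity we must show vanishes.

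The real work is the commutator term $II_{\varepsilon,\delta}$, which carries the factor $\varepsilon^{-2}$ against an integral over the thin layer $\{x_n\in[\varepsilon,2\varepsilon]\}$ (and $I_{\varepsilon,\delta}$ carries $\varepsilon^{-1}$); the crude bound $|v|\le Ct^{-\gamma}$ alone makes these only $O(1)$. Here I would use the Dirichlet condition (1.3) together with the hypothesis of Lemma 5.2 that $\nabla v(\cdot,t)$ is bounded on $\R_{+}$ for each $t\in(0,T)$: integrating $\partial_{x_n}v$ from the boundary gives $|v(x,t)|\le x_n M(t)$ with $M(t)=\|\nabla v(\cdot,t)\|_{L^\infty(\R_{+})}<\infty$, whence $|v\cdot\partial_{x_n}\varphi|\le Cx_n M(t)$ and, since $\varphi$ also vanishes on $\{x_n=0\}$, $|v\cdot\varphi|\le Cx_n^2 M(t)$ — each to be combined with the competing estimate coming from $|v|\le Ct^{-\gamma}$. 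On $\{x_n\le 2\varepsilon\}$ these extra powers of $x_n\le 2\varepsilon$ exactly absorb the $\varepsilon^{-1}$ and $\varepsilon^{-2}$, so after $\delta\downarrow 0$ one obtains $|I_\varepsilon|+|II_\varepsilon|\le C\int_0^T\min\bigl(Ct^{-\gamma},C\varepsilon M(t)\bigr)\dd t$. Since $\min(Ct^{-\gamma},C\varepsilon M(t))\le Ct^{-\gamma}\in L^1(0,T)$ and tends to $0$ pointwise as $\varepsilon\downarrow 0$ (as $M(t)<\infty$ for each $t$), dominated convergence forces $I_\varepsilon,II_\varepsilon\to 0$. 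Passing to the limits $\delta\downarrow0$ then $\varepsilon\downarrow0$ in the displayed identity yields $\int_0^T\int_{\R_{+}}(v\cdot(\partial_t\varphi+\Delta\varphi)-\nabla q\cdot\varphi)\dd x\dd t=0$, i.e.\ (5.3) for $\varphi$. The main obstacle, as indicated, is controlling $II_{\varepsilon,\delta}$: one must use that \emph{both} $v$ and $\varphi$ vanish to first order on $\{x_n=0\}$, not just one of them, and that $|v|\le Ct^{-\gamma}$ with $\gamma<1/2$ provides an $L^1$ time majorant for the resulting bounds.
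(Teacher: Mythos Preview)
Your proof is correct and follows essentially the same approach as the paper: cut off $\varphi$ near $\{x_n=0\}$ and near $\{t=T\}$, expand the derivatives, and show the commutator terms vanish by dominated convergence in time with $t^{-\gamma}$ as majorant; the paper carries out the two cut-offs sequentially rather than simultaneously, but this is cosmetic. The one substantive difference is how the smallness of $v$ in the thin layer $\{x_n\in[\varepsilon,2\varepsilon]\}$ is obtained: you invoke the hypothesis $\|\nabla v(\cdot,t)\|_{L^\infty}<\infty$ from Lemma~5.2 to write $|v|\le x_n M(t)\le 2\varepsilon M(t)$, whereas the paper uses only the continuity of $v$ up to the boundary together with $v=0$ on $\{x_n=0\}$, setting $\eta^R_m(t)=\sup\{|v(x,t)|:|x|\le R,\ 1/m\le x_n\le 2/m\}$ and observing $\eta^R_m(t)\to 0$ pointwise with the same $Ct^{-\gamma}$ dominant. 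Both routes are valid under the assumptions of Lemma~5.2; the paper's is marginally more economical in that it does not need the gradient bound for this step, while yours has the advantage of giving an explicit quantitative rate in the layer.
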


\begin{proof}[Proof of Proposition 5.4]
We cutoff $\varphi\in S$ as $|x|\to\infty$. Let $\theta\in C^{\infty}_{c}[0,\infty)$ be a smooth cutoff function satisfying $\theta\equiv 1$ in $[0,1]$ and $\theta\equiv 0$ in $[2,\infty)$. We set $\theta_{m}(x)=\theta(|x|/m)$ for $m\geq 1$ and $\varphi_{m}=\varphi\theta_{m}$. Then, (5.3) holds for $\varphi_m$ by Proposition A.2, i.e., 
\begin{align*}
0&=\int_{0}^{T}\int_{\R_{+}}(v\cdot (\partial_{t}\varphi_{m}+\Delta \varphi_{m})-\nabla q\cdot \varphi_{m})\dd x\dd t  \\
&=(v,\partial_{t}\varphi_{m})
+(v,\Delta\varphi_{m})
+(-\nabla q, \varphi_{m}).
\end{align*}
Since $\varphi\in S$ satisfies $\partial_{t}^{s}\partial_{x}^{k}\varphi\in L^{\infty}(0,T; L^{1})$ ($0\leq 2s+|k|\leq 2$), the first two terms converge to $(v,\partial_{t}\varphi+\Delta \varphi)$ as $m\to\infty$. Since $\varphi$ satisfies $x_n^{-1}\varphi\in L^{\infty}(0,T; L^{1})$, the last term converges to $(-\nabla q,\varphi)$. Thus, the condition (5.3) is extendable for all $\varphi\in S$.  
\end{proof}

\begin{proof}[Proof of Proposition A.2]
We show that the condition (5.3) is extendable for all $\varphi\in C_{c}^{\infty}(\overline{\R_{+}}\times [0,T))$ satisfying $\varphi=0$ on $\{x_n=0\}$. Let $\theta\in C^{\infty}_{c}[0,\infty)$ be the smooth cut-off function as above and set $\rho_{m}(x_n)=1-\tilde{\theta}_{m}(x_n)$ by $\tilde{\theta}_{m}(x_n)=\theta(m x_n)$ for $m\geq 1$ so that $\rho_{m}\in C^{\infty}[0,\infty)$ satisfies $\rho_{m}\equiv 0$ for $x_n\leq 1/m$ and $\rho_{m}\equiv 1$ for $x_n\geq 2/m$. We substitute $\tilde{\varphi}_{m}=\varphi\rho_{m}$ into (5.3) and observe that 
\begin{align*}
0&=\int_{0}^{T}\int_{\R_{+}}\big(v\cdot(\partial_{t}\tilde{\varphi}_{m}+\Delta \tilde{\varphi}_{m})-\nabla q\cdot \tilde{\varphi}_{m}\big)\textrm{d}x\textrm{d}t  \\
&=(v,\partial_{t}\tilde{\varphi}_{m})
+(v,\Delta\tilde{\varphi}_{m})
+(-\nabla q, \tilde{\varphi}_{m})
\end{align*}
The first term converges to $(v,\partial_{t} \varphi)$. Since $\varphi$ is vanishing on $\{x_n=0\}$, the last term converges to $(-\nabla q,\varphi)$. 

We show that the second term converges to $(v,\Delta \varphi)$. Since  
\begin{align*}
\Delta\tilde{\varphi}_{m}
&=\Delta\varphi\rho_{m}+2\partial_{n}\varphi\partial_{n}\rho_{m}+\varphi\partial^{2}_{n}\rho_{m}\\
&=\Delta\varphi(1-\tilde{\theta}_{m})-2\partial_{n}\varphi\partial_{n}\tilde{\theta}_{m}-\varphi\partial^{2}_{n}\tilde{\theta}_{m},
\end{align*}
it follows that 
\begin{align*}
\int_{0}^{T}\int_{\R_{+}}v\cdot \Delta(\varphi-\tilde{\varphi}_{m})\dd x\dd t
&=\int_{0}^{T}\int_{\R_{+}}v\cdot (\Delta \varphi\tilde{\theta}_{m}+2\partial_{n}\varphi\partial_{n}\tilde{\theta}_{m}+\varphi\partial_{n}^{2}\tilde{\theta}_{m}  )\dd x\dd t  \\
&=:I_{m}+II_{m}+III_{m}.
\end{align*}
The first term $I_m$ converges to zero since $\tilde{\theta}_{m}$ is supported in $\{0\leq x_n\leq 2/m\}$. We show that $II_{m}$ converges to zero. We take $R>0$ such that $\textrm{spt}\ \varphi\subset B_{0}(R)\times [0,T]$ and set 
\begin{equation*}
\eta^{R}_{m}(t)=\sup\left\{\big|v(x,t)\big|\ \Bigg|\ |x|\leq R,\ \frac{1}{m}\leq x_n\leq \frac{2}{m}   \right\}.
\end{equation*}
We observe that $\eta^{R}_{m}(t)\to 0$ as $m\to\infty$ for each $t\in (0,T]$ since $v$ is vanishing on $\{x_n=0\}$. Moreover, $\eta_{m}^{R}(t)$ is estimated by $C/t^{\gamma}$ with some constant $C$ by (5.4). Since $\partial_{n}\tilde{\theta}_{m}$ is supported in $\{1/m\leq |x|\leq  2/m\}$ and $||\partial_{n}\tilde{\theta}_{m}||_{\infty}\leq m||\partial_{n}{\theta}||_{\infty}$, it follows that 
\begin{align*}
\big|II_{m}\big|
&\leq Cm\int_{0}^{T}\eta_{m}^{R}(t)\dd t\int_{\frac{1}{m}}^{\frac{2}{m}}\big\|\partial_{n}\varphi\big\|_{L^{1}(\mathbb{R}^{n-1})}(x_n,t)\dd x_n  \\
&\leq C\Big(\sup\Big\{\big\|\partial_{n}\varphi\big\|_{L^{1}(\mathbb{R}^{n-1})}(x_n,t)\ \Big|\ x_n\in \mathbb{R}_{+}, t\in [0,T] \Big\} \Big)\int_{0}^{T}\eta_{m}^{R}(t)\dd t\to 0\quad \textrm{as}\ m\to\infty.
\end{align*}
It remains to show that $III_m\to 0$. Since $\varphi$ is vanishing on $\{x_n=0\}$, we have 
\begin{align*}
\varphi(x',x_n)=\int_{0}^{x_n}\frac{\partial \varphi}{\partial s}(x',s)\dd s.
\end{align*}
We estimate 
\begin{align*}
\int_{\frac{1}{m}}^{\frac{2}{m}}||\varphi||_{L^{1}(\mathbb{R}^{n-1})}(x_n)\dd x_n
&\leq \int_{\frac{1}{m}}^{\frac{2}{m}}\int_{0}^{x_n}||\partial_n\varphi||_{L^{1}(\mathbb{R}^{n-1})}(s)\dd s \dd x_n\\
&\leq \frac{3}{2m^{2}}||\partial_n\varphi||_{L^{\infty}(\mathbb{R}_{+}; L^{1}(\mathbb{R}^{n-1}))},
\end{align*}
where the time variable is suppressed. Since $||\partial_n^{2}\tilde{\theta}_m||_{\infty}\leq m^{2}||\partial_n^{2}{\theta}||_{\infty}$, it follows that 
\begin{align*}
|III_m|
&\leq Cm^{2}\int_{0}^{T}\eta_m^{R}(t)\dd t\int_{\frac{1}{m}}^{\frac{2}{m}}||\varphi||_{L^{1}(\mathbb{R}^{n-1})}(x_n,t)\dd x_n\\
&\leq C'\Bigg(\sup_{t\in [0,T]}||\partial_n\varphi||_{L^{\infty}(\mathbb{R}_{+}; L^{1}(\mathbb{R}^{n-1}))}(t)\Bigg)\int_{0}^{T}\eta_{m}^{R}(t)\dd t\to 0\quad \textrm{as}\ m\to \infty.
\end{align*}
We proved that the condition (5.3) is extendable for all $\varphi\in C^{\infty}_{c}\big(\overline{\R_{+}}\times [0,T)\big)$ satisfying $\varphi=0$ on $\{x_n=0\}$. By a similar cut-off argument near $t=T$, we are able to extend (5.3) for all $\varphi\in C^{\infty}_{c}\big(\overline{\R_{+}}\times [0,T]\big)$ satisfying $\varphi=0$ on $\{x_n=0\}\  \cup \{t=T\}$. The proof is now complete.  
\end{proof}

\vspace{10pt}

\section{Uniqueness of the heat equation}

In Appendix B, we give some uniqueness results for the heat equation in the whole space and a half space, used in the proof of Lemma 5.2 and Proposition 6.2. The uniqueness of the heat equation is studied under very weak regularity conditions near time zero (see, e.g., \cite{Sha}, \cite{ChungKim}, \cite{Chung}). We prove uniqueness under a bound for $t^{\gamma}||v||_{\infty}$ and $\gamma<1$ based on a duality argument.

\vspace{5pt}

\begin{prop}
Let $T>0$ and $n\geq 1$. Let $v\in L^{1}_{\textrm{loc}}(\mathbb{R}^{n}\times [0,T))$ satisfy
\begin{align*}
\int_{0}^{T}\int_{\mathbb{R}^{n}}v(\partial_t \varphi+\Delta \varphi)\dd x\dd t=0   \tag{B.1}
\end{align*}
for all $\varphi\in C_{c}^{\infty}(\R\times [0,T))$. Assume that 
\begin{align*}
\sup_{0<t\leq T}t^{\gamma}||v||_{L^{\infty}(\R)}<\infty  \tag{B.2}
\end{align*}
for some $\gamma\in [0,1)$. Then, $v\equiv 0$.
\end{prop}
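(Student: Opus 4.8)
The plan is a duality argument against the backward heat equation, with a spatial cut-off whose error terms are absorbed by the hypothesis $\gamma<1$. It suffices to show that $\int_{0}^{T}\int_{\R}v\,\psi\,\dd x\dd t=0$ for every $\psi\in C^{\infty}_{c}(\R\times(0,T))$; this forces $v=0$ almost everywhere in $\R\times(0,T)$, hence $v\equiv 0$ since a distributional solution of the heat equation is smooth in the interior by hypoellipticity. Fix such a $\psi$, with $\textrm{spt}\,\psi\subset\R\times[a,b]$ for some $0<a<b<T$, and set
\[
\varphi(x,t)=\int_{t}^{T}\bigl(e^{(s-t)\Delta}\psi(\cdot,s)\bigr)(x)\,\dd s .
\]
Then $\varphi\in C^{\infty}(\R\times[0,T))$, $\varphi(\cdot,t)\equiv 0$ for $t\geq b$, and $\partial_{t}\varphi+\Delta\varphi=-\psi$ on $\R\times(0,T)$. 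Since $\psi(\cdot,s)$ is supported in a fixed compact set and the heat times $s-t$ occurring in the integral are bounded by $b$, the functions $\varphi(\cdot,t)$ and $\nabla\varphi(\cdot,t)$ decay faster than any power of $|x|$, uniformly for $t\in[0,T)$.

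Next I would regularize $\varphi$ in space. Let $\theta\in C^{\infty}_{c}[0,\infty)$ satisfy $\theta\equiv 1$ on $[0,1]$, $\theta\equiv 0$ on $[2,\infty)$, and set $\theta_{R}(x)=\theta(|x|/R)$, so that $\theta_{R}\varphi\in C^{\infty}_{c}(\R\times[0,T))$ is admissible in (B.1). Applying (B.1) to $\theta_{R}\varphi$ and using $\Delta(\theta_{R}\varphi)=\theta_{R}\Delta\varphi+2\nabla\theta_{R}\cdot\nabla\varphi+\varphi\Delta\theta_{R}$ together with $\partial_{t}\varphi+\Delta\varphi=-\psi$ gives
\[
\int_{0}^{T}\!\!\int_{\R}v\,\theta_{R}\,\psi\,\dd x\dd t
=\int_{0}^{T}\!\!\int_{\R}v\,\bigl(2\nabla\theta_{R}\cdot\nabla\varphi+\varphi\Delta\theta_{R}\bigr)\,\dd x\dd t .
\]
As $R\to\infty$ the left-hand side converges to $\int_{0}^{T}\int_{\R}v\,\psi\,\dd x\dd t$ by dominated convergence, since $|v\theta_{R}\psi|\leq|v\psi|$ and $v\psi\in L^{1}(\R\times(0,T))$. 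On the right-hand side $\nabla\theta_{R}$ and $\Delta\theta_{R}$ are $O(R^{-1})$ and $O(R^{-2})$ and supported in $\{R\leq|x|\leq 2R\}$, where the rapid decay of $\varphi,\nabla\varphi$ makes $\int_{R\leq|x|\leq 2R}(|\nabla\varphi|+|\varphi|)(\cdot,t)\,\dd x$ go to $0$ faster than any power of $R$, uniformly in $t$; combined with
\[
\int_{0}^{T}\big\|v(t)\big\|_{L^{\infty}(\R)}\,\dd t\leq M\int_{0}^{T}t^{-\gamma}\,\dd t<\infty ,
\]
which is where $\gamma<1$ is used, the right-hand side tends to $0$. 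Hence $\int_{0}^{T}\int_{\R}v\,\psi\,\dd x\dd t=0$, as required.

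The main obstacle is precisely this limiting step: one must turn $\varphi$ — a genuinely global-in-space solution of the backward problem that does not vanish at $t=0$ — into an admissible test function for (B.1) while ensuring that the commutator errors from the cut-off disappear. This hinges on two points, namely the uniform-in-$t$ super-polynomial decay of $\varphi$ and $\nabla\varphi$ in $x$ (available because $\textrm{spt}\,\psi$ is bounded away from $t=0$, so the relevant heat times stay bounded) and the temporal integrability $\int_{0}^{T}\|v(t)\|_{\infty}\,\dd t<\infty$, which is exactly the content of (B.2) with $\gamma<1$. The remaining ingredients — smoothness of $\varphi$, the identity $\partial_{t}\varphi+\Delta\varphi=-\psi$, and interior hypoellipticity of the heat operator — are routine.
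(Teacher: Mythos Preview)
Your proof is correct and follows essentially the same approach as the paper's: both construct a solution of the backward heat equation via the heat semigroup, insert a spatial cutoff to make it an admissible test function in (B.1), and use the hypothesis $\gamma<1$ to ensure $\int_0^T \|v(t)\|_{L^\infty(\R)}\,\dd t<\infty$ so that the commutator errors from the cutoff vanish. The only cosmetic difference is that the paper first abstracts the cutoff step as extending (B.1) to all smooth $\varphi$ with $\partial_t^s\partial_x^k\varphi\in L^\infty(0,T;L^1(\R))$ and $\varphi=0$ on $\{t=T\}$, whereas you carry out the cutoff directly on the specific backward-heat solution using its pointwise Gaussian decay.
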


\begin{proof}
Under the assumption (B.2), the condition (B.1) is extendable for all $\varphi\in C^{\infty}(\R\times [0,T])$ satisfying 
\begin{align*}
\partial_t^{s}\partial_x^{k}\varphi\in L^{\infty}(0,T; L^{1}(\R))\quad \textrm{and}\quad \varphi=0\quad \textrm{on}\ \{t=T\},  \tag{B.3}
\end{align*}
for $2s+|k|\leq 2$. In fact, for $\varphi\in C^{\infty}(\R\times [0,T])$ satisfying $\partial_t^{s}\partial_x^{k}\varphi\in L^{\infty}(0,T; L^{1})$ and $\varphi\in C^{\infty}_{c}([0,T); L^{1})$, we set $\varphi_m=\varphi\theta_m$ by the cut-off function $\theta_{m}(x)=\theta(|x|/m)$ and $\theta\in C_{c}^{\infty}[0,\infty)$ satisfying $\theta= 1$ in $[0,1]$ and $\theta=0$ in $[2,\infty)$. We substitute $\varphi_m$ into (B.1) to get 
\begin{align*}
0=\int_{0}^{T}\int_{\mathbb{R}^{n}}v\Big((\partial_t \varphi+\Delta \varphi)\theta_m+2\nabla \varphi\cdot \nabla \theta_m+\varphi\Delta \theta_m\Big)\dd x\dd t.
\end{align*}
Since $v$ is integrable near time zero by (B.2), the first term converges to $(v,\partial_t\varphi+\Delta \varphi)$. The other terms vanish as $m\to\infty$ since $\partial_x^{k}\varphi\in L^{\infty}(0,T; L^{1})$ and $||\partial_x^{k}\theta_m||_{\infty}\leq C/m^{|k|}$. By a similar cut-off argument near $t=T$, we are able to extend (B.1) for all $\varphi$ satisfying (B.3).

For an arbitrary $f\in C_{c}^{\infty}(\mathbb{R}^{n}\times (0,T))$, we set $\tilde{f}(\cdot, t)=-{f}(\cdot, T-t)$ and 
\begin{align*}
\tilde{\varphi}(\cdot,t)=\int_{0}^{t}e^{(t-s)\Delta}\tilde{f}(s)\dd s.
\end{align*}
Then, by $L^{1}$-estimates of the heat semigroup, we have $\partial_t^{s}\partial_x^{k}\tilde{\varphi}\in L^{\infty}(0,T; L^{1})$ for $2s+|k|\leq 2$. Moreover, $\tilde{\varphi}$ satisfies $\partial_t \tilde{\varphi}-\Delta \tilde{\varphi}=\tilde{f}$ in $\mathbb{R}^{n}\times (0,T)$ and $\tilde{\varphi}=0$ on $\{t=0\}$. We set $\varphi(\cdot,t)=\tilde{\varphi}(\cdot,T-t)$ and obtain $\varphi$ satisfying $\partial_t {\varphi}+\Delta {\varphi}={f}$ in $\mathbb{R}^{n}\times (0,T)$ and ${\varphi}=0$ on $\{t=T\}$. Since $\varphi$ satisfies (B.3), it follows that 
\begin{align*}
\int_{0}^{T}\int_{\R}vf\dd x\dd t
=\int_{0}^{T}\int_{\R}v(\partial_t\varphi+\Delta\varphi)\dd x\dd t
=0.
\end{align*}
We proved $v\equiv 0$. 
\end{proof}

\vspace{5pt}

\begin{lem}
Let $v\in L^{1}_{\textrm{loc}}(\mathbb{R}^{n}_{+}\times [0,T))$ satisfy
\begin{align*}
\int_{0}^{T}\int_{\mathbb{R}^{n}_{+}}v(\partial_t \varphi+\Delta \varphi)\dd x\dd t=0   \tag{B.4}
\end{align*}
for all $\varphi\in C_{c}^{\infty}(\mathbb{R}^{n}_{+}\times [0,T))$. Assume that $v(\cdot,t)\in C(\overline{\mathbb{R}^{n}_{+}})$ satisfies $v=0$ on $\partial\mathbb{R}^{n}_{+}$ and 
\begin{align*}
\sup_{0<t\leq T}t^{\gamma}||v||_{L^{\infty}(\mathbb{R}^{n}_{+})}<\infty  \tag{B.5}
\end{align*}
for some $\gamma\in [0,1)$. Then, $v\equiv 0$.
\end{lem}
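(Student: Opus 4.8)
The plan is to reduce Lemma B.2 to the whole-space uniqueness of Proposition B.1 by an odd reflection across $\{x_n=0\}$, using a cut-off near the boundary in the spirit of the proof of Proposition A.2. Let $\tilde{v}$ be the odd extension of $v$ to $\R$, i.e.\ $\tilde{v}(x',x_n,t)=v(x',x_n,t)$ for $x_n\geq 0$ and $\tilde{v}(x',x_n,t)=-v(x',-x_n,t)$ for $x_n<0$. Since $v(\cdot,t)\in C(\overline{\R_{+}})$ vanishes on $\partial\R_{+}$, the function $\tilde{v}(\cdot,t)$ is continuous on $\R$, and (B.5) gives $t^{\gamma}\|\tilde{v}\|_{L^{\infty}(\R)}(t)\leq C$; as $\gamma<1$, this bound is integrable in $t$ near $t=0$, so $\tilde{v}\in L^{1}_{\textrm{loc}}(\R\times[0,T))$. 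It therefore suffices to show that $\tilde{v}$ satisfies (B.1) for every $\varphi\in C^{\infty}_{c}(\R\times[0,T))$; then Proposition B.1 gives $\tilde{v}\equiv 0$, hence $v\equiv 0$.

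To verify (B.1) for $\tilde{v}$, I would split $\varphi$ into its even and odd parts in $x_n$, $\varphi=\varphi_{e}+\varphi_{o}$ with $\varphi_{o}(x',x_n,t)=\tfrac12\big(\varphi(x',x_n,t)-\varphi(x',-x_n,t)\big)$. Since $\partial_{t}$ and $\Delta$ preserve the parity in $x_n$ and $\tilde{v}$ is odd in $x_n$, the even part drops out and $\int_{\R}\tilde{v}(\partial_{t}\varphi+\Delta\varphi)\dd x=2\int_{\R_{+}}v(\partial_{t}\varphi_{o}+\Delta\varphi_{o})\dd x$ for a.e.\ $t$. The function $\varphi_{o}$ is smooth, compactly supported in $\R\times[0,T)$ and vanishes on $\{x_n=0\}$, but it is not supported away from the boundary, so it cannot be substituted directly into (B.4). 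Instead I would insert the boundary cut-off $\rho_{m}(x_n)=1-\theta(mx_n)$ from the proof of Proposition A.2, so that $\psi_{m}=\varphi_{o}\rho_{m}\in C^{\infty}_{c}(\R_{+}\times[0,T))$, and test (B.4) against $\psi_{m}$, which gives
\[
0=\int_{0}^{T}\!\!\int_{\R_{+}}v\Big((\partial_{t}\varphi_{o}+\Delta\varphi_{o})\rho_{m}+2\partial_{n}\varphi_{o}\,\partial_{n}\rho_{m}+\varphi_{o}\,\partial_{n}^{2}\rho_{m}\Big)\dd x\dd t.
\]
On the support of $\varphi$ one has $|v|\leq Ct^{-\gamma}$ with $\gamma<1$, so by dominated convergence the first term tends to $\int_{0}^{T}\!\int_{\R_{+}}v(\partial_{t}\varphi_{o}+\Delta\varphi_{o})\dd x\dd t$ as $m\to\infty$.

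The only real work — and the main obstacle — is to show that the two commutator integrals vanish in the limit; this is where the boundary hypotheses on $v$ enter. Fix $R$ with $\textrm{spt}\,\varphi\subset B_{0}(R)\times[0,T]$ and set $\eta^{R}_{m}(t)=\sup\{|v(x,t)|\ |\ |x'|\leq R,\ 1/m\leq x_n\leq 2/m\}$. Since $v(\cdot,t)$ is continuous up to $\partial\R_{+}$ and vanishes there, $\eta^{R}_{m}(t)\to0$ for each $t\in(0,T]$, while $\eta^{R}_{m}(t)\leq Ct^{-\gamma}$ by (B.5). Using $\|\partial_{n}^{k}\rho_{m}\|_{\infty}\leq Cm^{k}$ and the fact that $\partial_{n}\rho_{m}$ and $\partial_{n}^{2}\rho_{m}$ are supported in $\{1/m\leq x_n\leq 2/m\}$, a set of volume $\leq C_{R}/m$ inside $B_{0}(R)$, the term with $\partial_{n}\rho_{m}$ is bounded by $C_{R}\,\eta^{R}_{m}(t)$; for the term with $\partial_{n}^{2}\rho_{m}$ one first uses that $\varphi_{o}$ is odd, hence $|\varphi_{o}(x',x_n,t)|\leq x_n\|\partial_{n}\varphi_{o}\|_{\infty}\leq (2/m)\|\partial_{n}\varphi_{o}\|_{\infty}$ on $\{1/m\leq x_n\leq 2/m\}$, which cancels one power of $m$ and again leaves a bound $C_{R}\,\eta^{R}_{m}(t)$. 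Both bounds tend to $0$ for each $t$ and are dominated by the integrable function $Ct^{-\gamma}$, so dominated convergence shows the two commutator integrals vanish as $m\to\infty$. Hence $\int_{0}^{T}\!\int_{\R_{+}}v(\partial_{t}\varphi_{o}+\Delta\varphi_{o})\dd x\dd t=0$, so $\tilde{v}$ satisfies (B.1) for all $\varphi\in C^{\infty}_{c}(\R\times[0,T))$, and Proposition B.1 completes the proof.
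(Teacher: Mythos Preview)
Your proof is correct and follows essentially the same route as the paper: odd reflection of $v$ to $\R$, reduction to Proposition B.1, and the key step of extending (B.4) to test functions that merely vanish on $\{x_n=0\}$ via the boundary cut-off $\rho_m$ exactly as in the proof of Proposition A.2. The paper simply invokes that extension by reference and then tests against $\varphi(x',x_n,t)=\phi(x',x_n,t)-\phi(x',-x_n,t)$, which is precisely your $2\varphi_o$; you instead carry out the cut-off argument explicitly for $\varphi_o$, but the content is the same.
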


\begin{proof}
We reduce the problem by reflection. By (B.5) and the Dirichlet boundary condition, we are able to extend (B.4) for all $\varphi\in C_{c}^{\infty}(\overline{\mathbb{R}^{n}_{+}}\times [0,T))$ satisfying $\varphi=0$ on $\{x_n=0\}$ as we did in the proof of Proposition A.2. We set the odd extension of $v$ by 
\begin{align*}
\tilde{v}(x',x_n)=
\begin{cases}
&v(x',x_n)\qquad\ \textrm{for}\ x_n\geq 0,\\
&-v(x',-x_n)\quad\textrm{for}\ x_n<0.
\end{cases}
\end{align*}
We show that $\tilde{v}$ satisfies (B.1). For an arbitrary $\phi\in C_{c}^{\infty}(\mathbb{R}^{n}\times [0,T))$, we set $\varphi(\cdot,x_n)=\phi(\cdot,x_n)-\phi(\cdot,-x_n)$ for $x_n\geq 0$. It follows that 
\begin{align*}
\int_{0}^{T}\int_{\mathbb{R}^{n}}\tilde{v}(\partial_t \phi+\Delta \phi)\dd x\dd t
&=\int_{0}^{T}\int_{\{x_n\geq0\}}v(x',x_n,t)\Big(\partial_t \phi(x',x_n,t)+\Delta \phi(x',x_n,t)\Big)\dd x\dd t\\
&-\int_{0}^{T}\int_{\{x_n<0\}}v(x',-x_n,t)\Big(\partial_t \phi(x',x_n,t)+\Delta \phi(x',x_n,t)\Big)\dd x\dd t\\
&=\int_{0}^{T}\int_{\mathbb{R}^{n}_{+}}v(\partial_t \varphi+\Delta \varphi)\dd x\dd t.
\end{align*}
Since $\varphi\in C^{\infty}_{c}(\overline{\mathbb{R}^{n}_{+}}\times [0,T))$ satisfies $\varphi=0$ on $\{x_n=0\}$, the right-hand side equals zero by (B.4). By Proposition B.1, $v\equiv 0$ follows. The proof is complete.
\end{proof}

\vspace{10pt}

\begin{bibdiv}
\begin{biblist}

\bib{A1}{book}{
 title={The Stokes semigroup on non-decaying spaces},
 author={K. Abe},
 publisher={The University of Tokyo},
 address={Ph.D. Thesis},
year={2013}
}

\bib{AarXive1}{article}{
 title={On estimates for the {S}tokes flow in a space of bounded functions},
 author={K. Abe},
year={2014}
note={arXiv:1406.5274}
}

\bib{A2}{article}{
 title={Some uniqueness result of the Stokes flow in a half space in a space of bounded functions},
 author={K. Abe},
 journal={Discrete Contin. Dyn. Syst. Ser. S},
volume={7}
 year={2014}
 pages={887--900}
}

\bib{A3}{article}{
 title={The Navier-Stokes equations in a space of bounded functions},
 author={K. Abe},
 journal={Comm. Math. Phys.},
volume={338}
 year={2015}
 pages={849--865}
}

\bib{A4}{article}{
title={Exterior Navier-Stokes flows for bounded data},
author={K. Abe},
note={Preprint. 2016}
}

\bib{AG1}{article}{
 title={Analyticity of the Stokes semigroup in spaces of bounded functions},
 author={K. Abe},
 author={Y. Giga},
 journal={Acta Math.},
 volume={211}
 year={2013}
 pages={1--46}
}
\bib{AG2}{article}{
 title={The $L^{\infty}$-Stokes semigroup in exterior domains},
 author={K. Abe},
 author={Y. Giga},
 journal={J. Evol. Equ.},
 volume={14},
 year={2014},
 pages={1--28}
}
\bib{AGH}{article}{
 title={Stokes resolvent estimates in spaces of bounded functions},
 author={K. Abe},
 author={Y. Giga},
 author={M. Hieber},
 journal={Ann. Sci. \'Ec. Norm. Sup\'er. },
 volume={48},
 year={2015},
 pages={537--559}
}

\bib{AGSS1}{article}{
 title={On the Stokes semigroup in some non-Helmholtz domains},
 author={K. Abe},
 author={Y. Giga},
 author={K. Schade},
 author={T. Suzuki},
 journal={Arch. Math.},
 volume={104},
  year={2015}
 pages={177--187}
}

\bib{AGSS2}{article}{
 title={On the Stokes resolvent estimates for cylindrical domains},
 author={K. Abe},
 author={Y. Giga},
 author={K. Schade},
 author={T. Suzuki},
note={Hokkaido University Preprint Series in Mathematics, 1081. Sapporo, 2015.}
}

\bib{ABHN}{book} {
author={W. Arendt},
author={Ch. Batty},
author={M. Hieber},
author={F. Neubrander},
title={Vector-valued Laplace Transforms and Cauchy Problems},
publisher={Birkh\"auser},
address={Basel}
year={2011}
}

\bib{BJ}{article}{
   author={H. -O. Bae},
   author={B. Jin},
   title={Existence of strong mild solution of the Navier-Stokes equations
   in the half space with nondecaying initial data},
   journal={J. Korean Math. Soc.},
   volume={49},
   date={2012},
   pages={113--138},
}

\bib{Chung}{article}{
   author={Chung, S.-Y},
   title={Uniqueness in the Cauchy problem for the heat equation},
   journal={Proc. Edinburgh Math. Soc.},
   volume={42},
   date={1999},
   pages={455--468},
}

\bib{ChungKim}{article}{
   author={Chung, S.-Y.},
   author={Kim, D.},
   title={Uniqueness for the Cauchy problem of the heat equation without uniform condition on time},
   journal={J. Korean Math. Soc.},
   volume={31},
   date={1994},
   pages={245--254},
 }

\bib{DHP}{article}{
 title={$L^{p}$-theory of the Stokes equation in a half space},
 author={W. Desch},
 author={M. Hieber},
 author={J. Pr\"{u}ss},
 journal={J. Evol. Equ.},
 volume={1},
 date={2001},
 pages={115--142}
}

\bib{E}{book}{
 title={Partial Differential Equations},
 author={L. C. Evans},
 date={2010},
 publisher={Amer. Math. Soc.},
 address={Providence, R. I.}
}

\bib{FKS1}{article}{
   author={R. Farwig},
   author={H. Kozono},
   author={H. Sohr},
   title={An $L^q$-approach to Stokes and Navier-Stokes equations in
   general domains},
   journal={Acta Math.},
   volume={195},
   date={2005},
   pages={21--53},
   }
\bib{FKS2}{article}{
   author={R. Farwig},
   author={H. Kozono},
   author={H. Sohr},
   title={On the Helmholtz decomposition in general unbounded domains},
   journal={Arch. Math. (Basel)},
   volume={88},
   date={2007},
   pages={239--248},
}
\bib{FKS3}{article}{
   author={R. Farwig},
   author={H. Kozono},
   author={H. Sohr},
   title={On the Stokes operator in general unbounded domains},
   journal={Hokkaido Math. J.},
   volume={38},
   date={2009},
   pages={111--136},
   }

\bib{Gal}{book}{
   author={G. P. Galdi},
   title={An introduction to the mathematical theory of the Navier-Stokes equations},
   series={Springer Monographs in Mathematics},
   edition={2},
   publisher={Springer},
   place={New York},
   date={2011},
 }

\bib{GMZ}{article}{
   author={G. P. Galdi},
   author={P. Maremonti},
   author={Y. Zhou},
   title={On the {N}avier-{S}tokes problem in exterior domains with non decaying initial data},
   journal={J. Math. Fluid Mech.},
   volume={14},
   date={2012},
   pages={633--652},
 }

\bib{GHHS}{article}{
   author={M. Geissert},
   author={H. Heck},
   author={M. Hieber},
   author={O. Sawada},
   title={Weak Neumann implies Stokes},
   journal={J. Reine Angew. Math.},
   volume={669},
   date={2012},
   pages={75--100},
}

\bib{G81}{article}{
  author={Y. Giga},
  title={Analyticity of the semigroup generated by the Stokes operator in $L_{r}$ spaces},
   journal={Math. Z.},
   volume={178},
   date={1981},
   pages={297--329},
}

\bib{GIM}{article}{
   author={Y. Giga},
   author={K. Inui},
   author={S. Matsui},
   title={On the Cauchy problem for the Navier-Stokes equations with
   nondecaying initial data},
   conference={
      title={Advances in fluid dynamics},
   },
   book={
      series={Quad. Mat.},
      volume={4},
      publisher={Dept. Math., Seconda Univ. Napoli, Caserta},
   },
   date={1999},
   pages={27--68},
}

\bib{GMSa}{article}{
 title={Global existence of smooth solutions for two dimensional {N}avier-{S}tokes equations with nondecaying initial velocity},
 author={Y. Giga},
 author={S. Matsui},
 author={O. Sawada},
 journal={J. Math. Fluid Mech.},
 volume={3},
 date={2001},
 pages={302-315}
}

\bib{GMS}{article}{
 title={On estimates in Hardy spaces for the Stokes flow in a half space},
 author={Y. Giga},
 author={S. Matsui},
 author={Y. Shimizu},
 journal={Math. Z.},
 volume={231},
 date={1999},
 pages={383--396}
}

\bib{GT}{book}{
title={Elliptic Partial Differential Equations of Second Order},
author={D. Gilbarg},
author={N. S. Trudinger},
publisher={Springer-Verlag, Berlin},
date={1983}
}

\bib{KLS}{article}{
   author={C. E. Kenig},
   author={F. Lin},
   author={Z. Shen},
   title={Homogenization of elliptic systems with Neumann boundary
   conditions},
   journal={J. Amer. Math. Soc.},
   volume={26},
   date={2013},
   pages={901--937},
}

\bib{LSU}{book}{
   author={O. A. Lady{\v{z}}enskaja},
   author={V. A. Solonnikov},
   author={N. N. Ural{\cprime}ceva},
   title={Linear and quasilinear equations of parabolic type},
   language={Russian},
   series={Translated from the Russian by S. Smith. Translations of
   Mathematical Monographs, Vol. 23},
   publisher={American Mathematical Society},
   place={Providence, R.I.},
   date={1968},
   }

\bib{LMp}{article}{
   author={J. -L. Lions},
   author={E. Magenes},
   title={Problemi ai limiti non omogenei. {V}},
   journal={Ann. Scuola Norm Sup. Pisa},
   volume={16},
   date={1962},
   pages={1--44},
  }

\if0
\bib{LM}{book}{
   author={J. -L. Lions},
   author={E. Magenes},
   title={Non-homogeneous boundary value problems and applications},
   publisher={Springer-Verlag},
   place={New York-Heidelberg},
   date={1972},
}		

\fi

\bib{L}{book}{
   author={A. Lunardi},
   title={Analytic semigroups and optimal regularity in parabolic problems},
   series={Progress in Nonlinear Differential Equations and their
   Applications, 16},
   publisher={Birkh\"auser Verlag},
   place={Basel},
   date={1995},
}

\bib{Mar2014}{article}{
   author={P. Maremonti},
   title={Non-decaying solutions to the {N}avier {S}tokes equations in exterior domains: from the weight function method to the well posedness in {$L^\infty$} and in {H}\"older continuous functional spaces},
   journal={Acta Appl. Math.},
   volume={132},
   date={2014},
   pages={411--426},
 }

\bib{Sa}{article}{
   author={J. Saal},
   title={The Stokes operator with Robin boundary conditions in solenoidal
   subspaces of $L^1(\Bbb R^n_+)$ and $L^\infty(\Bbb R^n_+)$},
   journal={Comm. Partial Differential Equations},
   volume={32},
   date={2007},
   pages={343--373},
 }

\bib{Sha}{article}{
   author={Shapiro, V. L.},
   title={The uniqueness of solutions of the heat equation in an infinite
   strip},
   journal={Trans. Amer. Math. Soc.},
   volume={125},
   date={1966},
   pages={326--361},
}

\bib{ShibaS}{article}{
   author={Y. Shibata},
   author={S. Shimizu},
   title={A decay property of the Fourier transform and its application to the Stokes problem},
   journal={J. Math. Fluid Mech.},
   volume={3},
   date={2001},
   pages={213--230},
}

\bib{SS}{article}{
   author={C. G. Simader},
   author={H. Sohr},
   title={A new approach to the Helmholtz decomposition and the Neumann
   problem in $L^q$-spaces for bounded and exterior domains},
   conference={
      title={Mathematical problems relating to the Navier-Stokes equation},
   },
   book={
      series={Ser. Adv. Math. Appl. Sci.},
      volume={11},
      publisher={World Sci. Publ., River Edge, NJ},
   },
   date={1992},
   pages={1--35}
}

\bib{Sohr}{book}{
   author={H. Sohr},
   title={The Navier-Stokes Equations},
      year={2001},
publisher={Birkh\"auser},
   place={Basel},
   date={2001},
}

\bib{Sl77}{article}{
   author={V.  A. Solonnikov},
   title={Estimates of the solution of a certain initial-boundary value
   problem for a linear nonstationary system of Navier-Stokes equations},
   journal={Zap. Nau\v cn. Sem. Leningrad. Otdel Mat. Inst. Steklov. (LOMI)},
   volume={59},
   date={1976},
   pages={178--254, 257},
}
\bib{Sl03}{article}{
   author={V. A. Solonnikov},
   title={On nonstationary Stokes problem and Navier-Stokes problem in a
   half-space with initial data nondecreasing at infinity},
   note={Function theory and applications},
   journal={J. Math. Sci. (N. Y.)},
   volume={114},
   date={2003},
   pages={1726--1740},
}

\bib{Sl07}{article}{
   author={V. A. Solonnikov},
   title={Schauder estimates for the evolutionary generalized Stokes
   problem},
   conference={
      title={Nonlinear equations and spectral theory},
   },
   book={
      series={Amer. Math. Soc. Transl. Ser. 2},
      volume={220},
      publisher={Amer. Math. Soc.},
      place={Providence, RI},
   },
   date={2007},
   pages={165--200},
}

\end{biblist}
\end{bibdiv}

\end{document}